\theoremstyle{plain}
\newtheorem{Thm}{Theorem}[section]
\newtheorem{Lem}[Thm]{Lemma}
\newtheorem*{LP}{Layout Process}
\newtheorem*{definition}{Definition}
\theoremstyle{definition}
\newcommand{\bbb}{\mathbb}      
\newcommand{\bC}{\bbb C}         
\newcommand{\bD}{\bbb D}         
\newcommand{\bI}{\bbb I}         
\newcommand{\bP}{\bbb P}         
\newcommand{\bQ}{\bbb Q}         
\newcommand{\bR}{\bbb R}         
\newcommand{\ccC}{\mathcal C}
\newcommand{\ccF}{\mathcal F}
\newcommand{\ccP}{\mathcal P}   
\newcommand{\ccV}{\mathcal V}
\newcommand{\ccU}{\mathcal U}
\newcommand{\cP}{P}             
\newcommand{\fp}{\mathfrak p}  
\newcommand{\fs}{\mathfrak s}   
\newcommand{\fC}{\mathfrak C}   
\newcommand{\fF}{\mathfrak F}   
\newcommand{\fS}{\mathfrak S}   
\newcommand{\fU}{\mathfrak U}   
\newcommand{\pssi}[3]{\vec{#1}_{#2,#3}}
\newcommand{\mF}{\mathcal F} 
\newcommand{\F}[1]{Figure~\ref{F:#1}}
\begin{document}

\title[Discrete Schwarzian]
{A Discrete Schwarzian
  Derivative\\ via Circle Packing}
\author{Kenneth Stephenson}
     \address{University of Tennessee, Knoxville} 
     \email{kstephe2@utk.edu}

     \keywords{circle packing, Schwarzian derivative,
       discrete analytic functions, M\"obius transformations}
     \subjclass{Primary: 30G25, 52C26; Secondary: 65E10}


\begin{abstract} There exists an extensive and fairly comprehensive
  discrete analytic function theory which is based on circle packing. This
  paper introduces a faithful discrete analogue of the classical
  Schwarzian derivative to this theory and develops its basic
  properties. The motivation comes from the current lack of circle
  packing algorithms in spherical geometry, and the discrete
  Schwarzian derivative may provide for new approaches. A companion
  localized notion called an intrinsic schwarzian is also investigated.
  The main concrete results of the paper are limited to circle packing
  flowers. A parameterization by intrinsic schwarzians is established,
  providing an essential packing criterion for flowers. The paper closes
  with the study of special classes of flowers that occur in the circle
  packing literature. As usual in circle packing, there are pleasant 
  surprises at nearly every turn, so those not interested in circle
  packing theory may still enjoy the new and elementary geometry seen in
  these flowers.
\end{abstract}

\maketitle Classical complex analysis, and conformal geometry in
general, have long benefited from a fundamental M\"obius invariant
known as the Schwarzian derivative. Recent decades have seen the
emergence of a comprehensive discrete analytic function theory and
associated discrete conformal geometry based on circle packing. This
discrete theory displays deep and intimate connections to conformal
geometry, so it is natural to ask if it, too, could benefit from such
an invariant. This paper establishes definitions for a
discrete Schwarzian derivative and verifies fundamental properties
that are largely faithful to the classical version. It also introduces
a local M\"obius invariant, an intrinsic schwarzian, and begins to
lay out how these invariants might provide important tools in
advancing the theory of circle packing.

M\"obius or projective invariance is exemplified by quantities which remain
unchanged after application of M\"obius transformations.  While the
Riemann sphere $\bP$ is the native habitat for M\"obius actions, it is
also far and away the most challenging for circle packing. Indeed,
with few exceptions, circle packings on $\bP$ have been merely
stereographic projections of packings developed in the euclidean or
hyperbolic setting. These spherical difficulties account for perhaps
the most glaring gap in discrete analytic function theory, namely, the
ability to create and manipulate discrete rational functions.

The circle packing community has exhausted most approaches to working
in spherical geometry, with precious little to show for it.  Perhaps
discrete Schwarzian derivatives can provide the fresh perspective
needed to move forward. The reader should not expect miracles,
however. Although we do establish robust definitions and basic
properties for a discrete Schwarzian derivative, taking our lead from
pioneering work by Gerald Orick, and although we take the opening
steps, there are no breakthrough theoretical tools here. On the other
hand, in the experimental world available via circle packing, the
discrete Schwarzian derivative and the associated intrinsic schwarzian
open wholly new vistas for investigation. Concrete results here deal
mostly with the fundamental unit within every circle packing, namely,
the circle packing ''flower''. As invariably happens in circle
packing, both beautiful visualizations and beautiful formulas pop up
around every corner. Whether or not the reader is involved in circle
packing theory, there is much to appreciate in the surprising and
pleasing elementary geometry we encounter. And we can always be alert
for that breakout tool.

\vspace{10pt} Here is a brief overview of the paper: We first provide
necessary (but brief) background on circle packing, on the associated
discrete analytic functions, on geometry and M\"obius transformations,
and on the central role experiments play in this topic. In Section~2
we review the classical Schwarzian derivative and define a discrete
version for mappings between circle packings. Moving beyond that
direct analog, we extract a local version, an {\sl intrinsic
  schwarzian} attached to individual packings. A principal goal --- a
distant goal --- is methods for recognizing, creating, and ultimately
manipulating (intrinsic) schwarzians for packings. These schwarzians
form edge labels analogous to the vertex (i.e., radius) labels which
dominate the theory, but which largely fail on the sphere. Section~3
illustrates the as-yet-unfulfilled potential for schwarzians as a
mechanism for laying out circle packings.  The struggle to work with
discrete meromorphic functions is our main motivation, but results
could also apply to circle packings on projective surfaces.

We switch in Section~\ref{S:flowers} to the paper's modest results
from our opening skirmishes with schwarzians; namely, describing the
schwarzians for flowers, the elemental circle packings.  An $n$-flower
consists of a central circle surrounded by a chain of $n$ tangent
``petal'' circles. A flower is {\sl un-branched} if the petals wrap
once around the center and {\sl branched} if they wrap 2 or more
times. It is {\sl univalent} if un-branched and the petals have
mutually disjoint interiors. Using a mechanical layout process and
computations detailed in the Appendix we work our way through the
early cases $n=3,4,5$, and 6 to general flowers. We reach
characterizations of un-branched (Theorem~\ref{T:unbranched}) and
univalent (Theorem~\ref{T:univalent}) flowers and criteria for
branching.

We conclude the paper with Section~\ref{S:sclass} by applying what we
have learned to several special classes of flowers. These cases will
contribute only marginally to the larger campaign, but they raise our
spirits with beautiful geometric, visual, and arithmetic
features. And although much remains to be done, in the author's view
the results for flowers alone are worth the effort.

\section{Background} \label{SS:background}

\vspace{10pt} \subsection{On Circle Packing} \label{SS:bcp} A {\sl
  circle packing} is a configuration of circles satisfying a
prescribed pattern of tangencies. Circle packings and their
connections to comformal geometry were introduced by William Thurston
in 1985, \cite{wT85}. Circle packings exist in great profusion in
euclidean, hyperbolic, and spherical geometry and more recently on
surfaces with affine and projective structures, \cite{SSW12},
\cite{KMT03}. The principal reference for this paper is \cite{kS05}.

The fundamental machinery is quite straightforward: The pattern of
tangencies for a circle packing $P$ is encoded in an abstract
(simplicial) complex $K$, a triangulation of a topological
surface. There is a circle $C_v\in P$ associated with each vertex $v$
of $K$ and each edge $\langle v,w\rangle$ of $K$ indicates a required
tangency between circles $C_v$ and $C_w$. Note that every ``circle''
is associated with a interior, forming a topological disc.  Two
circles are (externally) tangent if they intersect in a single point
and their interiors are mutually disjoint. Often the key data
associated with a packing is a radius label $R$, which contains a
radius $R(v)$ for the circle associate with vertex $v\in K$.

Some basic terminology will be useful in the sequel: The circles of a
packing $P$ occur in mutally tangent triples $\{C_v,C_w,C_u\}$.  The
geodesics connecting the three centers pass through the three tangency
points and form a geometric face. This is a geometric triangle
associated with the abstract {\sl face} $\{v,w,u\}$ of $K$.  The
surface formed by the geometric faces is called the {\sl carrier} of
$P$.  The packing $P$ is {\sl univalent} if its circles have mutually
disjoint interiors.

The packing $P$ can also be viewed as a collection of interconnected
flowers: a {\sl flower} consists of a {\sl central} circle $C_v$ and
the chain of successively tangent {\sl petal} circles, 
$\{C_{v_0},\cdots,C_{v_{n-1}}\}$, all tangent to $C_v$.  
A flower is {\sl closed} if
$C_{v_{n-1}}$ is tangent to $C_{v_0}$, in which case $v$ is an {\sl
  interior} vertex of $K$, whereas a flower is {\sl open} if and only
if $v$ is a {\sl boundary} vertex. (To avoid pathologies, we require
of $K$ that every boundary vertex has at least one interior neighbor.)
There are three classes of closed flowers: A {\sl univalent}
flower is one whose petals have mutually disjoint interiors. An {\sl
  un-branched} flower is one whose petals wrap once around the center, possibly
with overlaps between non-contiguous petals. Finally, a {\sl branched} flower is
one whose petals wrap more than once about the center and its {\sl degree} $d$
is number of times it wraps.

A circle packing $P$ is {\sl univalent} if its circles have mutually
disjoint interior. It is necessary (but not sufficient that the flowers
for interior vertices are univalent). If an interior circle $C_v$ has
a branched flower, then we say that $P$ has a branch point at $v$.

The surprising richness of the topic is seen in the foundational
existence and uniqueness result; namely, the Koebe-Andreev-Thurston
(KAT) Theorem, which states that for any triangulation $K$ of a
topological sphere, there exists an associated univalent circle
packing $\ccP_K$ of the Riemann sphere $\bP$, and that $\ccP_K$ is
unique up to M\"obius transformations (and inversions) of
$\bP$. Thurston also proposed in \cite{wT85} a clever algorithm for
actually computing such packings, allowing us today to treat circle
packing as a verb: to "circle pack" a complex $K$ is to create and
manipulate associated circle packings.

\vspace{10pt}
\subsection{On Discrete Analytic Functions} \label{SS:bdaf}
Intriguing as circle packings were in their own right, it was a
conjecture in Thurston's talk that really
fired up the topic. An example in \F{RMT} will set the stage.
Let $P$ be a univalent circle packing filling a simply connected
region $\Omega$ of the plane, as on the right in the figure, and let
$K$ be the underlying complex.  Thurston proved using KAT that there
exists a univalent circle packing for $K$ in the unit disc $\bD$ whose
boundary circles are all horocycles, as on the left in the
figure. With two circle packings for the same complex, one may define
$F:\ccP_K\longrightarrow P$ by identifying corresponding circles.
Essentially a mapping from the unit disc to $\Omega$, $F$ is roughly
analogous to the classical Riemann Mapping.

\begin{figure}[h]
\begin{center}
\begin{overpic}[width=.7\textwidth
  ]{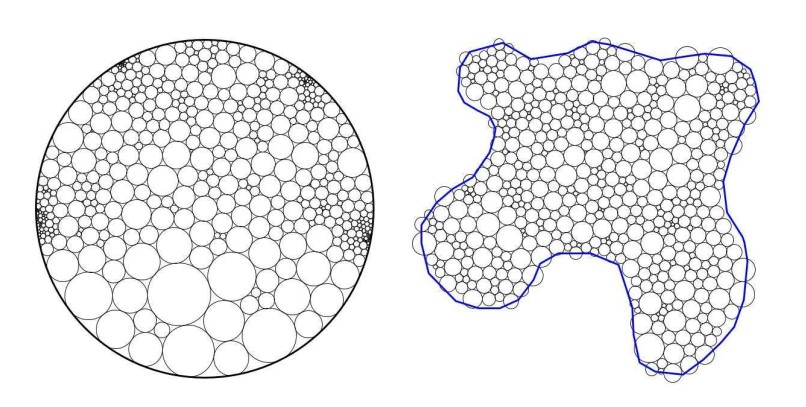}
  \put (36,45) {$\ccP_K$}
  \put (55,45) {$P$}
  \put(42,46.5){\vector(1.0,0){12}}
  \put (45,42) {$F$}
  \put (73,5) {$\Omega$}
  \put (5,5) {$\bD$}
\end{overpic}
\label{F:RMT}
\end{center}
\caption{Example of a discrete Riemann mapping}
\end{figure}

It is the conjecture Thurston made about such discrete conformal
mappings that kicked off the nearly 40 years of development efforts in
circle packing. He suggested that if one were to refine this
construction --- used circle packings $P$ with ever more and smaller
circles --- that the resulting circle packing maps $f$ would converge
uniformly on compact subset of $\bD$ to the classical Riemann Mapping
from $\bD$ onto $\Omega$. Shortly thereafter, this was proven by
B. Rodin and D. Sullivan \cite{RS87} in the case of hexagonal circle
packings. This result has subsequently be expanded to nearly full
generality by many authors; see \cite{kS05} for the story.

The packing $\ccP_K$ has come to be called the {\sl maximal} packing
for $K$ and the mapping $F:\ccP_K\longrightarrow P$ is known as a {\sl
  discrete analytic function}. However, the range of settings has
vastly expanded, so the existence and uniqueness of appropriate
maximal packings has been proven for finite, infinite, and
multiply-connected complexes $K$; discrete analogues are available for
nearly all analytic functions, from entire functions to universal
covering maps, and even branched functions; and the convergence of the
discrete maps to their classical counterparts are established in
nearly every circumstance. One can rightly think of this as
``quantum'' complex analysis --- a discrete theory which not only
mimics the classical, but also converges to it under refinement.

\vspace{10pt} Missing, however, in the pantheon of discrete analytic
functions is the potentially rich family of discrete meromorphic
functions. There is no mystery in the appropriate definition on the
sphere: If $K$ triangulates a sphere and $P$ is a circle packing for
$K$ on $\bP$, then the map $F:\ccP_K\longrightarrow P$
would be a {\sl discrete meromorphic function}. \F{merom}(a) is a non-trivial
example that we will return to in the sequel. Discrete meromorphic functions
can appear more generally as well: \F{merom}(b) represents a discrete
meromorphic function mapping a torus to the sphere.

\begin{figure}[h]
\begin{center}
\begin{overpic}[width=.8\textwidth
  ]{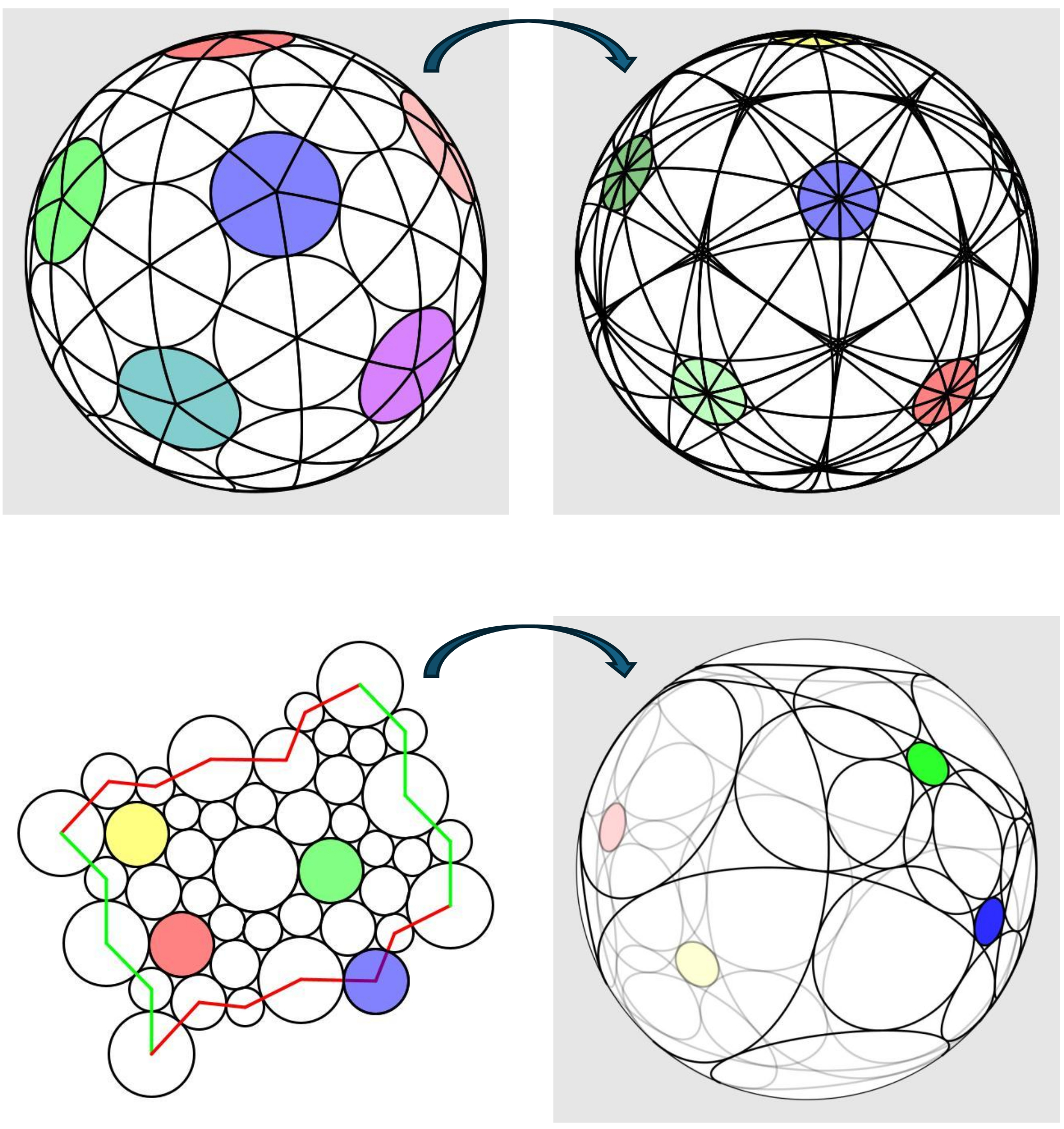}
  \put (2,94) {(a)}
  \put (2,42) {(b)}
  \put (33,96) {$\ccP_K$}
  \put (57,96) {$P$}
  \put (46,100) {$F$}
  \put (33,43) {$\ccP_K$}
  \put (57,43) {$P$}
  \put (46,46) {$F$}
\end{overpic}
\label{F:merom}
\end{center}
\caption{Examples of a discrete meromorphic functions}
\end{figure}

Both these examples owe their existence to
combinatorial symmetries. The first, a discrete analog of the
classical meromorphic function $z^3(3z^5-1)/(z^5+3)$, exploits
dodecahedral symmetry and the special geometry of {\sl Schwarz
  triangles} in $\bP$. There are 12 branched circles, each has 5 petal
circles wrapping twice around it; an isolated flower will be
shown later when we revisit this example. 

The sphere packing of \F{merom}(b), developed jointly with Edward
Crane, is a discrete version of a Weierstrass P-function, mapping a
torus to a 2-sheeted covering of $\bP$ with four simple branch points
(the colored circles).  There is a special symmetry built into its
complex $K$ and the choice of branch vertices, though we have yet to
understand fully why this symmetry ensures a coherent circle packing
in $\bP$.

Absent special symmetries, creating such non-univalent packings is out
of reach, with inherent difficulties in spherical geometry compounded
by the need for branch points.  Methods for constructing packings of
$\bP$, and more generally, packings on Riemann surfaces with
projective structures, are the principal motivation for this work.

\vspace{10pt}
\subsection{On M\"obius Transformations} \label{SS:mob}
Spherical geometry refers here to the geometry of the {\sl Riemann
  sphere} $\bP$, also known as the {\sl complex projective line}.  We
model $\cP$ on the unit sphere centered at the origin in $\bR^3$ and
endowed with a Riemannian metric of constant curvature 1.  The
M\"obius transformations are the members of Aut$(\bP)$, the group of
conformal automorphisms of $\bP$ under composition. These are
intimately connected with both spherical geometry and the geometry of
circles. Here are essential facts to note: $\bullet$ An orientation
preserving homeomorphism $M$ of the sphere maps circles to circles if
and only if $M$ is a M\"obius transformation. $\bullet$ In particular,
if $P$ is a circle packing in $\bP$, then $M(P)$ is a circle packing
in $\bP$. $\bullet$ If $\{C_1,C_2,C_3\}$ and $\{c_1,c_2,c_3\}$ are any
two triples of mutually tangent circles, then there exists a unique
M\"obius transformation $M$ so that $M(C_j)=c_j,\, j=1,2,3$.
$\bullet$ The conformal automorphisms of the unit disc $\bD$ and of
the complex plane $\bC$, Aut$(\bD)$ and Aut$(\bC)$, are subgroups of
Aut$(\bP)$.  $\bullet$ Aside from the identity $\bI$, M\"obius
transformations all have 1 or 2 fixed points and fall into one of
three categories, {\sl parabolic}, {\sl elliptic}, or {\sl
  hyperbolic}; the parabolic are those with a single fixed point.

It is routine to represent a M\"obius transformation $M$ in complex
arithmetric as a {\sl linear fractional transformation}
$M(z)=(az+b)/(cz+d)$, where $a,b,c,d$ are complex coefficients with
$ad-bc\not=0$. Computationally, we will work with these in the form of
$2\times 2$ complex matrices:
\begin{align} 
  &M(z)=(az+b)/(cz+d)\ \text{ is represented by } \notag \\
  &M=\begin{bmatrix}a & b\\ c & d\end{bmatrix},\
  \text{ with det}(M)=ad-bc\not=0. \notag
\end{align}
Composition of M\"obius transformations is represented by normal
matrix multiplication of their matrices, and the inverse of a
transformation is represented by the inverse of it matrix. The matrix
representation $M$ may be multiplied by any non-zero complex scalar,
so we will often normalize to ensure that $ad-bc=1$. Furthermore, if
$M$ is parabolic, we can ensure that trace$(M)=2$.

\vspace{10pt}
\subsection{On Software} \label{SS:cp}
Computations, visualizations, and experiments have been drivers of
circle packing since the topic's inception, principally due to the
algorithm Thurston introduced in his 1985 talk. The many refinements
of his algorithm now allow the computation of impressively large and
complicated complexes, some with millions of circles. See, for
example, the algorithms in \cite{CS03}, \cite{COS16},
\cite{COS17}. These capabilities and connections to conformal geometry
have in turn allowed significant applications of circle packing in
mathematics \cite{BS17}, in brain imaging \cite{mHS04}, physics
\cite{vH19}, engineering \cite{KS18}, not to mention art and
architecture.

It is especially important to note the key role that open-ended
experiments, visualizations, and serendipity play, even in the purely
theoretical aspects of circle packing. The topics in this paper are
just the latest examples.  Experiments require a laboratory, and for
the work here that laboratory is the open source Java software package
{\tt CirclePack}, available on {\sl Github} \cite{kS92}. All images
in this paper and the computations behind them due to {\tt CirclePack}.
Moreover, scripts are available from the author to repeat and extend
the experiments.

\vspace{20pt}
\section{Classical, Discrete, and Intrinsic} \label{S:class}
The Schwarzian derivative was discovered by Lagrange and named after
H. Schwarz by Cayley. It is a fundamental M\"obius invariant in
classical complex analysis, with important applications in topics from
function theory, differential equations, and Teichm\"uler theory,
among others. Suppose $\phi:\Omega\mapsto \Omega'$ is an analytic
function between domains $\Omega,\Omega'$ of the complex plane 
whose derivative $\phi'$ does not vanish.  The
{\bf\it Schwarzian derivative} $S_{\phi}$ is defined by
\begin{equation}
  S_{\phi}(z)=
  \dfrac{\phi'''(z)}{\phi'(z)}-\dfrac32(\dfrac{\phi''(z)}{\phi'(z)})^2.
\end{equation}
There is also a useful {\bf\it pre-Schwarzian derivative} $s_{\phi}$:
\begin{equation}
  s_{\phi}(z)=(\ln(\phi'(z)))'=\dfrac{\phi''(z)}{\phi'(z)}
  \ \ \ \Longrightarrow\ \ \ S_{\phi}(z)=
  s_{\phi}'(z)-\frac12(s_{\phi}(z))^2.
\end{equation}

\vspace{5pt}
\noindent The Schwarzian derivative is valuable because of its
intimate association with M\"obius transformations. By direct
computation, if $m(z)$ is a M\"obius transformation,
then $S_{m}\equiv 0$. The converse also holds: if $S_{\phi}\equiv 0$
in $\Omega$, then $\phi$ is a M\"obius transformation. In general
terms, then, {\sl the Schwarzian derivative of a function indicates
  how far that function differs from being M\"obius.} Reinforcing this
intuition is the fact that the Schwarzian derivative is invariant
under post-composition with M\"obius transformations:
$S_{m\circ\phi}\equiv S_{\phi}$. Moreover, for pre-composition, the
chain rule gives
\begin{equation} \label{E:chain}
  S_{\phi\circ m}(z)=S_{\phi}(m(z))\cdot (m'(z))^2.
\end{equation}

These features motivate development of our discrete Schwarzian
derivative. This began with work of Gerald Orick in his PhD thesis
\cite{gO10}.  He was searching for a discrete analogue of a classical
univalence criterion due to Nehari. Suppose $\phi$ is an analytic
function on the unit disc $\bD$. If it were M\"obius, then, of course,
it would be univalent (i.e., injective). Nehari proved that if $\phi$
is close enough to being M\"obius, in the sense $|S_{\phi}(z)|\le
2/(1-|z|^2)^2,\forall z\in \bD$, then $\phi$ is univalent.  Though the
search for a discrete version of Nehari's result continues, Orick laid
the groundwork for our notion of Schwarzian derivative, thereby
opening a rich vein of questions. (Discretized Schwarzian derivatives
have appeared via {\sl cross-ratios} for circle packings with regular
square grid or hexagonal combinatorics (see \cite{oS97} and
\cite{HS98}, respectively) and in circle pattern literature (see
\cite{wyL24} for example.)

\subsection{Patches} \label{SS:patch}
In concert with the notion of a patch in defining classical conformal
structures, a ``patch'' in a circle packing $P$ will refer to the four
circles forming a pair of contiguous faces. Our terminology will be
used in both combinatorial and geometric senses. Thus we will write
$\fp=\{v,w\,|\,a,b\}$ for the combinatorial patch formed by
faces $f=\{v,w,a\}$ and $g=\{w,v,b\}$ in the complex $K$. We might
also use the notation $\fp=\{f\,|\,g\}$. The common edge of
the faces is $e=\{v,w\}$, and by convention is positively oriented
with respect to the interior of $f$.

The circles of $P$ impose a geometry on $K$, and the corresponding
geometric patch in $P$ is $\fp=\{C_v,C_w\,|\,C_a,C_b\}$
forming faces $f$ and $g$ based on the triples $\{C_v,C_w,C_a\}$ and
$\{C_w,C_v,C_b\}$, respectively, and with common edge $e=\{C_v,C_w\}$.

Parallel to the classical setting we will also be working with a
discrete analytic function $F:P\mapsto P'$ mapping $P$ to a second
circle packing $P'$ sharing the complex $K$.  For the patch
$\fp=\{C_v,C_w\,|\,C_a,C_b\}$ of $P$ we have the corresponding
patch $\fp'=F(\fp)=\{C_v',C_w'\,|\,C_a',C_b'\}$ of
$P'$, and corresponding geodesic triangles $f',g'$ with shared edge
$e'$ of $P'$.

The {\it discrete Schwarzian derivative} of $F$, denoted $\Sigma_F$, will
be a complex function defined on the collection of interior edges
$e=\{v,w\}$ of the domain packing $P$. More concretely, the value
$\Sigma_F(e)$ will be associated with the tangency point $t_e$ of $C_v$ and
$C_w$.

Fix attention on a combinatorial patch $\fp=\{v,w\,|\,a,b\}$
in $K$ with faces $f,g$ in $P$ and $f',g'$ in $P'$ and directed edge
$e$. There exist M\"obius transformations $m_f$ and $m_g$ identifying
corresponding faces. We write
\begin{equation}\label{E:mfmg}
  m_f(f)=f'\quad\text{ and }\quad m_g(g)=g'.
  \end{equation}
A brief note about these equalities: There is a unique M\"obius
transformation taking the tangent triple $\{C_v,C_w,C_a\}$ to the
corresponding triple $\{C_v',C_w',C_a'\}$; in practice, it is found by
mapping the three tangency points of one to the corresponding tangency
points of the other. In hyperbolic and euclidean settings, these
M\"obius maps are homeomorphisms of the geodesic triangles formed by
the centers of the triples.  In spherical geometry, however, M\"obius
transformations do not necessarily preserve geodesics and circle
centers, so the equalities of (\ref{E:mfmg}) are symbolic rather that
literal.

\begin{figure}[h]
\begin{center}
\begin{overpic}[width=.8\textwidth
  ]{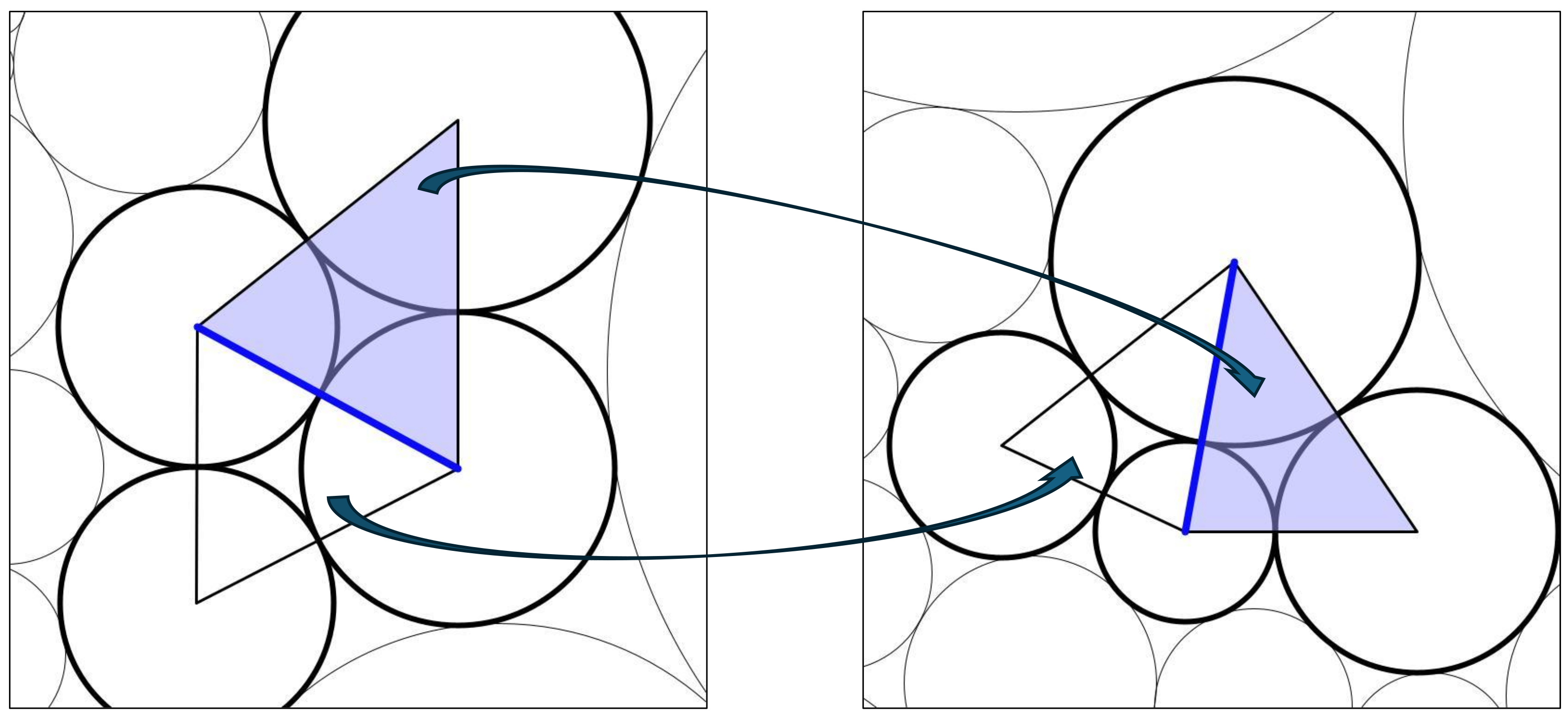}
  \put (23.5,30) {$f$}
  \put (81,19) {$f'$}
  \put (20.5,16.5) {$g$}
  \put (67.5,18) {$g'$}
  \put (48,8) {$m_g$}
  \put (48,34) {$m_f$}
  \put (5,40) {$P$}
  \put (94,40) {$P'$}
  \put (9,27) {$C_v$}
  \put (30.5,14) {$C_w$}
  \put (78,31.5) {$C'_v$}
  \put (74,8) {$C'_w$}
\end{overpic}
\caption{The discrete Schwarzian derivative for an edge}
\label{F:sd_def}
\end{center}
\end{figure}

\vspace{10pt} With $m_f$ and $m_g$ we may now define $M_F(e)$ as the
M\"obius transformation
\begin{equation}M_F(e)=m_g^{-1}\circ m_f.
\end{equation}
Though we have adjusted notation slightly, $M_F(e)$ is the {\it
  (directed M\"obius) edge derivative} of Orick.  The maps $M_F(e)$
have very particular forms observed by Orick. In particular, the
definition of $M_F(e)$ shows that it fixes $t_e$, the tangency point
of $C_v$ and $C_w$, and that it fixes $C_v$ and $C_w$ themselves as
points sets. As a result, if $M_F(e)$ is not the identity then it is
necessarily a parabolic M\"obius transformation.  We are free to
normalize so that trace$(M_F(e))=2$ and det$(M_F(e))=1$.  In this
case,
\begin{equation} \label{E:MF}
  M_F(e)=\bI + \sigma\cdot 
\begin{bmatrix}t_e & -t_e^2\\
  1 & -t_e
\end{bmatrix}.
\end{equation}
Moreover, if $\eta=e^{i\theta}$ is the common tangent to $C_v$ and
$C_w$ at $t_e$ and is pointing outward from face $f$, then $\sigma$ is
a real multiple of its complex conjugate, $\overline{\eta}$.

\begin{definition} \label{D:dsd}
  Let $F:P\longrightarrow P'$ be a discrete analytic function. For each
  interior edge $e$ of $P$, the value $\sigma$ arising in the
  computation of $M_F(e)$ as described above is defined as the {\bf\it
    (discrete) Schwarzian derivative} of $F$ on the edge $e$ and we
  write $\sigma=\Sigma_F(e)$.
\end{definition}

There are several properties to observe here:

\vspace{5pt}
\begin{enumerate}
\item[$\bullet$]{Suppose $-e$ denotes the edge $e$ but oppositely
  oriented. Then $M_F(-e)=(M_F(e))^{-1}$, implying
  $\Sigma_F(-e)=-\Sigma_F(e)$.}

\vspace{5pt}
\item[$\bullet$]{$\Sigma_F(e)=0$ if and only if $M_F(e)$ is the
  identity.}

\vspace{5pt}
\item[$\bullet$]{If $F$ itself were M\"obius, then $m_f\equiv
  m_g\equiv F$ and $M_F(e)=\bI$ for every interior edge
  $e$. Conversely, if $M_F(e)=\bI$ for every interior edge $e$, then a
  simple face-to-face continuation argument would show that $F$ itself
  is M\"obius, with $m_f\equiv F$ for every face $f$.}

\vspace{5pt}
\item[$\bullet$]{Suppose we follow $F$ by a M\"obius transformation
  $m$, say $G\equiv m\circ F: P\rightarrow P''=m(P')$. The new face
  M\"obius maps for $\fp$ are $\widehat{m}_f:f\rightarrow
  m(f')=f''$ and $\widehat{m}_g:g\rightarrow m(g')=g''$. Since
  $\widehat{m}_f=m\circ m_f$ and $\widehat{m}_g=m\circ m_g$, note that
  \begin{equation} \notag\label{E:MG}
    M_G(e)=\widehat{m}_g^{-1}\circ \widehat{m}_f= m_g^{-1}\circ m^{-1}\circ
    m\circ m_f=M_F(e).
  \end{equation}
  That is, the operator $M$ and consequently $\Sigma_F$
  are M\"obius invariant.}

\vspace{5pt}
\item[$\bullet$]{Computations in the Appendix show that the discrete
  chain rule under pre-composition by a M\"obius transoformation
  diverges slightly from the classical rule of (\ref{E:chain}); see
  (\ref{E:SSigma}).}

\item[$\bullet$]{It is very likely that if a sequence $\{F_n\}$ of
  discrete analytic functions converges on compacta to a classical
  analytic function $\phi$, then the sequence $\{\Sigma_{F_n}\}$ also
  converges on compacta to $S_{\phi}$. Results of Z-X. He and Oded
  Schramm in \cite{HS98} can be used to confirm this for packings with
  hexagonal combinatorics, but it remains open for more general circle
  packings.}
  \end{enumerate}

\subsection{Intrinsic Schwarzians} \label{SS:intrinsic}
The Schwarzian derivative is associated with mappings {\sl between} circle
packings. However, we can exploit the same notion in a
local sense to provide an ``intrinsic schwarzian'' for each interior
edge of an individual packing. For this we need only consider a
target patch $\fp$ (as occurs within a packing $P$, for instance) and a
standard {\bf\it base patch} $\fp_{\Delta}$ which we describe
next.

The base patch $\fp_{\Delta}$ consists of two contiguous equilateral
triangles, $f_{\Delta}$ and $g_{\Delta}$. Here $f_{\Delta}$ is formed
by the tangent triple of circles of radius $\sqrt{3}$, symmetric about
the origin, and having distinguished edge $e_{\Delta}$ running
vertically through $z=1$.  Note that the incircle of $f_{\Delta}$
intersects its edges at their points of tangency and these are the 3rd
roots of unity; thus the tangency point for $e_{\Delta}$ is $t_e=1$
and the outward unit vector is $\eta=1$. The face $g_{\Delta}$ is an
equilateral triangle contiguous along $e_{\Delta}$, so it shares the
two circles of $e_{\Delta}$ and its third circle is centered at $x=4$.

A target patch $\fp$ is formed by two triples of circles
sharing a pair of circles and has triangular faces we will denote by
$f$ and $g$. We do our computation for edge $e_{\Delta}$ as usual, as
though for the mapping $F:\fp_{\Delta}\rightarrow
\fp$.  That is, we compute the face M\"obius transformations
$m_f,m_g$, so $m_f(f_{\Delta})=f$ and $m_g(g_{\Delta})=g$, and then
M\"obius $M(e_{\Delta})=m_g^{-1}\circ m_f$.  Referring to
(\ref{E:MF}), note that $\eta=1$, so $\sigma$ is some real value $s$,
and $t_e=1$, implying
\begin{equation} \label{E:mf}
  M(e_{\Delta})=\bI+s\cdot
  \begin{bmatrix}1 & -1\\
    1 & -1
  \end{bmatrix}=
  \begin{bmatrix} 1+s & -s\\
    s & 1-s
  \end{bmatrix},
\end{equation}
This matrix is important in the following, so we use the notation
$M_s$. 

\begin{definition} \label{D:intsch}
Given a geometric patch $\fp$, the $(1,2)$ entry of the
M\"obius transformation $M_s$ described in (\ref{E:mf}) is a real
value $s$ and is defined as the {\bf\it (intrinsic) schwarzian} for the
shared edge $e$ of $\fp=\{f\,|\,g\}$.
\end{definition}

The intrinsic schwarzian $s$ completely characterizes the target patch
$\fp$ up to M\"obius transformations. In particular, it is unchanged
if we interchange the labels $f$ and $g$; it is unchanged if $\fp$ is
replaced by $m(\fp)$ for a M\"obius transformation $m$; and if $\fp_1$
and $\fp_2$ are two geometric patches with identical intrinsic
schwarzians, then there exists a M\"obius transformation $m$ so that
$\fp_1=m(\fp_2)$.

Computations in the Appendix establish the connection between
Schwarzian derivatives and intrinsic schwarzians.  Given
$F:P\longrightarrow P'$, consider a patch $\fp\subset P$, its image
patch $\fp'\subset P'$, their edges $e,e'$, respectively, and the
Schwarzian derivative $\sigma=\Sigma_F(e)$. Let $s$ and $s'$ denote
the intrinsic schwarzians for $e$ and $e'$, respectively.  Let $m$ be
the M\"obius transformation of the base face $f_{\Delta}$ onto the
face $f$ of $\fp$. Computations in the Appendix show
\begin{equation} \label{E:ssigs}
  s'=s+\Sigma_F(e)\cdot m'(1)=s+\dfrac{\Sigma_F(e)}{(c+d)^2},
  \text{ where }
  m(z)=\dfrac{az+b}{cz+d},\ ad-bc=1.
\end{equation}
As a side note, observe that $\Sigma_F(e)\cdot m'(1)$ is real.

\vspace{10pt} We finish this subsection by illustrating schwarzians in
relation to the base patch $\fp_{\Delta}$. This not only lets the
reader gain some intuition, but also leads us to the computational
machinery central to the
remainder of the paper.

\vspace{15pt}
\begin{figure}[h]
\begin{center}
\begin{overpic}[width=.96\textwidth
  ]{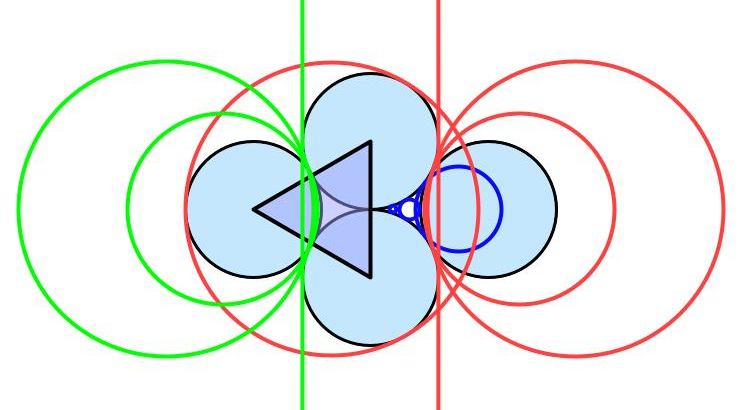}
\put (50.8,28.75) {1}
\put (44,12) {$C_v$}
\put (44,39) {$C_w$}
\put (26.7,26) {$C_a$}
\put (70,26) {$C_b$}
\put (46,3) {$s=-.6$}
\put (49,4.65) {\vector(.221,.8){5.6}}
\put (70,3) {$s=0.0$}
\put (73,4.65) {\vector(-0.22,.8){3.6}}
\put (88,3) {$s=0.1$}
\put (91,4.65) {\vector(-1,2){2.8}}
\put (20,3) {$s=1.0$}
\put (23,4.65) {\vector(1,2){6.8}}
\put (3,3) {$s=.95$}
\put (5.6,4.65) {\vector(1,2.75){2.75}}
\put (43,52.5) {$s=1-\frac{1}{\sqrt{3}}$}
\put (46,52) {\vector(0.1,-0.9){.55}}
\put (68,50) {$s=(1-\frac{1}{\sqrt{3}})/2$}
\put (67,50.25) {\vector(-1.0,-0.2){7}}
\put (10,50) {$s=(1+\frac{1}{\sqrt{3}})/2$}
\put (29.5,50.2) {\vector(1.0,-0.18){10}}
\put (44.75,30) {$f_{\Delta}$}
\end{overpic}
\end{center}
\caption{Sampling some intrinsic schwarzians}
\label{F:base}
\end{figure}

Our base patch $\fp_{\Delta}$ appears in \F{base} as the four light blue
discs, with $C_b$ the one on the right.
Replacing $C_b$ with some new circle $\widehat{C}$ which is tangent
to $C_v$ and $C_w$, forms a new patch. Each new patch leads to some
schwarzian $s$ for the edge $e$, so we will denote that new circle
by $\widehat{C}_s$. Indeed,
by our work above, we have $\widehat{C}_s=M_s^{-1}(C_b)$.

To explore the various situations, consider these 5 specific values of $s$:
\begin{equation}\notag
  s_0=0\ <\ s_1=\dfrac{1-1/\sqrt{3}}{2}\ <\ 
  s_2=1-1/\sqrt{3}\ <\ s_4=\dfrac{1+1/\sqrt{3}}{2}\ <\ s_5=1
\end{equation}

When $s=s_0=0$, $\widehat{C}_s$ is identical to $C_b$. As $s$
decreases through negative values, the corresponding circles $\widehat{C}_s$ get
smaller as they drop into the right crevasse between $C_v$ and $C_w$, as
shown with some blue examples in \F{base}. As $s$ increases from $s_0$
to $s_2$, the spherical radius of $\widehat{C}_s$ increases as shown
by the red circles. Along the way, when $s=s_1$ then $\widehat{C}_s$
is the line through $\infty$, the vertical line tangent to $C_v$ on
$C_w$ on their right.  Reaching $s=s_2$, the circle $\widehat{C}_s$ is
suddenly tangent to all three of the circles forming $f_{\Delta}$, but
with $\infty$ in its interior. For $s$ larger than $s_2$, the
spherical radius of $\widehat{C}_s$ is decreasing, as shown with green
examples. The $\widehat{C}_s$ now overlap $C_a$, and on reaching
$s_4$, $\widehat{C}_s$ is the vertical line tangent to $C_v$ and $C_w$
on their left. As $s$ grows beyond $s_4$, the circles are moving more
deeply into the left crevasse between $C_v$ and $C_w$.  On reaching
$s=s_5$, $\widehat{C}_s$ is identical to $C_a$. This is a critical
juncture: the interstices for faces $f$ and $g$ are now reflections of
one another through the unit circle. If we let $s$ exceed $s_5=1$, then
these interstices overlap, a condition we will exclude in later work
on branched flowers.

\vspace{10pt} Now move to consideration of a generic patch
$\fp=\{v,w\,|\,a,b\}$. Suppose the centers and radii for
circles $\{c_v,c_w,c_a\}$ forming $f$ and the intrinsic
schwarzian for the edge $e=\{c_v,c_w\}$ are known. Then one can compute
the unknown circle $c_b$, and consequently fix the face
$g=\{c_w,c_v,c_b\}$. Here are the details:

There exists a M\"obius transformation $m_f$ mapping $f_{\Delta}$ to
$f$. Therefore, $m_f^{-1}(f)=f_{\Delta}$, and so the patch
$\widehat{\fp}=m_f^{-1}(\fp)$ will be analogous to those depicted in
\F{base}. Because schwarzians are invariant under M\"obius
transformations, the schwarzian for $\widehat{\fp}$ will again be $s$,
meaning that its circle $m_f^{-1}(c_b)$ must be $\widehat{C}_s$. Noting
that $\widehat{C}_s=M_s(C_b)$, the unknown circle $c_b$ of
$\fp$ is given by
\begin{equation} \label{E:cb}
  c_b=(m_f\circ M_s^{-1})(C_b)\ \ \text{ where }\ \ M_s^{-1}=
  \begin{bmatrix}
    1-s & s\\
    -s & 1+s
  \end{bmatrix},
\end{equation}
a fact we will use extensively in the sequel.

\section{Packing Layouts} \label{S:layout}
Construction of a circle packing for a given complex $K$ typically
starts (as Thurston did) with the computation of a {\sl packing label}
$R=\{R(v):v\in K\}$ containing the circle radii. Then comes the {\sl
  layout} process, the computation of the circle centers. This process
utilizes a spanning tree $T$ chosen from the dual graph of $K$. Any
face $f_0$ of $T$ may be designated as the root. Using the radii of
its three vertices, one can lay out a tangent triple of circles
forming the geometric face $f_0$. For each dual edge $\{f,g\}\in T$,
if face $f$ is in place, then two of its circles are shared with $g$,
and the radius of the remaining circle of $g$ is enough to compute its
unique position.  Thus, starting with the geometric root face $f_0$,
one can proceed through $T$ to lay out all remaining circles,
resulting in the final packing $P$.

This process could instead be carried out using schwarzians, if they
were available.  Write $S=\{s(e): e\ \text{interior edge}\}$ for the
(intrinsic) schwarzians of interior edges for some packing
$P$. Starting with {\sl any} (!) tangent triple of circles and
identifying it as the base face $f_0$, one can again progress through
the edges $\{f,g\}$ of the dual spanning tree $T$. If the face $f$ is
in place, then using a patch $\fp=\{f\,|\,g\}$ and the schwarzian
$S(e)$ for its shared edge $e$, one can apply (\ref{E:cb}) to
determine the radius and center of the third circle of $g$.
Progressing thus through $T$ yields a final packing $P$ for $K$.
Since the whole of $P$ is determined by the initial geometric face
$f_0$;, we can obtain any M\"obius image $m(P)$ by starting the layout
with the appropriate base face.

There are some issues to address: Using the traditional layout
approach {\sl via} radii, the geometry of $P$ must be that of the
given label $R$. The layout approach {\sl via} intrinsic schwarzians,
on the other hand, is by its very nature carried out on the
sphere. Indeed, whether the final packing $P$ lives in the plane or
the hyperbolic plane might well be dictated by the choice of the
initial face $f_0$.  Perhaps this is the advantage of using
schwarzians: one can lay out packings on the sphere or, more
generally, on surfaces with projective structures.

I would also point out that when $K$ is not simply connected, the
layout process, whether with radii or schwarzians, is more subtle;
laying out a closed chain of faces which is not null homotopic can
lead to non-trivial holonomies, meaning the data is not associated
with a circle packing. Let us therefore stick to simply connected
complexes $K$ for now.

\subsection{The Difficulty} \label{SS:dculty}
The difficulty in the schwarzian approach lies not with layout, but
rather with computation of the data itself. In introducing circle
packing to the world, Thurston also graced us with an iterative
algorithm for computing radius data. With radii in hand, one can
easily lay out the circles to form $P$. However, his algorithm is
restricted to the euclidean and hyperbolic settings, and despite
considerable effort, there is no known algorithm in spherical
geometry. There are two key ingredients in Thurston's
clever algorithm:

\vspace{10pt}
\noindent{\narrower{
    
\noindent$\bullet$\ {\bf Criteria:} Given a label $R$ of putative
radii, one can directly compute the set $\{\theta_R(v): v\in
K\}$ of associated {\sl angle sums} at the vertices of
$K$. $R$ is a packing label if and only if $\theta_R(v)$ is an integer
multiple of $2\pi$ for every interior $v$.

\vspace{10pt}
\noindent$\bullet$\ {\bf Monotonicities:} There are simple
monotonicities in the effects that adjusments in radius labels have on
associated angle sums. In particular, a packing label is the zero set
of a convex functional, guaranteeing existence and uniqueness (and
computability) of solutions.

}}

\vspace{10pt} It is the monotonicity that fails us in the spherical
setting. Building new computational capabilities is the main
motivation for looking at schwarzians.  That is, we want to replace
the data provided by a vertex label $R=\{R(v):v\in K\}$ with
that of an intrinsic schwarzian {\sl edge} label $S=\{S(e):e\in
K\}$.

\begin{definition} \label{D:Slabel}
  Let $K$ be a simply connected complex and let $S$ be an edge label,
  that is, a set of real numbers, one for each interior edge of $K$.
  We call $S$ a {\bf\it packing (edge) label} if there exists a circle
  packing $P$ on the Riemann sphere whose intrinsic schwarzians are
  given by $S$.
\end{definition}

The main question: {\sl What are the packing labels?} Based on
experience with radius labels, and in particular on results in
\cite{BSW11}, we anticipate that the packing labels will form a
$(p-3)$-dimensional differentiable subvariety $\fS\subset\bR^k$, where
$p$ and $k$ are the numbers of interior vertices and interior edges of
$K$, respectively. Describing $\fS$ and more importantly, computing
specific packing labels, appears to be very challenging.  Our modest
approach has been to set up mechanisms for experimentation and
discovery. Observations from experiments in {\tt CirclePack} have led
to the clunky but serviceable Theorems \ref{T:rational} and
\ref{T:univalent} below on packing labels for flowers. As for
constructing packing labels for whole complexes, I am less sanguine.
Even working with radii data, monotonicity may fail in our spherical
setting, and without monotonicity, methods for generating and
manipulating packing edge labels will require major new insights and
numerical machinery. Edward Crane has built an explicit example of a
complex $K$ triangulating the sphere with a designated set of its
vertices as branch points which has two M\"obius inequivalent
realizations as circle packings of $\bP$. Non-uniqueness is a sobering
feature when looking for an algorithm. Nonetheless, let's do what we
can and begin by looking at individual flowers.

\section{Flowers} \label{S:flowers}

The search for general packing critera naturally begins with the
study of packing labels for individual flowers. One can easily
generate randomized $n$-flowers for any $n$, and
thereby obtain a wealth of associate schwarzian labels.
Our work, however, lies in the reverse direction: given a label
$\{s_0,s_2,\cdots,s_{n-1}\}$, how can
one tell if it is a packing label? 

Here we develop and exploit a general process for laying out flowers
in a normalized setting.  This has been implemented in {\tt
  CirclePack}, and our investigations have relied on the flexible
nature of its computations and visualizations. Our interest lies with
closed flowers and after preliminaries we work in successive subsections
on un-branched flowers, univalent flowers, and finally on branched flowers.

\subsection{Notation and Preliminaries}
In a tangency circle packing, the {\sl flower} of the
circle $C=C_v$ for an interior vertex is denoted
$\{C;c_0,c_1,\cdots,c_{n-1}\}$, where $c_0,\cdots,c_{n-1}$ is the chain of
{\sl petals} which wrap around $C$ with the last tangent to the
first. The ordered chain of interior edges emanating from $v$ may be
written as $\{e_0,e_1,\cdots,e_{n-1}\}$, where $e_j$ is the edge
$\{C,c_j\}$ and hence is the shared edge in the patch
$\fp=\{c_j,C\,|\,c_{j-1},c_{j+1}\}$. We write $\{s_0,s_1,\cdots,s_{n-1}\}$
for the corresponding intrinsic schwarzians, a packing label for this
flower. (Note that the indexing for $n$-flowers is always modulo $n$.)

Putting the first triple of circles, $\{C,c_{n-1},c_0\}$, in place, one
can then use schwarzians $s_0,s_1,\cdots,s_{n-3}$ in succession to
place $c_1,c_2,\cdots,c_{n-2}$ and possibly complete the geometric
flower. But is this a flower? Some conditions must be needed since
this procedure did not even utilize the given schwarzians $s_{n-2}$ or
$s_{n-1}$.  To be a packing label, the layout must avoid three potential
incompatibilities:

\vspace{10pt}
\noindent{\narrower {
    
    \noindent{\bf (a)} The flower may fail to close; that is,
    $c_{n-1}$ may fail to be tangent to $c_0$.

    \vspace{5pt}
    \noindent{\bf (b)} The patch
    $\fp_{n-1}=\{c_{n-1},C\,|\,c_{n-2},c_0\}$ may fail to
    have schwarzian $s_{n-1}$.

    \vspace{5pt}
    \noindent{\bf (c)} The patch $\fp_0=\{c_0,C\,|\,c_{n-1},c_1\}$
    may fail to have schwarzian $s_0$.

}}

\vspace{10pt} Since flowers and their schwarzians are unchanged under
M\"obius transformations, one can choose any convenient
normalization. We have chosen that illustrated in \F{f7} (in this
instance, a 7-flower). This approach sidesteps the numerical
difficulties working with infinity.  The notations of the figure are
those we will use throughout the paper: the upper half plane
respresents the central circle $C$, the half plane $\{z=x+iy:y\le
-2\}$ represents $c_0$ (so the tangency between $C$ and $c_0$ lies at
infinity), and the petal $c_1$ of radius 1 is tangent to $C$ at
$t_1=0$. The successive petals $\{c_2,\cdots,c_{n-1}\}$ have tangency
points $\{t_2,\cdots,t_{n-1}\}$. The successive {\sl euclidean radii} will
be denoted by $\{r_2\cdots,r_{n-1}\}$, and successive {\sl
  displacements} by $\delta_j= t_{j+1}-t_j$.

\begin{figure}[h] 
\begin{center}
\begin{overpic}[width=.9\textwidth
  ]{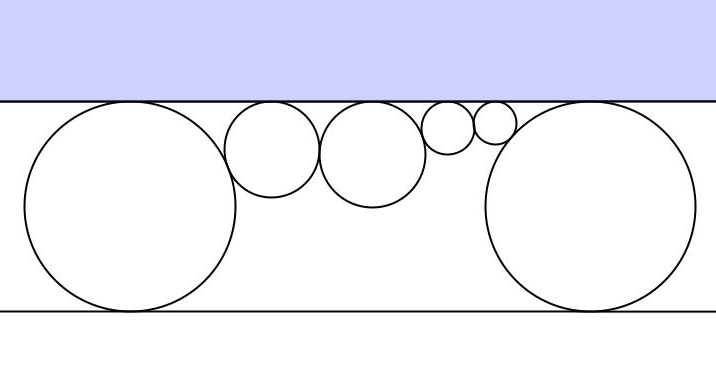}
  \put (56,46) {$\delta_3$}
  \put (52.1,45) {\vector(1,0){10.2}}
\put (6,48) {$C$}
\put (6,4) {$c_0$}
\put (15,6.3) {$-2\,i$}
\put (11.5,15.5) {$c_1$}
\put (34.5,29.5) {$c_2$}
\put (77,15.5) {$c_6$}
\put (40,33) {$r_2$}
\put (38,33.5) {\vector(1,-1){5}}
\put (85.5,21) {$r_6:=1$}
\put (82,26) {\vector(1,-1.75){7.8}}
\put (13,42) {$t_1=0$}
\put (36,42) {$t_2$}
\put (48.6,42) {$t_3$}
\put (51.8,40) {\line(0,1){9}}
\put (52,40) {\line(0,1){9}}
\put (52.1,40) {\line(0,1){9}}
\put (63.2,42) {$t_4$}
\put (62.4,40) {\line(0,1){9}}
\put (62.5,40) {\line(0,1){9}}
\put (62.6,40) {\line(0,1){9}}
\put (70,42) {$t_5$}
\put (81,42) {$t_6$}
\put (17.5,9.5) {$\bullet$}
\put (17.5,39) {$\bullet$}
\put (37.2,39) {$\bullet$}
\put (51.3,39) {$\bullet$}
\put (61.8,39) {$\bullet$}
\put (68.4,39) {$\bullet$}
\put (81.9,39) {$\bullet$}
\end{overpic}
\end{center}
\caption{Notations for normalized flower layouts}
\label{F:f7}
\end{figure}

\F{four} provides a sampler of normalized flowers. In \F{four}(a) the
non-contiguous petals $c_3$ and $c_0$ overlap. \F{four}(b) illustrates
an ``extraneous tangency'', as petals $c_{j-1}$ and $c_{j+1}$ are
tangent, even though they are not neighbors in the flower
structure. \F{four}(c) illustrates a flower whose seven petals reach
twice about $C$; necessarily, some of them overlapping one
another. Note that \F{f7} and \F{four}(d) are univalent flowers,
\F{four}(a) is non-univalent, but is un-branched, while \F{four}(c) is
branched.  \F{four}(d) illustrates an extremal situation among
univalent flowers: petals $c_2,\cdots,c_{5}$ all have extraneous
tangencies with $c_0$, yet the petals' interiors are
mutually disjoint.  This illustrates the configuration among
normalized univalent $n$-flowers with the greatest distance between
the end petals $c_1$ and $c_{n-1}$.

\begin{figure}[h]
\begin{center}
\begin{overpic}[width=.95\textwidth
  ]{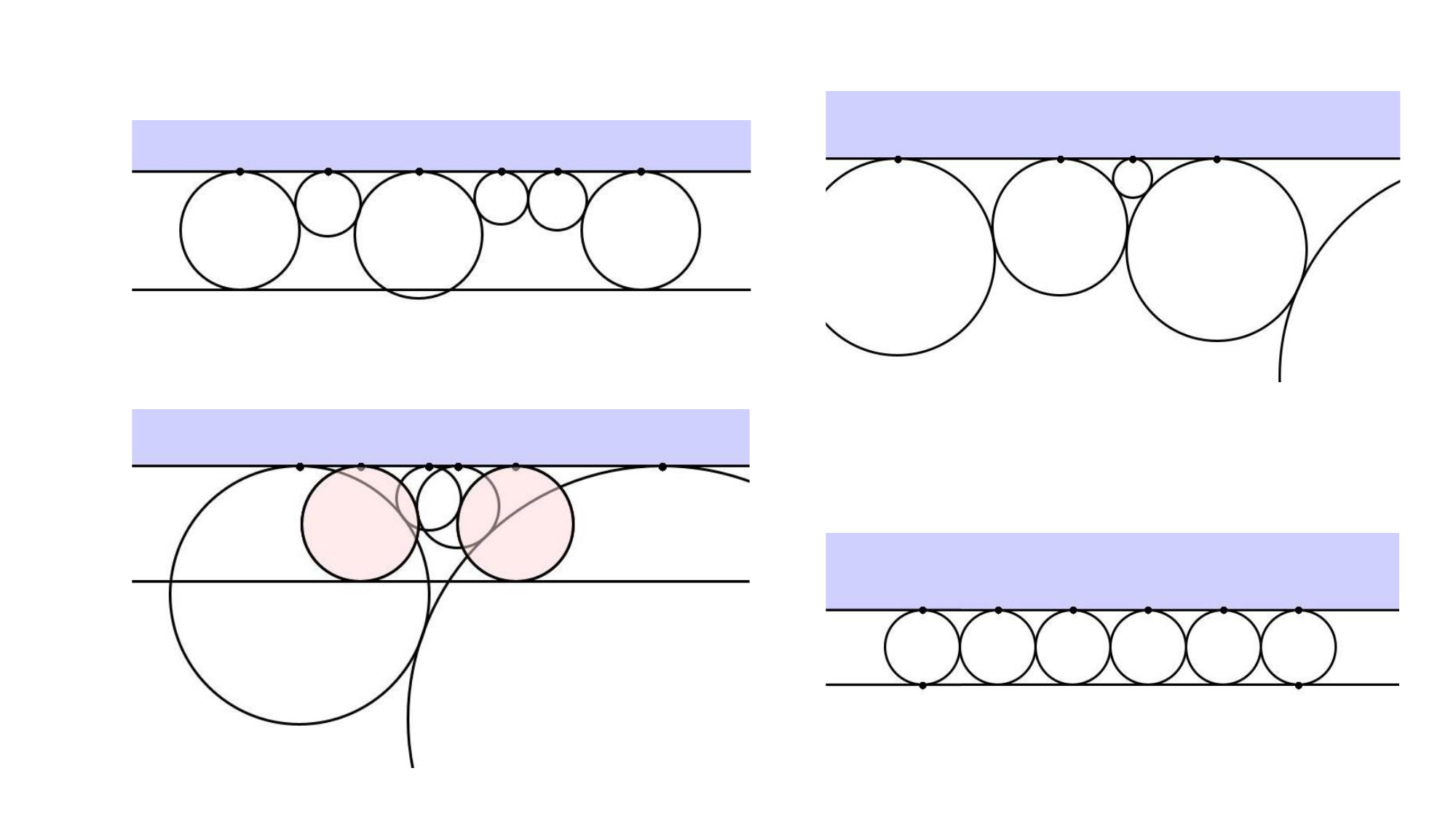}
\put (10,31) {(a) Overlap between $c_3$ and $c_0$}
\put (16,40) {$c_1$}
\put (43.5,40) {$c_6$}
\put (11,34) {$c_0$}
\put (28,39) {$c_3$}
\put (76.75,39.25) {$\bullet$}
\put (58,27) {(b) Extraneous tangency}
\put (77,46.5) {$c_j$}
\put (71,40) {$c_{j-1}$}
\put (82,38) {$c_{j+1}$}
\put (10,1) {(c) Twice around $C$}
\put (10,7) {$c_0$}
\put (24,20) {$c_1$}
\put (31,25.25) {$c_2$}
\put (46,8) {$c_3$}
\put (19.25,14.5) {$c_4$}
\put (27,25.25){$c_5$}
\put (35,19) {$c_6$}
\put (58,1) {(d) An extremal 7-flower}
\put (59,5) {$c_0$}
\put (62.5,11) {$c_1$}
\put (88.25,11) {$c_6$}
\put (67.8,8.4) {$\bullet$}
\put (73.05,8.4) {$\bullet$}
\put (78.25,8.4) {$\bullet$}
\put (83.5,8.4) {$\bullet$}
\put (11,45.25) {$C$}
\put (59,47) {$C$}
\put (11,25) {$C$}
\put (59,16) {$C$}
\end{overpic}
\caption{Examples of normalized flower variety}
\label{F:four}
\end{center}
\end{figure}

\subsection{Flower Layouts} \label{SS:flayout}
Given putative schwarzians $\{s_0,s_1,\cdots,s_{n-1}\}$ for an $n$-flower,
the associated flower can be laid out in normalized form using the
following mechanical process, which relies on computations carried out
in the Appendix.

\vspace{15pt}
\begin{enumerate}
\item[(1)]{\noindent We start with the
  half planes $C$ and $c_0$ and the circle $c_1$ in their normalized
  positions.}

\vspace{10pt}
\item[(2)]{With $c_1$ in place and given the schwarzian $s_1$ for its
  edge, the formulas of (\ref{E:S2}) in the Appendix yield the
  tangency point $t_2$ and radius $r_2$ of $c_2$.  }

\vspace{5pt}
\item[(3)]{With $c_1$ and $c_2$ in place and given $s_2$, we are in
  the ``generic'' Situation~3 of the Appendix, but in the special case
  that $r=1$. In particular, we can place $c_3$ by using (\ref{E:S3})
  to compute radius $r_3$ and displacement $\delta_2$, leading to
  tangency point $t_3$.  }

\vspace{5pt}
\item[(4)]{Hereafter we remain in the generic Situation~3, so we can place
  the remaining petals by inductively applying (\ref{E:S3}) to compute
  radii $r_j$ and the displacements to determine the tangency points $t_j,
  j=4,\cdots,n-1$.  }
\end{enumerate}

\vspace{10pt} At this point we would have the petals of the presumptive
flower all in place. However we can see concretely how the compatibility
conditions mentioned earlier might fail:

\vspace{10pt}
\begin{enumerate}
\item[(a)]{If $r_{n-1}\not=1$, $c_{n-1}$ is fails to be tangent to
  $c_0$ --- the flower does not close up.}

\vspace{5pt}  
\item[(b)]{If $t_{n-1}-t_{n-2}\not=2/(\sqrt{3}(1-s_{n-1}))$ after the
  final application of (\ref{E:S3}) would mean that $s_{n-1}$ is not
the  schwarzian for patch $\{c_{n-1},C\,|\,c_{n-2},c_0\}$.}

\vspace{5pt}  
\item[(c)]{If $t_{n-1}\not=2\sqrt{3}(1-s_0)$ then (\ref{E:S1}) tells us
  that $s_0$ is not the schwarzian for patch $\{c_0,C\,|\,c_{n-1},c_1\}$.}
\end{enumerate}

\vspace{10pt}
\noindent Therefore our work, both theoretical and numerical, depends on a
modification of this process:

\begin{LP} Treating the $n-3$ schwarzians $s_1,\cdots,s_{n-3}$ as
  parameters, we build the
normalized flower as described above up to and including the layout
of $c_{n-2}$. We then {\bf force} closure by setting
$r_{n-1}=1$ and placing the last petal $c_{n-1}$.
\end{LP}

\noindent This Layout Process underlies all the work in this section.
Once the construction has put all petals in place, one can 
{\bf directly compute} the remaining three schwarzians $s_{n-2},
s_{n-1}$, and $s_0$ to fill out the full packing label
$\{s_0,\cdots,s_{n-1}\}$.

\begin{Thm} \label{T:LP}
  Given schwarzians $\{s_1,\cdots,s_{n-3}\}$, the {\bf Layout Process}
  results in a legitimate $n$-flower except in the following two
  situations: (a) when $c_{n-2}$ is tangent to $C$ at infinity or (b)
  when the computed $s_0$ exceeds 1.
\end{Thm}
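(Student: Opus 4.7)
The plan is to trace the Layout Process step by step, certifying the iterative phase with the Appendix formulas and then isolating the two places where the closing step can fail. In steps (1)--(4) the formulas (\ref{E:S2}) and (\ref{E:S3}) yield each $c_j$, $j=2,\ldots,n-2$, tangent to both $C$ and $c_{j-1}$ and realizing the prescribed schwarzian $s_{j-1}$ on the shared patch. Because those formulas are expressed in M\"obius-invariant terms, they retain meaning even if an intermediate petal degenerates into a horizontal line (tangent to $C$ at infinity); the next iterate is still a well-defined circle or half plane. The iterative phase therefore runs to completion unconditionally, and the only new data that matters for what follows is the final position of $c_{n-2}$.

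Next I would analyze the forced-closure step. Under the chosen normalization, $C$ is the upper half plane and $c_0$ is the half plane $\{y\le -2\}$, so every circle of radius $1$ tangent to $C$ from below has center on the line $y=-1$ and its lowest point on $y=-2$; in other words, any such unit circle is automatically tangent to $c_0$. Consequently, provided $c_{n-2}$ is a genuine circle tangent to $C$ at a finite point, there is a unique unit circle $c_{n-1}$ tangent to $C$ and to $c_{n-2}$ on the side of $c_{n-2}$ opposite $c_{n-3}$, and this $c_{n-1}$ simultaneously closes the chain by being tangent to $c_0$. The single obstruction is when $c_{n-2}$ is itself tangent to $C$ at infinity, i.e., a horizontal half plane: the tangency distance between $C$ and $c_{n-2}$ then forbids the existence of a unit circle tangent to both. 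This yields exception~(a).

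Finally, assuming closure, I would check whether the completed configuration is a legitimate realization of the combinatorial flower. The three remaining schwarzians $s_{n-2}$, $s_{n-1}$, and $s_0$ are forced by the existing geometry; for the first two, the adjacent petals were placed as genuine circles abutting $C$, so no pathology can arise. Only the patch at edge $e_0$ needs inspection: as the discussion built around \F{base} records, the two faces sharing an edge have non-overlapping interstices precisely when the edge's schwarzian does not exceed~$1$, and when $s_0>1$ the interstices overlap so the configuration is not a coherent geometric flower. This gives exception~(b). I expect the main obstacle to be establishing that the iterative formulas really do extend uniformly across all degenerate intermediate configurations, so that the only genuine failure modes are the two final compatibility checks identified above; this is what the M\"obius invariance built into the Appendix formulas is meant to guarantee.
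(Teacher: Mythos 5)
Your proposal follows essentially the same route as the paper: the iterative phase is mechanical, forced closure works because a unit circle tangent to the upper half plane $C$ automatically has its lowest point on $y=-2$ and hence is tangent to $c_0$, and the only failure modes are the two listed exceptions. One imprecision in your handling of exception (a): you assert that when $c_{n-2}$ is a half plane no unit circle tangent to both $C$ and $c_{n-2}$ can exist, but that is only true when the preceding radius $r_{n-3}\not=1$; if $r_{n-3}=1$ the half plane $c_{n-2}$ sits exactly at height $-2$, every unit circle tangent to $C$ is then tangent to it, and the failure is \emph{ambiguity} of placement (the tangency point of $c_{n-1}$ is undetermined because $u_{n-2}$ is unknown) rather than non-existence --- the paper's proof explicitly covers both sub-cases (``impossible or ambiguous''). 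For exception (b) the paper also records the concrete geometric manifestation, namely that $s_0>1$ is equivalent via (\ref{E:S1}) to $t_{n-1}=2\sqrt{3}\,u_0<0$, so $c_{n-1}$ lands to the left of $c_1$; your appeal to the overlapping-interstice exclusion from the discussion of the base patch reaches the same conclusion.
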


\begin{proof} The first statement requires no proof, as the mechanics
  are straightforward. As for situation (a), in this case $c_{n-2}$ is
  a half plane, meaning that placement of $c_{n-1}$ simultaneously tangent to
  $C,c_{n-2}$, and $c_0$ is either impossible or ambiguous. Situation
  (b) violates a condition we placed on schwarzians; in this case, 
  petal $c_{n-1}$ ends up to the left of $c_1$. For details on the exceptions,
  go to the closing paragraph of \S\ref{SS:A1}.
\end{proof}
    
\subsection{Important Observations}
In the pencil-and-paper computations leading to the formulas of the
Appendix (and the associated code in {\tt CirclePack}) it became clear
that a new parameter $u=1-s$ is preferrable to the schwarzian $s$
itself. Instead of label $\{s_0,\cdots,s_{n-1}\}$, we will interchangeably use
$\{u_0,\cdots,u_{n-1}\}$, though we continue to treat the $s$-variables as
the proper labels. The author can offer no geometric significance for
this new $u$-variable, but converting the $s$'s in our discussion of
\F{base} to $u$'s may help the reader develop some intuition.  For the
reasons discussed there, we limit our work to $s\in(-\infty,1)$, thus
$u\in(0,\infty)$.
      
Also note that although our normalization picks $c_0$ to be a half
plane, any petal of a flower may be designated as $c_0$ as a simple
matter of indexing. Furthermore, if the order of the petals in a
flower is reversed, the result is still a flower and the order of
schwarzians will have been reversed.  These observations explain,
respectively, the cyclic and the symmetric features in this lemma.

\begin{Lem} \label{L:cr}
  Suppose $\{s_0,\cdots,s_{n-1}\}$ is a packing (edge) label for an
  $n$-flower. If one shifts the order of the schwarzians cyclically or
  reverses the order, the result is again a packing label. This holds
  equally for the $u$-variables $\{u_0,\cdots,u_{n-1}\}$.
\end{Lem}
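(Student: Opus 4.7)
The plan is to exploit two facts from Section~2: (i) the intrinsic schwarzian at an interior edge of a flower depends only on the underlying four-circle patch (up to M\"obius equivalence), and (ii) it is symmetric under interchanging the two faces $f$ and $g$ of that patch, as noted just after Definition~\ref{D:intsch}. Both claims will follow by realizing the shifted (respectively reversed) label via the \emph{same} circle packing that realizes the original, merely re-indexed.

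For the cyclic case, let $P$ be a packing of $\bP$ with central circle $C$ and petal chain $c_0,c_1,\ldots,c_{n-1}$ realizing $\{s_0,\ldots,s_{n-1}\}$, and fix $k$. Re-index by $c'_j:=c_{j+k\,(\mathrm{mod}\,n)}$. The patch $\fp'_j=\{c'_j,C\,|\,c'_{j-1},c'_{j+1}\}$ consists of exactly the same four circles as the old patch $\fp_{j+k}$; by (i) the new schwarzian at edge $e'_j$ equals $s_{j+k}$, so the cyclic shift is realized by $P$. For the reversal, re-index instead by $c''_j:=c_{-j\,(\mathrm{mod}\,n)}$, so $c''_0=c_0$, $c''_1=c_{n-1}$, $c''_2=c_{n-2}$, and so on. The new patch $\fp''_j$ uses the same four circles as the old $\fp_{-j}$, but with its two side petals $c_{-j-1}$ and $c_{-j+1}$ interchanged --- equivalently, with the roles of the faces $f$ and $g$ swapped. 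By (ii) this leaves the schwarzian fixed, so the schwarzian at $e''_j$ equals $s_{-j\,(\mathrm{mod}\,n)}$, exhibiting the reversed label as one realized by $P$. The $u$-variable statements follow at once from the bijection $u=1-s$.

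No step presents a real obstacle: the argument is pure bookkeeping, with all substantive content lodged in facts (i) and (ii). The one thing worth highlighting is that the intrinsic schwarzian, although defined through an asymmetric-looking comparison with the base patch $\fp_{\Delta}$, is nonetheless an invariant of the patch viewed as an unordered pair of contiguous faces --- which is precisely what renders re-indexing the flower, cyclically or by reversal, harmless.
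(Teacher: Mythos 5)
Your argument is correct and is essentially the paper's own: the paper justifies the lemma by the remark immediately preceding it, namely that any petal may be designated $c_0$ (giving the cyclic shift) and that reversing the petal order yields the same geometric flower with the schwarzians reversed, the latter resting on the $f$--$g$ symmetry of the intrinsic schwarzian noted after Definition~\ref{D:intsch}, exactly as in your facts (i) and (ii). Your write-up merely makes the re-indexing bookkeeping explicit.
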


\vspace{10pt}
\subsection{Un-branched Flowers} Our results are most complete in the case
of un-branched $n$-flowers, where we work step by step starting with
$n=3$.  Examples for degrees $n=3,\,4,\,5$, and $6$ are shown in
\F{degree}. These provide some visual clues to the patterns we will
discuss below. \F{degree}(d) is also cautionary, as it illustrates
three M\"obius equivalent normalized representations of the same
6-flower, differing only by which petal had been designated as $c_0$.

\begin{figure}
\begin{center}
\begin{overpic}[width=1.4\textwidth
  ]{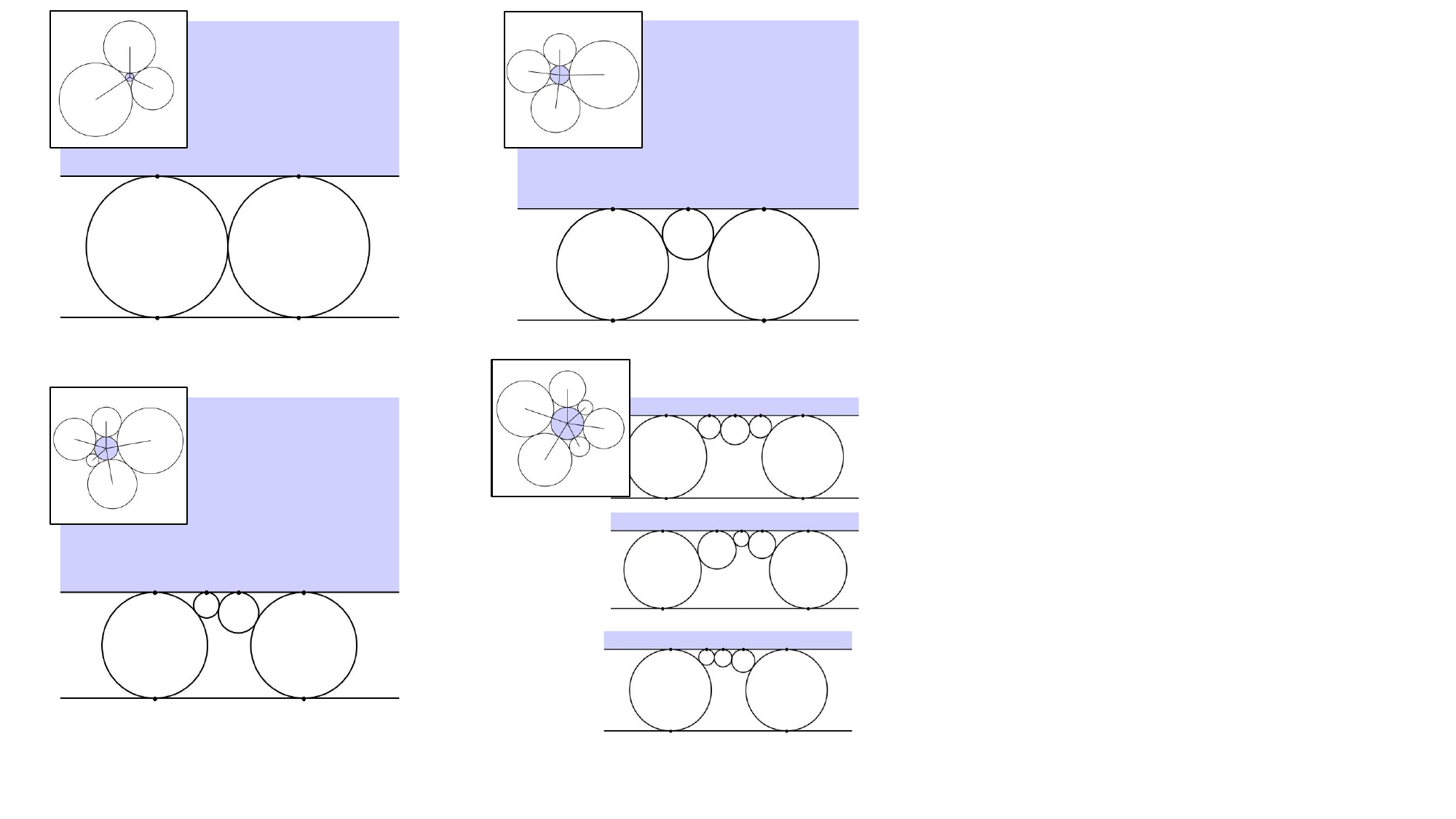}
  \put (10,32) {(a) 3-flower.}
  \put (43,32) {(b) 4-flower.}
  \put (10,3) {(c) 5-flower.}
  \put (43,3) {(d) One 6-flower.}
\end{overpic}
\caption{Normalized flowers}
\label{F:degree}
\end{center}
\end{figure}

\subsubsection{3-Flowers} \label{SS:3} There is only one 3-flower up
to M\"obius transformations. In particular, the three edge schwarzians
share identical values. Note in \F{base}, that the value 
$s=1-1/\sqrt{3}$ leads to a 3-flower, namely, that formed by
$C_v,C_a,$ and the associated
$\widehat{C}_s$ (enclosing $\infty$).

\begin{Lem} \label{L:3f}
  The intrinsic schwarzian for any edge of a 3-flower is
  $s=1-1/\sqrt{3}$, and hence, $u=1/\sqrt{3}$.
\end{Lem}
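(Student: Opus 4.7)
The plan is to combine a short uniqueness argument with one explicit computation of a tangency point in the normalized layout.

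First, I would argue that the 3-flower is unique up to M\"obius transformations. In a closed 3-flower the central circle $C$ is tangent to each of $c_0, c_1, c_2$ and consecutive petals are tangent, so the closure condition $c_2 \sim c_0$ makes $\{C, c_0, c_1, c_2\}$ a family of four mutually tangent circles. Since a M\"obius transformation is determined by its action on any triple of mutually tangent circles (Section~\ref{SS:mob}), any two such four-circle configurations with matching combinatorial labels are M\"obius equivalent.

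Second, I would deduce that the three edge schwarzians are equal. The uniqueness above supplies, for the single 3-flower, a M\"obius self-map that cyclically permutes $c_0, c_1, c_2$ while fixing $C$ as a set. Since intrinsic schwarzians are M\"obius invariant, this forces $s_0 = s_1 = s_2$. (Equivalently, Lemma~\ref{L:cr} together with M\"obius uniqueness of the 3-flower forces the label to be cyclically symmetric, hence constant.)

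Third, I would pin down the common value using the normalized Layout Process with $n = 3$. Place $C$, $c_0$, and $c_1$ in the positions of Figure~\ref{F:f7}. The parameter list $s_1,\ldots,s_{n-3}$ is empty, so the Process goes straight to forcing closure by setting $r_{n-1} = r_2 = 1$. The three tangency requirements $c_2 \sim C$, $c_2 \sim c_0$, and $c_2 \sim c_1$ pin $c_2$ down as the unit disc centered at $2 - i$, giving $t_2 = 2$. The Appendix formula (\ref{E:S1}), recalled in the compatibility condition (c) of \S\ref{SS:flayout}, states $t_{n-1} = 2\sqrt{3}(1 - s_0)$; with $n = 3$ and $t_2 = 2$ this yields $s_0 = 1 - 1/\sqrt{3}$, so $u_0 = 1/\sqrt{3}$. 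Combined with $s_0 = s_1 = s_2$, the lemma follows.

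I do not expect a serious obstacle. This lemma is in effect a sanity check on the base-patch picture of Figure~\ref{F:base}, where the value $s = 1 - 1/\sqrt{3}$ is already flagged as the value at which $\widehat{C}_s$ becomes a fourth circle mutually tangent to $\{C_v, C_w, C_a\}$. The one minor delicacy is that (\ref{E:S1}) is being invoked in the degenerate case $n = 3$, where no earlier schwarzians feed the layout; but that formula expresses $s_0$ purely in terms of $t_{n-1}$, so it applies verbatim.
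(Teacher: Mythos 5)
Your proposal is correct and follows the same two-step skeleton as the paper's own (very brief) argument: M\"obius-uniqueness of the 3-flower forces the three edge schwarzians to coincide, and a single explicit computation pins down the common value. The only immaterial difference is where that value is computed --- the paper reads $s=1-1/\sqrt{3}$ off the base-patch picture of Figure~\ref{F:base}, where $\widehat{C}_s$ becomes tangent to all three circles of $f_{\Delta}$, whereas you extract it from the normalized layout with $t_2=2$ and (\ref{E:S1}); both are one-line verifications of the same fact.
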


This value of $s$ may also occur in higher degree flowers in the case
of extraneous tangency, as seen, for example, in \F{four}(b), where
the schwarzian $s_j$ takes this value.  (It is worth noting that in
circle packings, interior vertices of degree~3 are somewhat extraneous
themselves: the associated circle is determined uniquely by its three
neighboring circles and could be omitted without affecting the
packing's overall geometric structure. On the other hand, omitting
such a circle does affect schwarzians, namely, those of the outer
three edges of its flower.)

\subsubsection{4-Flowers} \label{SS:4}
\F{ffs} shows a sequence of un-branched 4-flowers. Petals $c_1$ and
$c_3$ have radius 1, so it is clear that the size of the shaded petal,
$c_2$, determines the entire normalized 4-flower. There is thus one
degree of freedom. In (\ref{E:S2}) of the Appendix we see the radius
of $c_2$ as a monotone function of $s_1$, so we can use $s_1$ to
parameterize all 4-flowers. (The curious case of a branched $4$-flower
will be displayed in \S\ref{SS:brflow}.)

\begin{figure}[h]
\begin{center}
\begin{overpic}[width=.9\textwidth
  ]{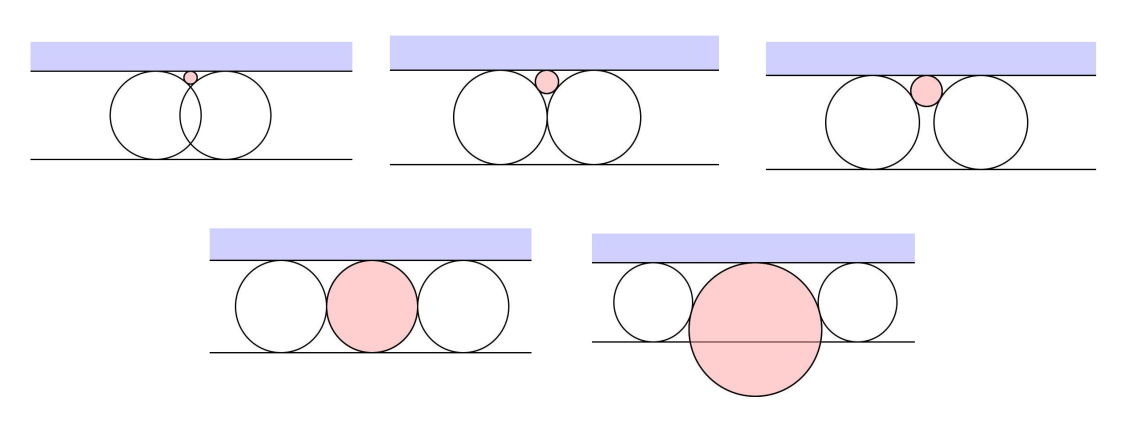}
  \put (8,19.5) {(a) $s_1=-0.5$}
  \put (35.5,19.5) {(b) $s_1= -0.154701$}
  \put (75.5,19.5) {(c) $s_1=0$}
  \put (21.5,3) {(d) $s_1=0.422650$}
  \put (55,-1) {(e) $s_1= 0.555555$}
\end{overpic}
\caption{Variety in 4-flowers}
\label{F:ffs}
\end{center}
\end{figure}

\vspace{5pt}
\begin{enumerate}
\item[$\bullet$] {Apply (\ref{E:S2}) to compute the tangency point and
  radius of $c_2$:
  \begin{equation} \notag
    \delta_1=t_2=2/(\sqrt{3}\,u_1)\ \ \text{ and }
    \ \ r_2=1/(\sqrt{3}\,u_1)^2.
    \end{equation}
}
\item[$\bullet$] {Apply (\ref{E:R3}) with $R=r_2$ and $r=1$ to compute
$u_2=2/(3u_1).$.
}
\end{enumerate}

\vspace{10pt}
\noindent Here we initiate a pattern that we will carry forward
for larger $n$: namely, we define functions 
\begin{equation} \notag
  \fU_4(x)=2/(3x),\qquad \fC_2(x)=x,
\end{equation} 
and note that $u_2=\fU_4(u_1)$ under the constraint that $\fC_2(u_1)>0$.
Furthermore, as we will do for larger degrees, we can engage
Lemma~\ref{L:cr}. With successive left shifts of the parameters we
conclude that $u_3=\fU_4(u_2)$ and $u_0=\fU_4(u_3)$, thereby completing
the full packing label. In particular, note that $u_1=u_3$, $u_0=u_2$,
and $u_1u_2=2/3$. We arrive at a very clean characterization of
packing labels for 4-flowers:

\begin{Lem}\label{L:4f}
  Every un-branched 4-flower has edge schwarzians of the form
  $\{s,s',s,s'\}$ where $s$ and $s'$ satisfy $(1-s)(1-s')=2/3$ (i.e.,
  $uu'=2/3$).  Moreover, the 4-flower is univalent if and only if $s$
  and $s'$ lie in the closed interval
  $I=[1-\frac{2}{\sqrt{3}},1-\frac{1}{\sqrt{3}}]$ (i.e., $u,u' \in
  [\frac{1}{\sqrt{3}},\frac{2}{\sqrt{3}}]$).
\end{Lem}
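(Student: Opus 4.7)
The plan is to derive the alternating form from the single recursion $u_2 = 2/(3u_1)$ established just before the lemma, leveraged via Lemma~\ref{L:cr}, and then read the univalence range off the normalized picture. First I would apply Lemma~\ref{L:cr} to the nontrivial cyclic shifts of a putative packing label $\{s_0,s_1,s_2,s_3\}$; each shift is again a packing label and so must satisfy the same recursion, giving
\begin{equation}\notag
u_0 u_1 = u_1 u_2 = u_2 u_3 = u_3 u_0 = \tfrac{2}{3}.
\end{equation}
Dividing consecutive equalities by $u_1$ and by $u_2$ yields $u_0 = u_2$ and $u_1 = u_3$, so the label must take the alternating form $\{s,s',s,s'\}$ with $uu' = 2/3$. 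Conversely, such a label trivially satisfies all four recursions, so the Layout Process closes up to a genuine flower.

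For univalence I would work entirely in the normalization of \F{f7}. The only non-contiguous petal pairs in a $4$-flower are $\{c_0,c_2\}$ and $\{c_1,c_3\}$, so univalence is precisely equivalent to interior-disjointness of these two pairs. Writing $u:=u_1$, the formulas already recorded in \S\ref{SS:4} give $t_2 = 2/(\sqrt{3}\,u)$ and $r_2 = 1/(3u^2)$, so $c_2$ has center $(t_2,-r_2)$ and its lowest point lies at $y=-2r_2$. Since $c_0$ is the half-plane $\{y\le -2\}$, the pair $\{c_0,c_2\}$ has disjoint interiors if and only if $r_2 \le 1$, equivalently $u \ge 1/\sqrt{3}$.

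For the other pair I would compute $t_3$ directly: the Layout Process forces $r_3 = 1$, and external tangency of $c_3$ (center $(t_3,-1)$, radius $1$) with $c_2$ yields $(t_3 - t_2)^2 = 4r_2$, whence
\begin{equation}\notag
t_3 = t_2 + 2\sqrt{r_2} = \frac{4}{\sqrt{3}\,u}.
\end{equation}
Since $c_1$ and $c_3$ both have radius $1$ with centers on the line $y=-1$ at abscissas $0$ and $t_3$, their interiors are disjoint if and only if $t_3 \ge 2$, equivalently $u \le 2/\sqrt{3}$. Combining the two inequalities gives $u \in [1/\sqrt{3},\,2/\sqrt{3}]$, and the relation $uu' = 2/3$ automatically places $u'$ in the same interval; translating back via $s = 1-u$ yields the stated interval $I$.

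I do not anticipate substantial obstacles. The only delicate point is bookkeeping, namely verifying that the two pairs $\{c_0,c_2\}$ and $\{c_1,c_3\}$ really exhaust the potential overlaps, which is immediate for $n=4$ since all other petal pairs are adjacent and hence tangent by construction. As a sanity check on the $t_3$ computation, condition~(c) of the Layout Process independently gives $t_3 = 2\sqrt{3}\,u_0 = 2\sqrt{3}\cdot 2/(3u) = 4/(\sqrt{3}\,u)$, in agreement with the direct tangency calculation.
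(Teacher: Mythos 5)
Your proposal is correct. The first half is essentially the paper's own argument: the paper derives $u_2=\fU_4(u_1)=2/(3u_1)$ from (\ref{E:S2}) and (\ref{E:R3}) and then invokes Lemma~\ref{L:cr} to cycle this relation around the flower, concluding $u_1=u_3$, $u_0=u_2$, and $u_1u_2=2/3$ — your chain $u_0u_1=u_1u_2=u_2u_3=u_3u_0=2/3$ is just a tidier packaging of the same shifts. Where you genuinely add something is the univalence interval: the paper asserts it with reference to the extremes in Figure~\ref{F:ffs}(b),(d) and, implicitly, the later general Theorem~\ref{T:univalent}, whereas you give a short self-contained verification in the normalized picture. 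Your two checks are exactly conditions (\ref{E:B}) and (\ref{E:A}) specialized to $n=4$: disjointness of $c_0$ and $c_2$ is $r_2\le 1$, i.e.\ $u\ge 1/\sqrt{3}$, and disjointness of $c_1$ and $c_3$ is $t_3=4/(\sqrt{3}u)\ge 2$, i.e.\ $u\le 2/\sqrt{3}$; your observation that these two pairs exhaust the non-adjacent petal pairs is what makes the $n=4$ case collapse so cleanly (for larger $n$ one needs the full apparatus of Theorem~\ref{T:univalent}). The cross-check $t_3=2\sqrt{3}\,u_0=4/(\sqrt{3}u)$ against (\ref{E:S1}) is consistent, and the involution $u\mapsto 2/(3u)$ does preserve the interval $[1/\sqrt{3},2/\sqrt{3}]$, so the translation back to $s$ is sound.
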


\F{ffs}(b), (c), and (d) are univalent 4-flowers, with (b) and (d)
representing the extremes of parameter values allowed for
univalence. One should not be mislead by this complete understanding of
4-flowers --- things become increasingly more complicated as the degree
goes up.

\subsubsection{5-Flowers} \label{SS:5}
Our Layout Process tells us that we have 2 degrees of freedom, namely,
$s_1$ and $s_2$. \F{n5} indicates the quantities we can
compute as functions of these for a generic 5-flower.  

\begin{figure}[h]
\begin{center}
\begin{overpic}[width=.65\textwidth
  ]{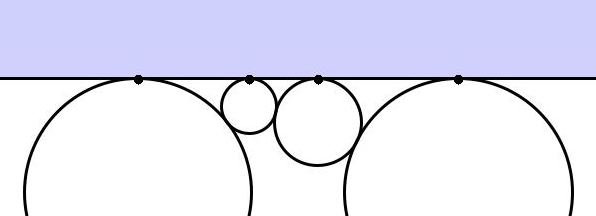}
  \put (5,29) {$C$}
  \put (10,8) {$c_1$}
  \put (88,8) {$c_4$}
  \put (23.2,22) {\line(0,1.0){10}}
  \put (41.9,22) {\line(0,1.0){10}}
  \put (53.5,22) {\line(0,1.0){10}}
  \put (76.9,22) {\line(0,1.0){10}}
  \put (20,24.5) {0}
  \put (78.5,24.5) {$t_4$}
  \put (31,29.5) {$t_2$}
  \put (23,28) {\vector(1.0,0){18.75}}
  \put (45.5,27.5) {$\delta_2$}
  \put (42,26) {\vector(1.0,0){11.5}}
  \put (63,29.5) {$\delta_3$}
  \put (53.5,28) {\vector(1.0,0){23.85}}
  \put (40.5,16) {$r_2$}
  \put (42,19){\vector(2.5,-2.5){3.5}}
  \put (55.5,15) {$r_3$}
  \put (53.5,16){\vector(2.5,-2.5){5}}
\end{overpic}
\caption{Quantities to compute in a normalized 5-flower}
\label{F:n5}
\end{center}
\end{figure}

\vspace{10pt} Quantities resolve cumulatively as we add petals: With
$C,c_0$, and $c_1$ in place, we can apply the computation in the
earlier $4$-degree case to compute $r_2=1/(\sqrt{3}\,u_1)^2$. From
there, successive computations from the Appendix yield various radii
and displacements:

\vspace{5pt}
\begin{enumerate}
\item[$\bullet$]{Applying (\ref{E:S3}) with $u=u_2$, $r=1$, and
  $R=r_2$:
  \begin{equation} \notag
    \delta_2=\dfrac{2}
    {\sqrt{3}\,u_1(3u_1u_2-1)}\ \text{ and }\ 
      r_3=\dfrac{1}{(3u_1u_2-1)^2}.
  \end{equation}
}
\end{enumerate}

\vspace{5pt}
\noindent Here we encounter a constraint: if $(3u_1u_2-1)<=0$, then
$\delta_2$ will be negative or undefined. As we will see later, this
can happen for branched flowers. For the un-branched case we must
impose the condition $(3u_1u_2-1)>0$.

\vspace{5pt}
\begin{enumerate}
\item[$\bullet$] {Using radii $r_3$ from above and the mandated $r_4=1$:
  \begin{equation} \notag
    \delta_3=2\sqrt{r_3}=\dfrac{2}{3u_1u_2-1},
  \end{equation}
}
\item[$\bullet$] {Setting $r=r_2$ and $R=r_3$ in (\ref{E:R3}) implies:
  \begin{equation} \notag
    u_3=\dfrac{u_1+1/\sqrt{3}}{3u_1u_2-1}.
  \end{equation}
}
\end{enumerate}

\vspace{5pt}
\noindent Define the rational function $\fU_5$
and the polynomial $\fC_3$ as follows:
\begin{equation}\notag
  \fU_5(x_1,x_2)=\dfrac{x_1+1/\sqrt{3}}{3x_1x_2-1},\qquad
  \fC_3(x_1,x_2)=3x_1x_2-1.
\end{equation}
Computation of $u_3$ becomes simply $u_3=\fU_5(u_1,u_2)$ under the
constraint $\fC_3(u_1,u_2)>0$. Applying the cyclic property of
Lemma~\ref{L:cr}, we get in succession $u_4=\fU_5(u_2,u_3)$ and
$u_0=\fU_5(u_3,u_4)$. 

\subsubsection{6-Flowers} \label{SS:6}
We work through this one additional case because the full strength of
Situation~3 and (\ref{E:S3}) is first felt with addition of the sixth
petal. (Also because 6-flowers have always occupied a prominent place
in circle packing: in the ``curvature'' language common in this topic,
6-flowers are ``flat''.)

6-flowers involve three degrees of freedom with parameters
$\{s_1,s_2,s_3\}$. We may extend the notations in the previous section
and \F{n5} by adding one additional petal. The computations of $t_2$
and $\delta_2$ and the constraint $(3u_1u_2-1)>0$ are exactly as
earlier. The computation for $\delta_3$, however, needs to
be revisited.

\vspace{5pt}
\begin{enumerate}
\item[$\bullet$]{Applying (\ref{E:S3}) with $r_2$ and $r_3$ as computed
  earlier gives
\begin{align} \notag
  &\delta_3=\delta(u_3,r_2,r_3)=\dfrac{2}
  {\sqrt{3}\,(3u_1u_2-1)(3u_1u_2u_3-u_1-u_3)},\\
  &r_4=\dfrac{\delta_3^2}{4r_3}=\dfrac{1}{3(3u_1u_2u_3-u_1-u_3)^2}.\notag
\end{align}
}
\end{enumerate}

\vspace{5pt}
\noindent Note that $1/\sqrt{r_4}=\sqrt{3}(3u_1u_2u_3-u_1-u_3)$ if
this is positive. Then by completing the flower with petal $c_5$ of
mandated radius 1, we can compute the next label:

\vspace{5pt}
\begin{enumerate}
  \item[$\bullet$]{Applying (\ref{E:R3}) with $r=r_3$ and $R=r_4$ gives
$$ u_4=\dfrac{u_1u_2}{(3u_1u_2u_3-u_1-u_3)}.$$
  }
\end{enumerate}

\vspace{7pt}
\noindent Define the rational function $\fU_6$ and
polynomial $\fC_4$:
\begin{align} \notag
  \fU_6(x_1,x_2,x_3)=&x_1x_2/(3x_1x_2x_3-x_1-x_3),\\
  \fC_4(x_1,x_2,x_3)=&\sqrt{3}(3x_1x_2x_3-x_1-x_3).\notag
\end{align}
Thus $u_4=\fU_6(u_1,u_2,u_3)$ under the assumptions $\fC_3(u_1,u_2)>0$
and $\fC_4(u_1,u_2,u_3)>0$. Cyclic shifts provide the remaining
two labels:
\begin{equation} \notag
  u_5=\fU_6(u_2,u_3,u_4)\ \ \text{ and }\ \ u_0=\fU_6(u_3,u_4,u_5).
\end{equation}

\subsubsection{The General Case} \label{SS:GP} In the $n$-flower
case we are starting with $n-3$ parameters $\{u_1,\cdots,u_{n-3}\}$.
A look at the various formulas from the Appendix suggests focusing
on reciprocal roots of the radii. Here
are the first few expressions. (We introduce a convenient notational
devise that abbreviates a product of $u$'s via multiple subscripts,
allowing us, for example, to write $u_{1,4,5}$ in place of the product
$u_1u_4u_5$.)
\begin{align} \label{E:sqrs}
  &1/\sqrt{r_2}=\fC_2(u_1)=\sqrt{3}u_1,\notag \\
  &1/\sqrt{r_3}=\fC_3(u_1,u_2)=3u_{1,2}-1,\notag \\
  &1/\sqrt{r_4}=\fC_4(u_1,u_2,u_3)={\sqrt{3}}(3u_{1,2,3}-u_1-u_3),  \\
  &1/\sqrt{r_5}=\fC_5(u_1,\cdots,u_4)=9u_{1,2,3,4}-3u_{1,4}-3u_{3,4}-3u_{1,2}+1, \notag \\
  &1/\sqrt{r_6}=\fC_6(u_1,\cdots,u_5)=\notag\\
  &\ \ \ \ \ \ \ \sqrt{3}(9u_{1,2,3,4,5}-3u_{1,4,5}-3u_{3,4,5}-3u_{1,2,5}
  -3u_{1,2,3}+u_1+u_3+u_5).\notag\\
  &\cdots\cdots\notag
\end{align}
For a given $j$, the expression for $1/\sqrt{r_j}$ is
ensured only if $\fC_k(u_1,\cdots,u_{k-1})>0$ for $k=2,\cdots,j$,
and only within $n$-flowers for which $n\ge (j+2)$. Should $\fC_j$ be
negative for some $j$, then the flower will be branched.

With these cautions in mind, the functional notations can become
quite convenient. A label for an $n$-flower may be expressed as an
$n$-vector $p=(u_0,\cdots,u_{n-1})$. We may now write
$\fC_j(u_1,\cdots,u_{j-1})$ as $\fC(p)$, noting that $\fC_j$ uses only
the $j-1$ coordinates $1,2,\cdots,(j-1)$ of $p$. Rewriting
(\ref{E:S3}) in functional notation, we have
\begin{equation}\label{E:ftnl}
  \fC_{j+1}(p)=\sqrt{3}u_j\fC_j(p)-\fC_{j-1}(p),\ 
  3\le j\le n-3.
\end{equation}

When our construction places the last petal, $c_{n-1}$, we compute
$u_{n-2}$ by applying (\ref{E:R3}). In functional notation,
this becomes
\begin{equation}\label{E:ucc}\notag
  u_{n-2}=\fU_n(p)=\dfrac{1+\fC_{n-3}(p)}{\sqrt{3}\,\fC_{n-2}(p)}, n\ge 5,
\end{equation}
where $\fU_n$ depends only on coordinates $1,\cdots,(n-3)$ of $p$.
Here are several of these functions in explicit form:
\begin{align} \label{E:Us}
  &u_2=\fU_4(u_1)=\dfrac{2}{3u_1},\notag \\
  &u_3=\fU_5(u_1,u_2)=\dfrac{u_1+1/\sqrt{3}}{3u_{1,2}-1},\notag \\
  &u_4=\fU_6(u_1,u_2,u_3)=\dfrac{u_{1,2}}
  {3u_{1,2,3}-u_1-u_3}, \\
  &u_5=\fU_7(u_1,u_2,u_3,u_4)=\dfrac{3(3u_{1,2,3}-u_1-u_3)+1/\sqrt{3}}
  {3(3u_{1,2,3,4}-u_{1,2}-u_{1,4}-u_{3,4})+1},\notag\\
  &u_6=\fU_8(u_1,u_2,u_3,u_4,u_5)\notag\\
  &\ \ \ \ \ =\dfrac{3(3u_{1,2,3,4}-u_{1,2}-u_{1,4}-u_{3,4})+2}
  {3(9u_{1,2,3,4,5}-3u_{1,2,3}-3u_{1,2,5}-3u_{1,4,5}-3u_{3,4,5}+u_1+u_3+u_5)}.\notag\\
  &\cdots\cdots\notag
\end{align}

\vspace{10pt} We gather the results for un-branched flowers in this
theorem. Also see the comments that follow.

\begin{Thm} \label{T:unbranched}
  Given $n>3$, the parameters $\{u_1,\cdots,u_{n-3}\}$ are
  part of a packing label for an un-branched $n$-flower if and
  only if
  \begin{equation}\label{E:constraints}
    \fC_j(u_1,\cdots,u_{j-1})>0,\ j=2,\cdots,(n-2).
    \end{equation}
  In this case, these expressions 
  \begin{align}\label{E:final3}
    u_{n-2}=&\fU_n(u_1,\cdots,u_{n-3}),\notag\\
    u_{n-1}=&\fU_n(u_2,\cdots,u_{n-2}),\\
    u_0=&\fU_n(u_3,\cdots,u_{n-1}),\notag
  \end{align}
  allow computation of the three remaining labels.
\end{Thm}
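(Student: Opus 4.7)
The plan is to induct on the Layout Process of Section~\ref{SS:flayout}, showing that the positivity conditions $\fC_j(u_1,\ldots,u_{j-1})>0$ are exactly what is needed at each stage for the next petal to be placed as a legitimate circle tangent to $C$ with positive radius and a positive displacement from its predecessor. For the base, with $C,c_0,c_1$ in their normalized positions, (\ref{E:S2}) gives $t_2=2/(\sqrt{3}u_1)$ and $1/\sqrt{r_2}=\sqrt{3}u_1=\fC_2(u_1)$; positivity here is automatic since we assume $u_1\in(0,\infty)$.

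For the inductive step, suppose petals $c_1,\ldots,c_{j-1}$ are in place and that $1/\sqrt{r_k}=\fC_k(u_1,\ldots,u_{k-1})$ for $2\le k\le j-1$. The generic case (Situation~3 of the Appendix) and formula (\ref{E:S3}) express $\delta_{j-1}$ and $r_j$ in terms of $u_{j-1}$, $r_{j-2}$, and $r_{j-1}$; a direct algebraic reduction then yields the three-term recurrence (\ref{E:ftnl}), i.e.\ $1/\sqrt{r_j}=\sqrt{3}u_{j-1}\,\fC_{j-1}-\fC_{j-2}=\fC_j(u_1,\ldots,u_{j-1})$. The displacement $\delta_{j-1}$ is positive precisely when $\fC_j>0$, so requiring a legal rightward placement of $c_j$ is equivalent to the inequality in (\ref{E:constraints}). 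Continuing through $j=n-2$ establishes both directions of the equivalence asserted in the theorem.

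Once petals $c_1,\ldots,c_{n-2}$ are in place, the Layout Process forces $r_{n-1}=1$, positioning $c_{n-1}$ as required for closure. The three remaining labels $s_{n-2},s_{n-1},s_0$ are then \emph{computed}, not chosen. Applying (\ref{E:R3}) with $r=r_{n-3}$ and $R=r_{n-2}$, and substituting $1/\sqrt{r_{n-3}}=\fC_{n-3}$ and $1/\sqrt{r_{n-2}}=\fC_{n-2}$, gives the first line of (\ref{E:final3}): $u_{n-2}=(1+\fC_{n-3})/(\sqrt{3}\,\fC_{n-2})=\fU_n(u_1,\ldots,u_{n-3})$. For the other two labels I invoke Lemma~\ref{L:cr}: a single cyclic shift of the legitimate packing label $(u_0,u_1,\ldots,u_{n-1})$ is again a packing label, and applying the just-derived formula to it identifies the $(n-2)$nd entry of the shifted label, namely $u_{n-1}=\fU_n(u_2,\ldots,u_{n-2})$. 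A second shift yields $u_0=\fU_n(u_3,\ldots,u_{n-1})$.

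The main obstacle is the algebraic verification that the radii generated by iterating (\ref{E:S3}) really do satisfy the clean three-term recurrence (\ref{E:ftnl}); this is the step that converts a messy mechanical layout into the tidy polynomial constraints $\fC_j>0$. Once that recurrence is established, the rest of the proof amounts to careful index bookkeeping and an appeal to the cyclic symmetry of Lemma~\ref{L:cr}.
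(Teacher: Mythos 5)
Your proposal is correct and follows essentially the same route as the paper, which presents no separate formal proof but lets the theorem emerge from the Layout Process, the recurrence $1/\sqrt{r_j}=\sqrt{3}\,u_{j-1}/\sqrt{r_{j-1}}-1/\sqrt{r_{j-2}}$ identifying $1/\sqrt{r_j}$ with $\fC_j$, the reversal formula (\ref{E:R3}) giving $u_{n-2}=\fU_n(u_1,\dots,u_{n-3})$, and the cyclic shifts of Lemma~\ref{L:cr} for the last two labels. Your organization of this material as an explicit induction, with $\fC_j>0$ read off as positivity of the denominator in (\ref{E:S3}) and hence of the displacement $\delta_{j-1}$, is exactly the intended argument.
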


\vspace{5pt}
\noindent This Theorem provides simultaneously a characterization, a
parameterization, and a computational tool for un-branched flowers.
Here are some observations:

\vspace{10pt}
\begin{enumerate}
  \item[(i)] {The functions $\fU_{n}$ and $\fC_{j}$ are particularly
    valuable in light of Lemma~\ref{L:cr}. (Remember, these labels are
    cyclic mod$(n)$.)  In a packing label
    $\{u_0,\cdots,u_{n-1}\}$, any one of its entries $u_j$ may be written
    as $u_j=\fU_n(\sigma)$, where $\sigma$ is the sequence of $n-3$
    entries preceeding $u_j$ (or the reverse of the $n-3$ entries
    following $u_j$).}

    \vspace{8pt}
    \item[(ii)] {Additional relationships pertaining to the normalized
      flower may be extracted from formulas of the Appendix. Here are
      some examples:
    \begin{equation} \label{E:qr}
      \dfrac{1}{\sqrt{r_j}}=\dfrac{\sqrt{3}\,u_{j-1}}{\sqrt{r_{j-1}}}
      -\dfrac{1}{\sqrt{r_{j-2}}},\ \ j=3,4,\cdots,n-1.
    \end{equation}
    \begin{equation} \notag\label{E:rur}
      u_j=\dfrac{\sqrt{\frac{r_j}{r_{j-1}}}+
        \sqrt{\frac{r_j}{r_{j+1}}}}{\sqrt{3}},\ j=2,\cdots,n-3.
    \end{equation}
    \begin{equation} \notag\label{E:2srR}
      \delta_j=2\sqrt{r_jr_{j+1}},\ j=1,\cdots,n-2.
    \end{equation}
    }

  \vspace{8pt}
\item[(iii)] {A careful look at the formulas for a normalized flower
  will show that the radii $r_j$, reciprocal roots $1/\sqrt{r_j}$, and
  tangency points $t_j$ of the petals are all rational functions of
  the $u$-parameters (likewise for the $s$-parameters).
  Moreover, these rational functions have
  their coefficients in the number field $\bQ[\sqrt{3}]$.}
  
  \vspace{8pt}
\item[(iv)] {The rational functions $\fU_n$ and polynomials
  $\fC_j$ also have coefficients in $\bQ[\sqrt{3}]$. Note that
  for each $n$, $\fC_{n-2}(u_1,\cdots,u_{j-3})$ is a pole of
  $\fU_n(u_1,\cdots,u_{n-3})$.  On a practical note, unlike
  expressions such as (\ref{E:sqrs}), the functions $\fU_{\cdot}$ and
  $\fC_{\cdot}$ are independent of the flower normalization.}

  \vspace{8pt}
\item[(v)] {The functions $\fU_{\cdot}$ have intriguing self-referential
  behaviour under cyclic shifts and reversals, and this would seem to make them
  quite special. For example, these expressions show how the $\fU_n$ can
  be nested; here $\pssi{u}{j}{k}$ denotes the sequence
  $\{u_j,\cdots,u_k\}$.
\begin{align} \label{E:mus}
  &u_{n-2}=\fU_n(\pssi{u}{1}{n-3})\notag\\
  &u_{n-1}=\fU_n(\pssi{u}{2}{n-3},\fU_n(\pssi{u}{1}{n-3}))\\
  &u_0=\fU_n(\pssi{u}{3}{n-3},\fU_n(\pssi{u}{1}{n-3}),
  \fU_n(\pssi{u}{2}{n-3},\fU_n(\pssi{u}{1}{n-3}))). \notag
\end{align}
The explicit expressions would be quite messy, but would express the
labels $u_{n-2},u_{n-1},u_0$ of (\ref{E:final3}) directly as functions
of the parameters $u_1,\cdots,u_{n-3}$.  }
\end{enumerate}

\vspace{10pt} We are now in position to describe the parameter space
for un-branched $n$-flowers using vectors $p=(u_0,u_2,\cdots,u_{n-1})$
in $\bR_+^n$. Define $\ccV_n$ as the common solutions of the three
rational expressions of (\ref{E:mus}). In particular, $\ccV_n$ is an
algebraic variety of dimension $n-3$ over the number field
$\bQ[\sqrt{3}]$. There are restrictions, however, as $p$ must reside
in the set $\ccC$ where components $u_j$ are positive and the $\ccC_j$
satisfy (\ref{E:constraints}). The parameter space for un-branched
$n$-flowers is thus $\ccF_n=\ccV_n\cap \ccC\subset \bR^n$.

The parameter space $\ccF_n$ has some rather unique features. Each
point $p\in\ccF_n$ determines a unique flower (that is, unique up to
M\"obius transoformations).  On the other hand, each un-branched
$n$-flower is associated with up to $n$ distinct points $p$, since by
Lemma~\ref{L:cr} one can cyclically permute (the coordinates of)
$p$. We have treated $\{u_1,\cdots,u_{n-3}\}$ as the independent
variables, but in fact, any $n-3$ cyclically successive coordinates
can take on this role. One might wonder about the description of
$\ccC$, defined in terms of inequalities depending on
$u_1,\cdots,u_{n-3}$: most of the individual inequalities in
(\ref{E:constraints}) would fail under cyclic permutation of their
arguments. However, if {\bf all} of the inequalities hold, then
$p\in\ccF_n$ and as a result, each of them individually holds under
cyclic permutation. Likewise, reversing the coordinates of
$p\in\ccF_n$ gives a (generically distinct) point of $\ccF_n$.

\subsection{Univalent flowers}
Among the {\sl un-branched} flowers are the {\sl univalent} flowers,
those whose petals have mutually disjoint interiors. In the study of
discrete analytic functions, univalent flowers are (along with
branched flowers) the most important. Define the subset
$\ccU_n\subset\ccF_n$ to consist of parameters associated with univalent
$n$-flowers.

We develop two collections of inequalities which together
characterize points of $\ccU_n$. The inequalities of this first
collection are very easy to check.
\begin{equation} \tag{A}\label{E:A}
\dfrac{1}{\sqrt{3}}\le u_j\le\dfrac{(n-2)}{\sqrt{3}},\  j=0,\cdots,n-1.
\end{equation}

The second collection (B) of inequalities depends on putting the
flowers in their normalized setting, and checking these takes more work
because a point $p\in \ccF_n$ has $n$ normalized layouts based (as
always) on which petal is designated as ``$c_0$''.  As a result, the
indexing used in laying out a flower --- the indexing occurring in
various formulas --- will generally disagree with the official
indexing of the entries in the given $p$.  We introduce a notational
device to more efficiently state the inequalities.

\vspace{10pt}
{\narrower{\narrower
    \noindent{\bf Notation:} Use $\vec{p}^{\,\,k}$ to indicate a cyclic
    permutation of $p$ which shifts the original coordinate $u_k$ to
    become $u_0$. 

}
}

\vspace{10pt}
\noindent
Note that $p\in \ccU_n$ if and only if $\vec{p}^{\,\,k}\in \ccU_n$ for
all $k=0,\cdots,n-1$. In these inequalities, $r_j(p)$ denotes the radius of
the $j$th petal circle in the normalized layout for $p$.
\begin{equation} \tag{B}\label{E:B}
  r_j(\vec{p}^{\,\,k}) \le 1,\ \forall j=2,\cdots,n-2
  \text{ and }\forall k=0,\cdots,n-1.
\end{equation}

\vspace{5pt}
\begin{Thm} \label{T:univalent}
  Given $n>3$, suppose $p$ represents an un-branched
  $n$-flower, so $p\in\ccF_n$. Then $p$ represents a univalent $n$-flower,
  $p\in\ccU_n$, if and only if $p$
  satisfies the inequalities of (\ref{E:A}) and (\ref{E:B}).
\end{Thm}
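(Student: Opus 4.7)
The plan is to reduce both directions of the equivalence to a single geometric observation about the normalized layout of Section~\ref{SS:flayout}. In the normalization $\vec{p}^{\,k}$, the petal $c_0$ is the half plane $\{y\le -2\}$ while every intermediate petal $c_j$ (for $j\in\{2,\ldots,n-2\}$) is externally tangent to the upper half plane $C$ at $t_j$, with center $(t_j,-r_j)$ and lowest point at $y=-2r_j$. Hence $c_j$ and $c_0$ have disjoint interiors precisely when $r_j\le 1$. That is condition~(\ref{E:B}) interpreted one normalization at a time, and by cyclic symmetry it is exactly the condition that every pair of non-adjacent petals in the original flower has disjoint interiors.

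For sufficiency, I would argue as follows. Adjacent petals have disjoint interiors by the definition of external tangency built into the flower layout. For any pair of non-adjacent petals $c_i,c_j$, I pass to the normalization $\vec{p}^{\,i}$ in which $c_i$ becomes the half plane $c_0$ and $c_j$ becomes an intermediate petal; condition~(\ref{E:B}) gives $r_j\le 1$ in that layout, so the interiors of $c_i$ and $c_j$ are disjoint. This exhausts all pairs and yields univalence.

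For necessity, condition~(\ref{E:B}) is forced by the same half-plane picture, one normalization at a time. The lower bound in~(\ref{E:A}) is extracted from the discussion around \F{base}: in the patch $\fp_j=\{c_j,C\,|\,c_{j-1},c_{j+1}\}$ the critical value $u_j=1/\sqrt{3}$ is exactly where the circle playing the role of $\widehat{C}_s$ becomes tangent to $C_a$, translating in flower language to $c_{j-1}$ and $c_{j+1}$ being tangent; any smaller $u_j$ forces them to overlap, contradicting univalence. The upper bound in~(\ref{E:A}) then combines~(\ref{E:B}) with the Appendix identity $\delta_j=2\sqrt{r_jr_{j+1}}$: in the normalization $\vec{p}^{\,k}$ one has $r_1=r_{n-1}=1$ and $r_j\le 1$ for the rest by~(\ref{E:B}), so each $\delta_j\le 2$, giving $t_{n-1}=\sum_{j=1}^{n-2}\delta_j\le 2(n-2)$; invoking the relation $t_{n-1}=2\sqrt{3}\,u_0$ supplied by equation~(\ref{E:S1}) then yields $u_k\le (n-2)/\sqrt{3}$.

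The main obstacle is not any single calculation but the bookkeeping across the $n$ cyclic relabelings: one must consistently translate between statements about the coordinates of $p$ under its canonical indexing and statements about petals in the shifted layout $\vec{p}^{\,k}$, and verify that as $k$ ranges over $\{0,\ldots,n-1\}$ the index range $\{2,\ldots,n-2\}$ in~(\ref{E:B}) really does sweep out every non-adjacent counterpart of every petal. Once this indexing is set up cleanly, the whole theorem collapses to the single half-plane inequality $r_j\le 1$ together with the elementary displacement bound $\delta_j\le 2$.
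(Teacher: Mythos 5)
Your proof is correct and rests on the same core mechanism as the paper's: run through the $n$ normalized layouts, read off overlaps with the half plane $c_0$ from the criterion $r_j\le 1$, and use (\ref{E:S1}) to convert the position of $t_{n-1}$ into a bound on $u_0$. There are, however, two places where your route genuinely diverges, both defensible and arguably cleaner. First, in the sufficiency direction the paper splits an overlapping pair into two cases: petals separated by exactly one intermediate petal (handled by the lower bound in (\ref{E:A}), since the two end circles of the appropriate normalization overlap precisely when $t_{n-1}=2\sqrt{3}\,u_0<2$) versus petals separated by at least two (handled by (\ref{E:B})). You instead normalize so that one member of the offending pair itself becomes the half plane, which folds the one-petal-between case into (\ref{E:B}) as the instance $j=2$; this uniform treatment shows that (\ref{E:B}) alone already characterizes univalence within $\ccF_n$, with (\ref{E:A}) a logically redundant (though cheap-to-check) consequence. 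Second, for the upper bound in (\ref{E:A}) the paper appeals to the picture of the extremal flower (``it is clear that the largest $t_{n-1}$ occurs\dots''), whereas you derive $t_{n-1}\le 2(n-2)$ from $\delta_j=2\sqrt{r_jr_{j+1}}\le 2$ and telescoping, which is a genuine tightening. The bookkeeping worry you flag is real but benign: the pairs $\{c_k,c_{k+j}\}$ with $j\in\{2,\dots,n-2\}$ and $k\in\{0,\dots,n-1\}$ exhaust (twice over) exactly the non-adjacent pairs, so nothing is missed.
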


\begin{proof}
  \noindent{\sl Necessity:} Suppose $p\in\ccU_n$.  We observed in
  discussing \F{base} that if $s>(1-\frac{1}{\sqrt{3}}$, then
  $\widehat{C}_s$ would overlap $C_a$, contradicting univalence.  This
  gives the lower bound of (A). The upper bound depends on
  $n$. Consider the tangency point $t_{n-1}$ in the normalized
  flower. It is clear that the largest $t_{n-1}$ could be for a
  univalent flower occurs for ``extremal'' flowers like that
  illustrated (for $n=7$) in \F{four}(d). In this case,
  $t_{n-1}=2(n-2)$. Since our flower is univalent, $t_{n-1}\le
  2(n-2)$. On the other hand, by (\ref{E:S1}),
  $2\sqrt{3}u_0=t_{n-1}$. We conclude that
  $u_0\le\frac{(n-2)}{\sqrt{3}}$.  Lemma~\ref{L:cr} tells us that any
  $u_j$ can be treated as $u_0$, so $u_j\le\frac{n-2}{\sqrt{3}}$. 

  We prove (\ref{E:B}) by constradiction: suppose (\ref{E:B}) fails
  for some $k$ and $3\le j\le n-1$. Then in the
  shifted indexing of $\vec{p}^{\,\,k}$, the petal circle $c_j$, having
  radius greater than 1 would necessarily overlap the half plane
  $c_0$, contradicting univalence. We
  have established the necessity of (\ref{E:A}) and (\ref{E:B}).

  \vspace{10pt}
  \noindent{\sl Sufficiency:} Suppose the point $p\in\fF_n$ satisfies
  (\ref{E:A}) and (\ref{E:B}) but its flower is not univalent.  That
  is, suppose there exists some pair $c_j,c_k$ of petal circles that
  overlap, $0\le j<k\le n-1$.  Suppose first that there is a single
  petal between $c_j$ and $c_k$, so $k=j+2$. In the normalized layout
  for the shifted point $\vec{p}^{\,\,j}$, the two end circles of the
  layout would overlap, implying by (\ref{E:S1}) that
  $2\sqrt{3}\,u_j<2$ and hence $u_j<1/\sqrt{3}$, violating
  (\ref{E:A}).  On the other hand, if $c_j$ and $c_k$ are separated by
  at least two petals, then with the new indexing of
  $\vec{p}^{\,\,j}$, the petal $c_j$ becomes $c_0$, and $c_k$ becomes
  a petal strictly between $c_1$ and $c_{n-1}$. Since that petal
  overlaps $c_0$, its radius must exceed 1, contradicting
  (\ref{E:B}). This completes the proof of sufficiency.
\end{proof}

In practice, (\ref{E:A}) is trivial to check, while (\ref{E:B}) takes
most of the work. Although the normalized flowers for shifts
$\vec{p}^{\,\,k}$ are all M\"obius images of one another, one must
still check (\ref{E:B}) for each of the normalizations in turn. In a
given normalization, one can use the quasi-recursive expression
(\ref{E:qr}) along with the fact that $r_0=\infty$ and $r_1=1$ to
quickly see if an instance of (\ref{E:B}) is violated.

\subsection{Branched Flowers} \label{SS:brflow}
Now we address the more difficult setting of branched flowers, which
are essential in the study of discrete analytic functions. \F{br_ex_1}
and \F{br_ex_2} provide examples.

\begin{figure}[h]
\begin{center}
\begin{overpic}[width=.98\textwidth
  ]{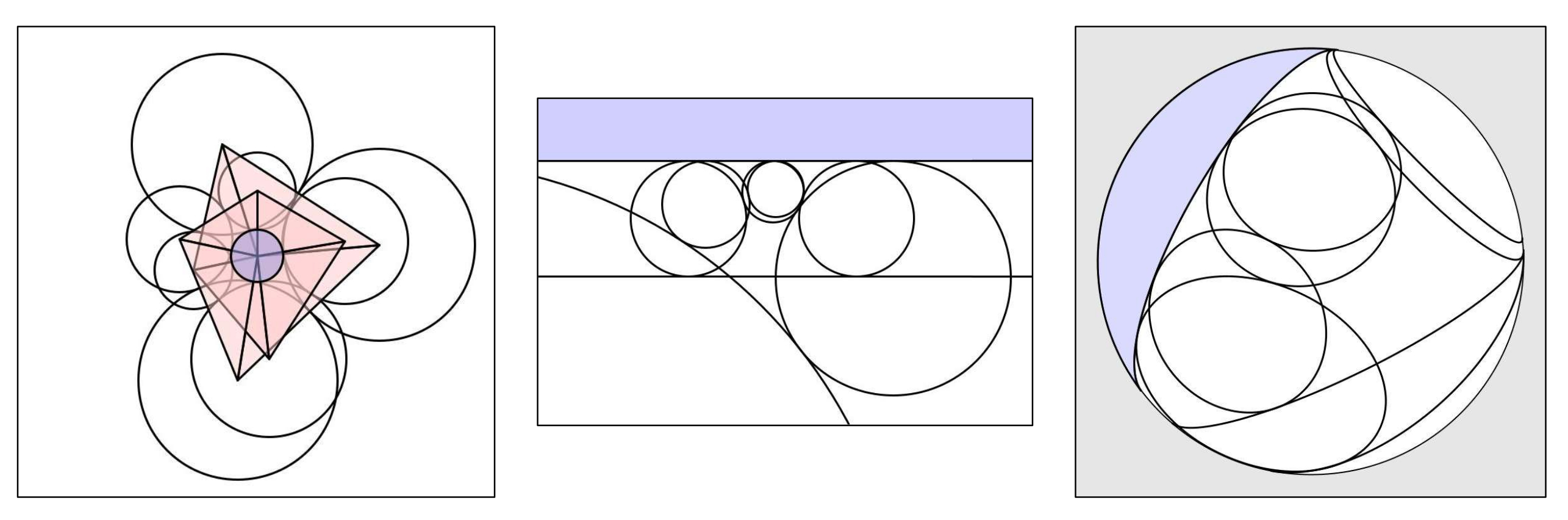}
\end{overpic}
\end{center}
\caption{Images of a simply branched 8-flower}
\label{F:br_ex_1}
\end{figure}

\F{br_ex_1} has three images of the same branched 8-flower
under different M\"obius transformations: one euclidean, one
normalized, and one on $\bP$. Flowers laid out using schwarzians
can easily end up with petals enclosing $\infty$, as on the right;
visualization on the sphere is a slippery business.

\begin{figure}[h]
\begin{center}
\begin{overpic}[width=.75\textwidth
  ]{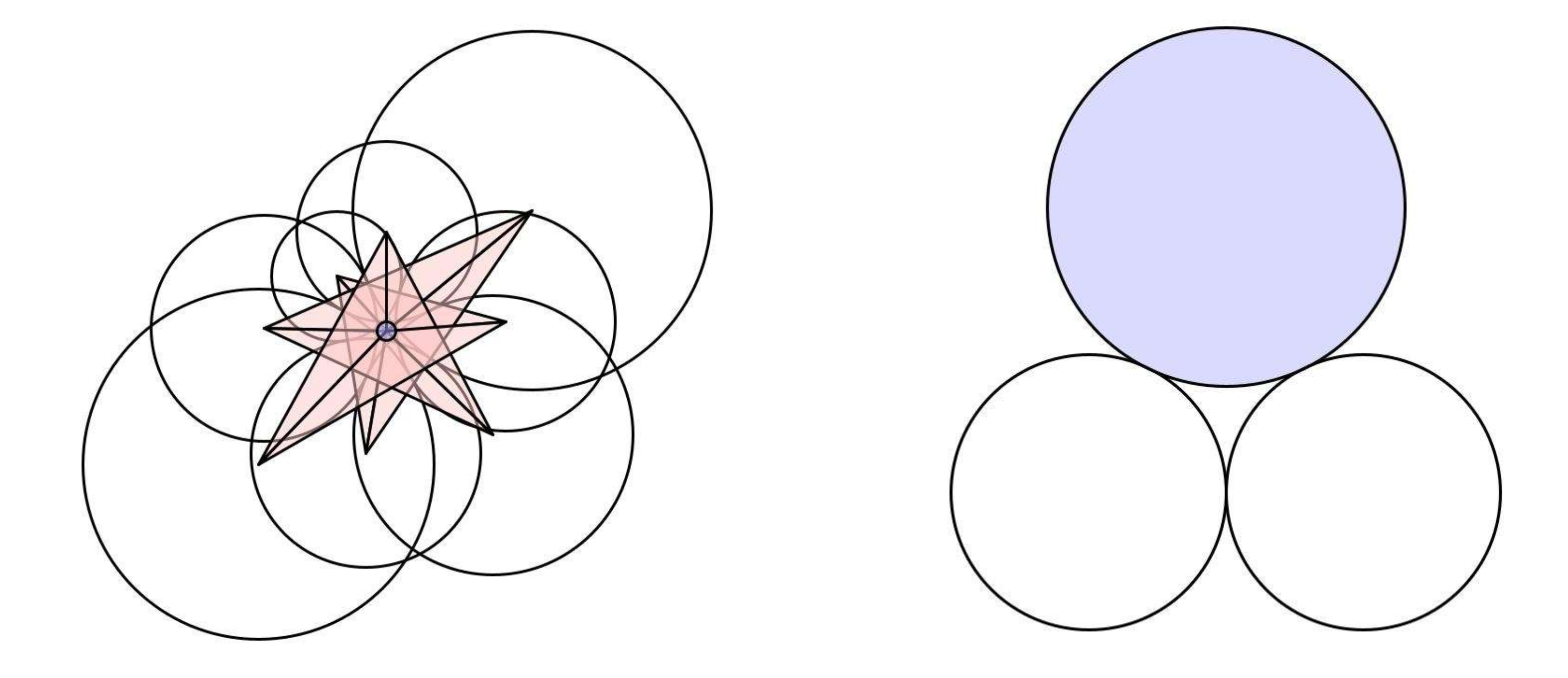}
  \put (8,40) {(a)}
  \put (60,40) {(b)}
  \put (64,8) {$c_1\&c_3$}
  \put (85,8) {$c_2\&c_0$}
  \put (71,33) {$C$}
\end{overpic}
\end{center}
\caption{Two branched flowers}
\label{F:br_ex_2}
\end{figure}

\F{br_ex_2}(a) illustrates a typical flower with branching of order two,
its 8 petals (and 8 faces) wrapping three times around $C$.  \F{br_ex_2}(b), on
the other hand, is a cautionary example: it displays a branched
4-flower. What constitutes a flower depends on context: Are extraneous
tangencies allowed? Must the flower be able to live in a larger circle
packing? Etc. The standard requirement is that a flower wrapping $k$
times about its center will require $n\ge(2k+1)$ petals. This would
hold if we required schwarzians $s$ strictly less than 1 (i.e.,
$u>0$). In light of Lemma~\ref{L:4f}, the 4-flower of \F{br_ex_2}(b) is a
limit: let $u\downarrow 0$ so $u'=2/(3u)\uparrow \infty$. The petals
$c_1$ and $c_3$ are identical, as are $c_2$ and $c_0$. This is not a
situation that would occur in the practice of discrete function
theory.

\vspace{10pt}
The adjustments in our machinery to accommodate branching are
contained in (\ref{E:S4}). Given $n-3$ parameters, we are
still able to compute the remaining three to form a packing label.
Indeed, we can still write $u_{n-2}=\fU_n(u_1,\cdots,u_{n-3})$, but we
must accept the function $\fU_n(\cdots)$ as representing an {\bf algorithm}
rather than an explicit formula.

\vspace{10pt} How does this fit in the broader enterprise of finding
packing edge labels $S$ for more general complexes $K$? In
\S\ref{SS:dculty}, ``criteria'' and ``monotonicities'' were identified
as valuable ingredients. To check an edge label $S$ for $K$, visit
each interior vertex and confirm the expressions for its schwarzians
involving the appropriate $\fU_n$. This is cumbersome even for an un-branched
$n$-flower, as one has to check several constraint functions $\fC_j$ and
apply $\fU_n$ three times. Moreover, if the flower is branched, $\fU_n$
has no explicit formula but rather requires computations involving
normalization. Despite the computational challenge, we do have packing
criteria so that we can check labels $S$ on $K$.

The outlook for monotonicities in $S$ remains quite cloudy.  An
adjustment of the label for one edge typically affects two
flowers. Judging whether this adjustment is favorable in moving us
toward a packing label is very challenging. This is particularly the
case when branching is involved; without explicit expressions, we
can't use classical tools such as differentials. This is unfortunate
as branched packings are among our key targets.

\vspace{10pt} In conclusion, though branched packing are handled quite
routinely in the disc or the plane, where the controlling parameters
are radii and angle sums, the sphere is a tougher environment. Whether
there is some way to deploy the properties we have developed for
flowers to the computational hurdles with larger packings remains an
open question.

\section{Special Classes of Flowers} \label{S:sclass}
In this final section, we consider five distinguished families of
flowers whose schwarzians can be computed explicitly. Examples are
shown in their native environments in \F{specials}. Once again, the
author would point to the intriguing relations that emerge in these
beautiful but elementary cases.

\begin{figure}[h]
\begin{center}
\begin{overpic}[width=.9\textwidth
  ]{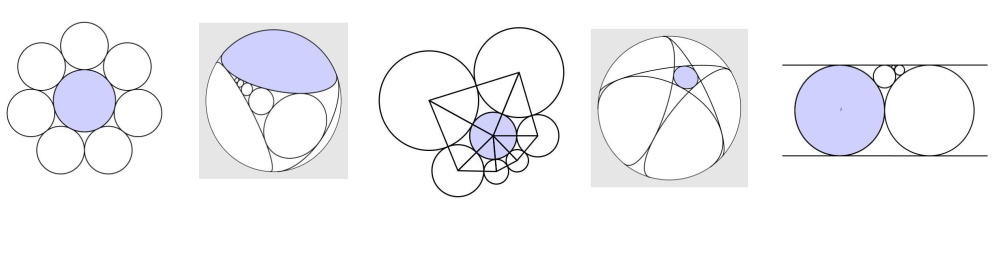}
  \put (2,4.5) {Uniform}
  \put (21.5,4.5) {Extremal}
  \put (60,4.5) {Soccerball}
  \put (58,1) {(branched, $\bP$)}
  \put (44.5,4.5) {Doyle}
  \put (84.5,4.5) {Ring}
\end{overpic}
\caption{Examples of special flower classes}
\label{F:specials}
\end{center}
\end{figure}

\vspace{10pt}
\subsection{Uniform Flowers} \label{SS:uniform}
There is a unique ``uniform'' univalent $n$-flower for each $n$.  The
model $n$-flower might be a euclidean one whose $n$ petals all share
the same radius, as in \F{specials}. These are a natural, unbiased
starting point when first encountering flowers, and deviations from
uniformity can be useful (even in computations; see the ``uniform
neighbor model'' of \cite{CS03}).  \F{uniform} illustrates several
uniform flowers in our normalized form. Symmetry insures that all $n$
schwarzians are identical and one can observe reflective symmetry in
the layouts.

\begin{figure}[h]
\begin{center}
\begin{overpic}[width=.9\textwidth
  ]{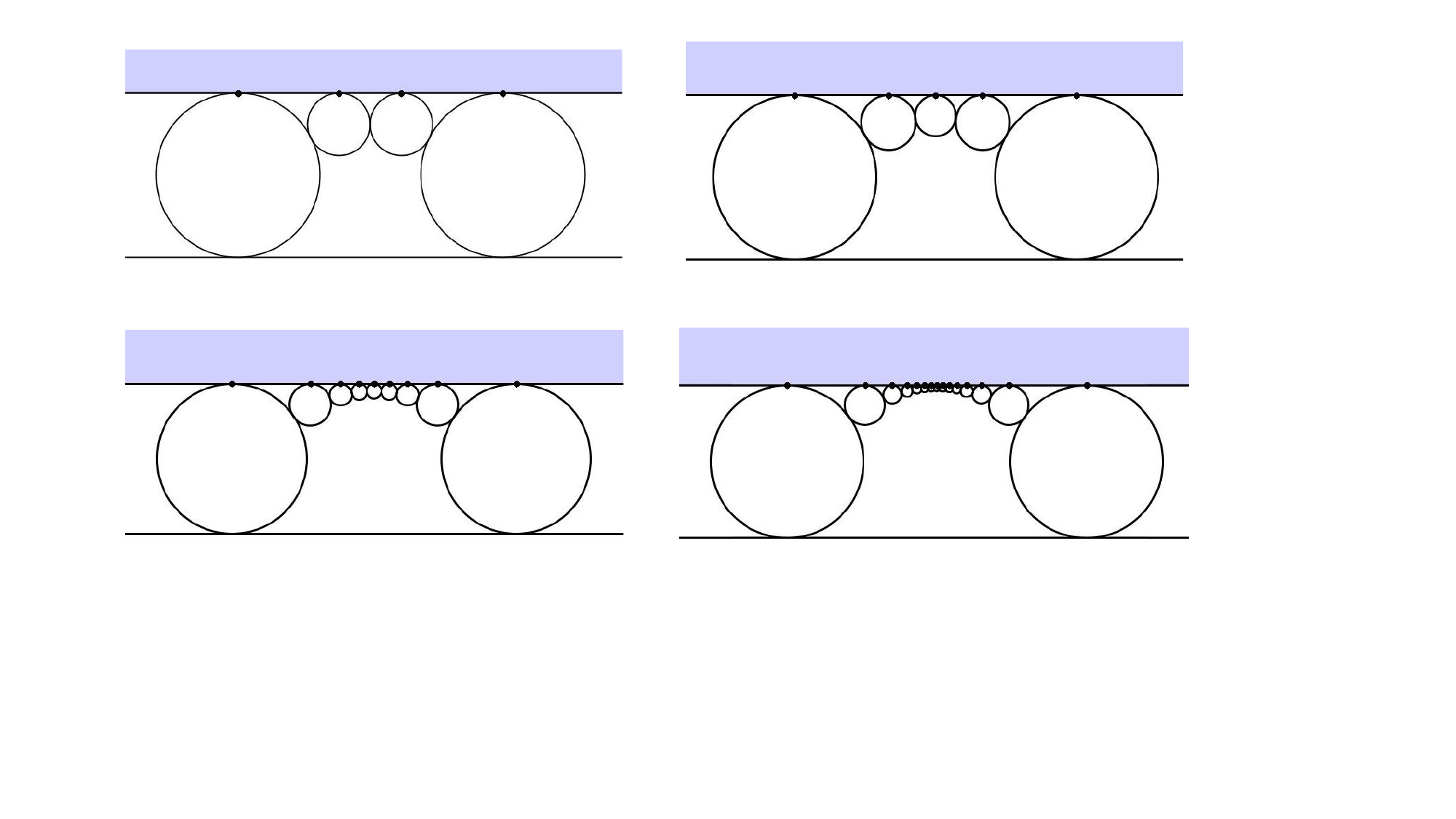}
  \put (20,35.5) {5-flower}
  \put (59,35.5) {6-flower}
  \put (19.5,16) {10-flower}
  \put (58.5,16) {16-flower}
\end{overpic}
\caption{Samples of ``uniform'' flowers}
\label{F:uniform}
\end{center}
\end{figure}

\vspace{10pt} There is some regularity that your eye may pick up on
here. The next figure explains that feeling: in every case all $n$
petals are tangent to a common circle, shaded here. In the model
euclidean setting this circle is the one circumscribing the
flower. (Note, conversely, if such a circle tangent to all the petals
exists, then the flower is uniform.)

\begin{figure}[h]
\begin{center}
\begin{overpic}[width=.5\textwidth
  ]{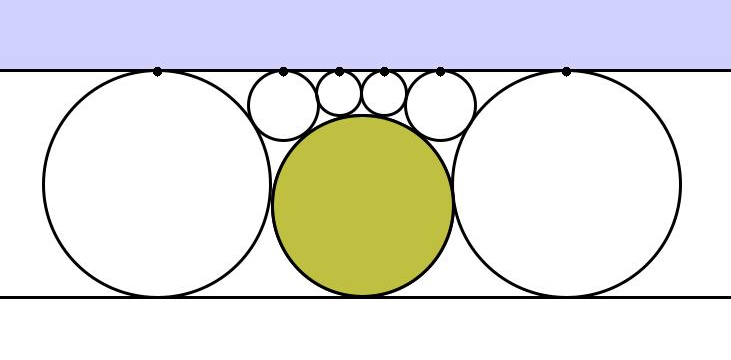}
\end{overpic}
\caption{The hidden disc}
\label{F:unif_disc}
\end{center}
\end{figure}

\vspace{10pt} The key question, of course: ``What is the schwarzian
for a uniform $n$-flower?'' We will label this value as $\fs_n$. Three
cases are already known: $\fs_3=1-1/\sqrt{3}$, $\fs_4=1-\sqrt{2/3}$
(from Lemma~\ref{L:4f}), and $\fs_6=0$.  In \S\ref{SS:A2} of the
Appendix we establish the closed form
$\fs_n=1-\frac{2}{\sqrt{3}}\cos(\pi/n)$. Here is a sampling of values:
\begin{align} \notag
  \fs_3=&\ 1-1/\sqrt{3}\sim 0.422650\notag &{\ }
  \fs_4=&\ 1-\sqrt{2/3}\sim 0.183503\notag \\
  \fs_5\sim&\ 0.065828\notag &{\ }
  \fs_6=&\ 0\notag \\
  \fs_9\sim&\ -0.085064\notag &{\ }
  \fs_{12}\sim&\ -0.115355\notag \\
  \fs_{20}\sim&\ -0.140485\notag &{\ }
  \fs_{50}\sim&\ -0.152422\notag 
\end{align}
The same computations work for branched uniform flowers, giving the
more general formula in \S\ref{SS:A2}(\ref{E:alpha}), which will be
used in \S\ref{SS:soccer}.

\vspace{10pt}
\subsection{Extremal flowers} \label{SS:extremal}
There is a unique ``extremal'' univalent $n$-flower for each
$n$. Indeed, suppose one of the schwarzians $s$ of a univalent
$n$-flower equals the lower bound $1-(n-2)/\sqrt{3}$. Designating that
as the petal $c_0$, (\ref{E:S1}) implies that the normalization
process can only lead to a flower like that of \F{extremal}(a) (for
$n=7$).

\begin{figure}[h]
\begin{center}
\begin{overpic}[width=.9\textwidth
  ]{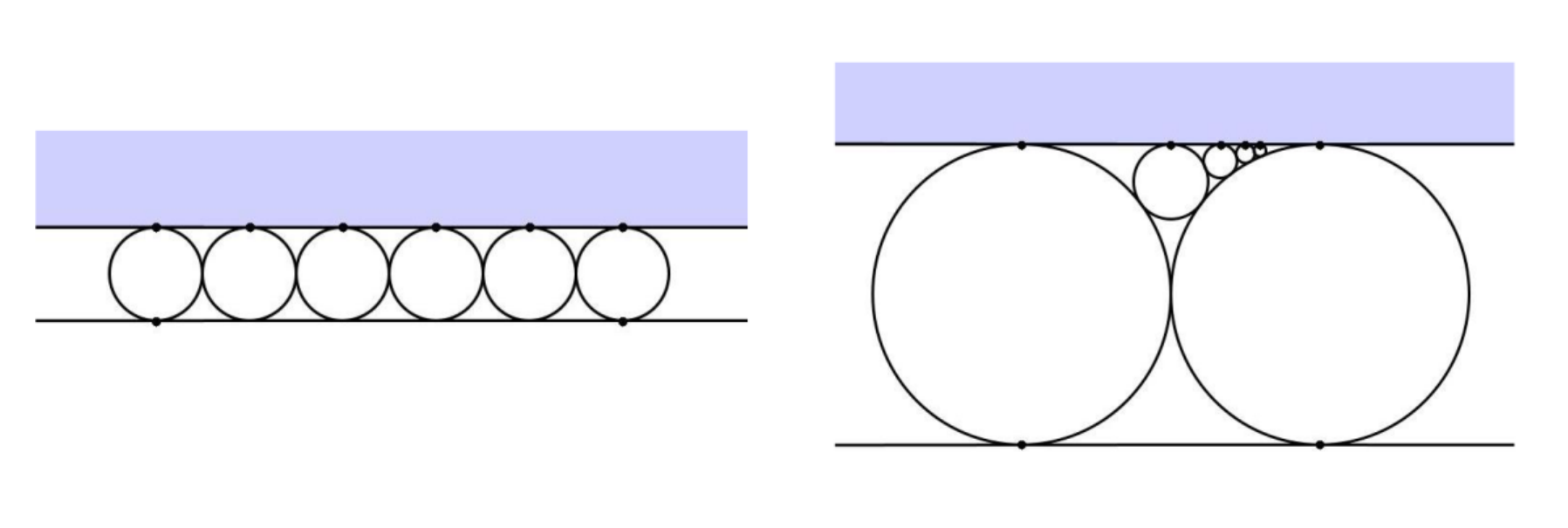}
  \put (5,21.5) {$C$}
  \put (57,26) {$C$}
  \put (9,16) {$c_1$}
  \put (39,16) {$c_6$}
  \put (5,10) {$c_0$}
  \put (55,2) {$c_0$}
  \put (65,15) {$c_1$}
  \put (84.5,15) {$c_6$}
  \put (15.2,12.35) {$\bullet$}
  \put (21.35,12.35) {$\bullet$}
  \put (27.25,12.35) {$\bullet$}
  \put (33.2,12.35) {$\bullet$}
  \put (75.85,20.1) {$\bullet$}
  \put (77.6,21.75) {$\bullet$}
  \put (78.75,22.5) {$\bullet$}
  \put (79.75,22.9) {$\bullet$}
  \put (7,-2) {(a) Extremal normalization}
  \put (58,-2) {(b) Alternate normalization}
\end{overpic}
\caption{Normalizations of an ``extremal'' univalent flower}
\label{F:extremal}
\end{center}
\end{figure}

\vspace{10pt} As can be seen in this normalization, extraneous
tangencies allow $c_1$ and $c_{n-1}$ to serve as centers of 3-flowers,
implying that their schwarzians $s_1,s_{n-1}$ both equal
$\fs_3=1-1/\sqrt{3}$.  Next, consider any of the remaining petals
$c_j, j=2,\cdots,n-2$.  First, $c_j$ acts as the center of a 3-flower
if we include just one of its horizontal neighbors $c_{j+1}$ or
$c_{j-1}$ along with the two half planes. As part of this 3-flower,
the horizontal edge to the neighbor has schwarzian
$1-1/\sqrt{3}$. Note at the same time, that $c_j$ acts as the center
of a 4-flower if you throw in both its horizontal neighbors. Knowing
the horizontal schwarzians, we can apply Lemma~\ref{L:4f} to compute
the schwarzian $1-2/\sqrt{3}$ for the vertical edges. In particular,
we have the following
\begin{Lem}
  For every $n\ge 3$ there is a unique univalent extremal $n$-flower,
  and its set of schwarzians is given by
  \begin{equation} \notag
    \{\frac{\sqrt{3}-1}{\sqrt{3}},\ \ \frac{\sqrt{3}-2}{\sqrt{3}},
    \ \ \frac{\sqrt{3}-2}{\sqrt{3}}, 
    \ \cdots,\ \frac{\sqrt{3}-2}{\sqrt{3}},
        \ \ \frac{\sqrt{3}-1}{\sqrt{3}},\ \ \frac{(n-2)}{\sqrt{3}}\}.
  \end{equation}
\end{Lem}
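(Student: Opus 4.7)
The plan is to separate the claim into a uniqueness statement and a computation of the individual schwarzians, both of which reduce to the two lemmas already proved (Lemmas~\ref{L:3f} and \ref{L:4f}).

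For \emph{uniqueness}, I would start from the observation made in the paragraph preceding the lemma: the bound (\ref{E:A}) of Theorem~\ref{T:univalent} forces every schwarzian of a univalent $n$-flower to satisfy $s\ge 1-(n-2)/\sqrt{3}$. If one edge attains this lower bound, designate its petal as $c_0$; then by (\ref{E:S1}) the tangency point $t_{n-1}$ equals exactly $2(n-2)$, its univalent maximum. Since a normalized univalent $n$-flower with $t_{n-1}$ maximal must lie in the strip between $y=0$ and $y=-2$ with all petals tangent to the real axis, the petals are forced to be the unit discs centered at $(2j-2, -1)$ for $j=1,\dots,n-1$, reproducing Figure~\ref{F:extremal}(a). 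This rigid configuration gives both existence and uniqueness.

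For the \emph{schwarzians}, I read them off from the forced configuration. The value $s_0=1-(n-2)/\sqrt{3}$ is immediate from (\ref{E:S1}) applied with $t_{n-1}=2(n-2)$. For the two end petals $c_1$ and $c_{n-1}$, observe that each has exactly three flower-neighbors ($C$, $c_0$, and the horizontal neighbor), and these three form a closed triangle of mutually tangent circles, i.e.\ a genuine 3-flower centered at $c_1$ (respectively $c_{n-1}$). Applying Lemma~\ref{L:3f} to the edge $\{C,c_1\}$ of this 3-flower gives $s_1=1-1/\sqrt{3}$, and similarly $s_{n-1}=1-1/\sqrt{3}$.

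For the interior schwarzians $s_j$ with $2\le j\le n-2$, I exploit the extraneous tangency of $c_j$ with $c_0$: the four circles $\{C, c_{j-1}, c_0, c_{j+1}\}$ are all tangent to $c_j$ and are arranged in a closed cyclic chain around it, so they constitute a genuine (un-branched) 4-flower centered at $c_j$. Lemma~\ref{L:4f} says the schwarzians of such a 4-flower alternate as $\{s,s',s,s'\}$ with $(1-s)(1-s')=2/3$, and opposite edges share a common value. The two horizontal edges $\{c_j,c_{j-1}\}$ and $\{c_j,c_{j+1}\}$ are each shared with the 3-flowers centered at $c_{j-1}$ and $c_{j+1}$ (built exactly as above, including $C$, $c_0$, and $c_j$), so both horizontal schwarzians equal $1-1/\sqrt{3}$, giving $u=1/\sqrt{3}$. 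The relation $uu'=2/3$ then yields $u'=2/\sqrt{3}$, i.e.\ $s'=1-2/\sqrt{3}$, which is the schwarzian shared by the vertical edges $\{c_j,C\}$ and $\{c_j,c_0\}$. Hence $s_j=1-2/\sqrt{3}$ for $2\le j\le n-2$.

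The one subtle step, and the only place that requires care, is verifying that the local 4-flower at $c_j$ really is an un-branched 4-flower in the combinatorial sense needed for Lemma~\ref{L:4f}: the four neighbors appear cyclically in the orders $C, c_{j-1}, c_0, c_{j+1}$, and the extraneous tangencies of $c_j$ with $c_0$ are what make the chain close. Once this is in place, the opposite-edge pattern of Lemma~\ref{L:4f} unambiguously pairs the two horizontal edges against the two vertical edges, and the computation above completes the proof.
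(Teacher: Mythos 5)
Your proof is correct and follows essentially the same route as the paper: force the rigid normalized configuration from the extremal value of $s_0$ via (\ref{E:S1}), then read the remaining schwarzians off the extraneous tangencies by recognizing 3-flowers (Lemma~\ref{L:3f}) at the end petals and along the horizontal edges, and alternating 4-flowers (Lemma~\ref{L:4f}) at the interior petals. Your computed value $s_0=1-(n-2)/\sqrt{3}$ is the right one; the final entry $\frac{(n-2)}{\sqrt{3}}$ in the lemma's displayed list appears to be a typo for $\frac{\sqrt{3}-(n-2)}{\sqrt{3}}$.
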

\noindent \F{extremal}(b) shows a alternate normalization of
the same extremal 7-flower, hence with a shifted list of the same
schwarzians.

\vspace{10pt}
\subsection{Soccerball Flowers} \label{SS:soccer}
We go into detail about the soccerball circle packings discussed in
\S\ref{SS:bdaf} and displayed there in \F{merom}(a). The highly
symmetric nature of these packings allows us to calculate their
intrinsic schwarzians explicitly --- a rare opportunity.

The construction of the soccerball packings are described more
fully in \cite{kS05}. Briefly, the
complex $K$ has 42 vertices, 12 of degree~5 and the rest of
degree~6. The packing $P_K$ on the left in \F{merom}(a) is the
maximal packing for $K$ and is called the soccerball packing because
its dual faces form the traditional soccer ball pattern, breaking the
sphere into 5 and 6-sided polygonal regions.  That on the right, $P$,
is a branched packing for $K$, with simple branching at each of the
degree~5 vertices. The mapping $F:P_K\rightarrow P$ is a key instance
of a discrete rational function.

The ubiquitous symmetries within $K$, $P_K$, and $P$ allow one to
establish these facts: (a) $K$ has only two types of edges, those with
ends of degrees~5 and 6 and those whose ends are both degree~6.  (b)
In each of $P_K$ and $P$, all circles of degree~5 share one radius,
while all of degree~6 share another. (c) These facts imply that in
each of $P_K$ and $P$, there are only two intrinsic schwarzians: $s$
for edges of degree~5 flowers and $s'$ between degree~6 circles.  (d)
And finally, it follows that the degree~5 flowers are uniform while
the degree-6 schwarzians take the alternating pattern
$\{s,s',s,s',s,s'\}$

Working in $P_K$ first, (d) above and (\ref{E:S4}) implies
$u=1-s=\frac{2}{\sqrt{3}}\cos(\pi/5)$. It remains to compute
$u'=1-s'$. However, it turns out we can work more generally.  Consider
any univalent degree~6 flower whose label has the alternating form
$\{u,u',u,u',u,u'\}$. Examples are shown in \F{spec6}.

\begin{figure}[h]
\begin{center}
\begin{overpic}[width=.8\textwidth
  ]{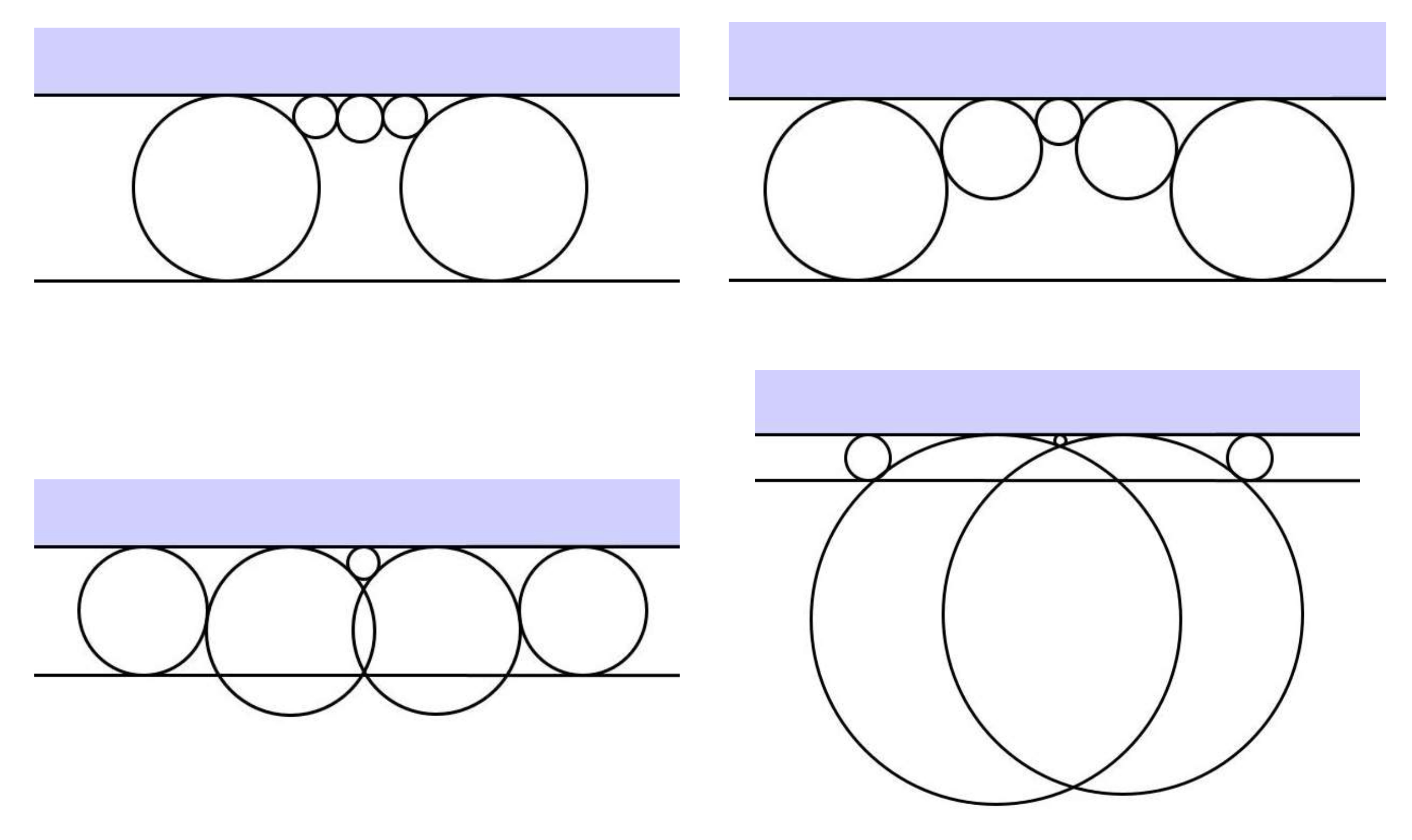}
  \put (15,35) {(a) $u=1.20650...$}
  \put (65,35) {(b) $u=0.77700...$}
  \put (15,4) {(c) $u=0.66666...$}
  \put (65,-2) {(d) $u=0.20000...$}
\end{overpic}
\caption{6-flowers with alternating schwarzians}
\label{F:spec6}
\end{center}
\end{figure}

Applying $\fU_6$ and some simplification, we have
\begin{equation} \label{E:uup}
  u=\fU_6(u',u,u')\Longrightarrow
  u=\dfrac{u'u}{3u'uu'-u'-u'}\Longrightarrow
  uu'=1.
\end{equation}
Surprise: $uu'=1$. In our particular case, we conclude for $P_K$ that
\begin{align} \label{E:PKss}
  &s=1-\frac{2}{\sqrt{3}}\cos(\pi/5)\sim 0.065828 \\
  &s'=1-\frac{\sqrt{3}}{2}\sec(\pi/5)\sim -0.070466. \notag
\end{align}

Another feature of these special degree~6 flowers might catch your eye
in \F{spec6}(a), (b), and (c): one can show that for any $s\le\fs_3$,
a normalized flower with label $\{s,s',s,s',s,s'\}$ will lead to
circle $c_4$ having radius $r_4=1/4$. The flower in \F{spec6}(d) shows
that this fails for non-univalent cases: when $s>\fs_3$, $c_3$ and
$c_5$ overlap.  (Incidently, when $s$ is precisely equal to $\fs_3$,
$c_3$ and $c_5$ are tangent and the value 1/4 for $c_4$ comes directly
from the Descartes Circle Theorem.)

\vspace{10pt} We turn now to the branched packing $P$, on the right
in \F{merom}(a). Each of
the 12 degree flowers is branched, so the Riemann-Hurwitz relations
imply that $P$ is a 7-sheeted covering of the sphere. The packing is
very difficult to interpret: the degree~5 circles are smaller now, but
the degree~6 are quite huge --- nearly hemispheres. This and the seven
sheetedness make individual degree~6 circles very hard to distinquish
in \F{merom}(a), so an isolated 5-flower is shown in \F{specials}. Many
facts about $P_K$ persist in $P$: as before there are just two
schwarzians, $s,s'$; the degree~5 vertices have uniform flowers; and
$s,s'$ alternate in the degree~6 flowers. The normalized layout for
one of these 6-flowers is shown in \F{spec6}(d).

We can now compute $u=1-s$ using the general expression
(\ref{E:alpha}) of the Appendix. A simple branched 5-flower wraps
twice around its central circle; this means for a uniform flower that
each face subtends an angle $\theta=4\pi/5$, implying that
$\alpha=\theta/2=2\pi/5$.  Using (\ref{E:alpha}) and then
(\ref{E:uup}) we conclude for $P$ in \F{merom}(a),
\begin{align} \label{E:Pss}
  &s=1-\frac{2}{\sqrt{3}}\cos(2\pi/5)\sim 0.643178  \\
  &s'=1-\frac{\sqrt{3}}{2}\sec(2\pi/5)\sim -1.802517.\notag
\end{align}

As a final comment, we observe that for this very special complex $K$,
our analysis extends to other pairs of schwarzians $s,s'$ satisfying
(\ref{E:uup}), meaning such that $(1-s)(1-s')=1$. One obtains a family
of projective circle packings $P_s$ which live on coned spheres. I am
not prepared to address the range of possible values --- an
interesting question in itself --- but one can choose $s$ to
interpolate between (\ref{E:PKss}) and (\ref{E:Pss}) and certainly to
extend beyond that range. For each appropriate $s$, the process
described in \S\ref{S:layout}, starting with an arbitrary mutually
tangent triple of circles for some base face and then using the
schwarzians to lay out the remaining circles, generates a circle
packing $P_s$. Only when $s$ takes its value from (\ref{E:PKss}) or
(\ref{E:Pss}) (i.e., $P_s=P_K$ or $P_s=P$, respectively) is $P_s$ a
traditional circle packing on the Riemann sphere $\bP$. In all other
cases, the face-by-face construction produces a topological sphere
$\bP_s$ with spherical geometry, save for the 12 points associated
with degree~5 vertices. These are clearly 12 cone points. The symmetry
group of $K$ is the dodecahedral group, so there exists a M\"obius
transformation $\cP_s$ making the singularities indistinguishable,
that is, they all share a common cone angle $\gamma$. To illustrate,
if $s=-0.321284$, then {\tt CirclePack} tells us that
$\gamma=3\pi$. It is left to the curious reader to work out the
precise relationship between $s$ and $\gamma$.

Incidently, imposing symmetry was necessary here since cone angles are
subject to change under M\"obius transformations. Irrespective
of the construction of $P_s$ (i.e., of the initial face $f_0$), the
traditional angle sums $\theta_v$ at all vertices of degree~6 will be
$2\pi$. However, the angle sums at the degree~5 vertices may no longer
share a common value. The only exceptions are our two special cases:
when $s$ takes the value in (\ref{E:PKss}) or (\ref{E:Pss}), then the
degree~5 vertices have angle sums $\gamma=2\pi$ or $4\pi$,
respectively, regardless of the choices of $f_0$ in the
construction. I personally find this curious --- this persistence of
angle sums is nearly a packing criterion.

\vspace{10pt}
\subsection{Doyle Flowers} An early and fascinating chapter in the
story of circle packing involves a pattern for hex (degree 6) flowers
observed by Peter Doyle. We investigate this two-parameter family of
flowers here, but the interested reader can discover the beauty of the
``Doyle spirals'' that they lead to in \cite{BDS94}. In addition to
providing an obvious instance of a discrete exponential function,
these spirals raised the oldest question from the foundational period
that remains open, posed by Peter Doyle: {\sl Do there exist any circle
packings of the plane in which every circle has degree 6 other than
the familiar ``penny packing'', in which all circles share a common
radius, and the (coherent) Doyle spirals?}

The Doyle flowers involve two radius parameters, $a$ and $b$. If the
center $C$ is radius 1, then the petal radii take the form
\begin{equation} \label{E:doyleradii}
  \{a,b,b/a, 1/a,1/b,a/b\},\ \ a, b > 0.
\end{equation}
Remarkably, regardless of $a$ and $b$, this pattern of radii will
always form a 6-flower around $C$. More significant in the search for
schwarzians is the fact that the six triangular faces of that flower
fall into three similarity classes. For each $j=1,\cdots,6$, let $e_j$
be an edge, $f_j$ and $g_j$ the neighboring faces, and $\fp_j$ the
patch formed by their union. For each $j=1,2,3$, one can check that
there is a similarity $\Lambda:\fp_j\longrightarrow \fp_{j+3}$ with
$\Lambda(e_j)=e_{j+3}$. As a consequence, the sequence of schwarzians
takes the form
\begin{equation} \label{E:doylesch}
  \{s_1,s_2,s_3,s_1,s_2,s_3\}.
\end{equation}

View this pattern in the light of what we know about general
6-flowers: namely that $u_1=\fU_6(u_1,u_2,u_3)$; solving for
$u_3$ gives
\begin{equation} \label{E:doyle_eq}
  u_3=\dfrac{u_1+u_2}{3u_1u_2-1}.
\end{equation}
In other words, using $u_1,u_2$, we have a new 2-parameter
representation of the Doyle flowers in the space $\fF_6$.

It would be interesting to find the relationship between parameters
$a,b$ and $u_1,u_2$ (or $s_1,s_2$). However, there are more
challenging questions that the reader might like to take on. First,
sticking with the Doyle setting for a moment, note that the pattern of
a single Doyle flower propogates to an infinite spiral, all of whose
flowers share the identical set of schwarzians. The combinatorics
underlying all Doyle spirals is that of the hexagonal lattice $H$,
easily recognized as the planar lattice behind the penny
packing. Within $H$ are three families of parallel axes. What the
results above show is that, for a given Doyle spiral, all edges within
one of these family share the same schwarzian.

The challenge now is to conceive of other conditions on schwarzians
analogous to those of (\ref{E:doyle_eq}). What patterns, what families
of flowers, might emerge? In addition, are there patterns for flowers
that automatically propogate to larger, perhaps infinite,
configurations of circles? Examples might contribute to discrete
function theory as Doyle spirals have contributed discrete exponential
functions.

\vspace{10pt}
\subsection{Ring Lemma Flowers} \label{SS:ring}
In circle packing, the well known and important ``Ring Lemma''
provides a lower bound $c(n)$ for the ratio $r/R$ of petal radii $r$
to the center's radius $R$ in any univalent euclidean
$n$-flower. First introduced in \cite{RS87}, the extremal situations
and sharp constants were obtained in \cite{lH88} and were shown to be
reciprocal integers in \cite{dA97}. Of course, we are focused on
schwarzians not radii, so we work in our normalized setting. \F{ring}
suggests how the extremal normalized flowers develop in nested
fashion, with the extremal $n+1$-flower being obtained from the
extremal $n$-flower by adding the largest possible circle to its
smallest interstice. Continuing this {\sl ad infinitum}, we arrive at
what we might term an $\infty$-flower.

\vspace{10pt}
\begin{figure}[h]
\begin{center}
\begin{overpic}[width=.8\textwidth
  ]{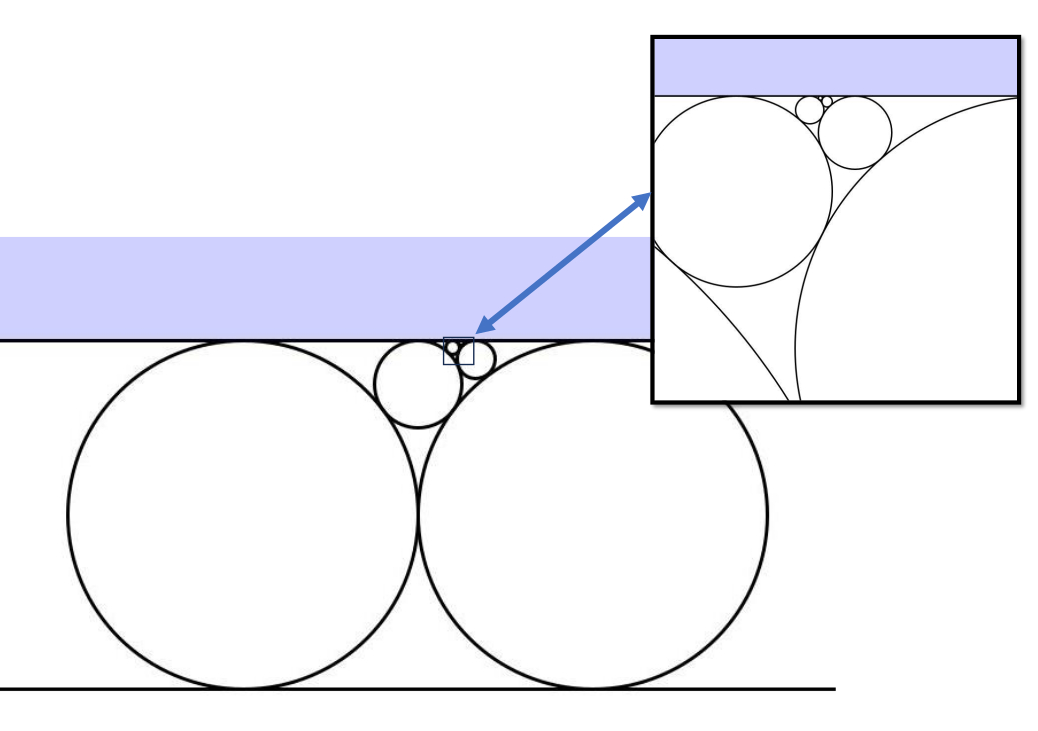}
  \put (5,42) {$C$}
  \put (5,0) {$c_0$}
  \put (11,16) {$c_1$}
  \put (37,32) {$c_2$}
  \put (24,39) {$0$}
  \put (57.5,39) {$2$}
  \put (43,52) {$t_{\infty}$}
  \linethickness{.6mm}
  \put (23.5,36) {\line(0,1){2.5}}
  \put (57.5,36) {\line(0,1){2.5}}
  \put (44,50) {\vector(0,-1){10}}
  \put (39.5,20) {$\bullet$}
\end{overpic}
\caption{Nested ``ring'' flowers}
\label{F:ring}
\end{center}
\end{figure}

\vspace{10pt} At any stage in this development, the current flower is
rife with extraneous tangencies. Indeed, at a given stage we have an
$n$-flower whose smallest interstice is formed by $C$ and its two
smallest petals. When we plug the new petal into that
interstice to form an $n+1$-flower, the tangency between those two
petals becomes extraneous.

The packing's features allow us to compute precise schwarzians.
\F{ring_focus} focuses in on the interstice where a new petal, the
blue one, is being added. The red and green are the smallest previous
petals. Reindexing to accommodate the new petal, we assume the green
circle is $c_{j-1}$, the red is $c_{j+1}$ and the new blue is
$c_j$. There are extraneous tangencies, but nonetheless, functionally
$c_{j+1}$ is degree~4, $c_{j-1}$ is degree~5, and of course $c_j$ is
degree~3. (In alternating stages, the green would be on the right and
the red on the left.)

\begin{figure}[h]
\begin{center}
\begin{overpic}[width=.8\textwidth
  ]{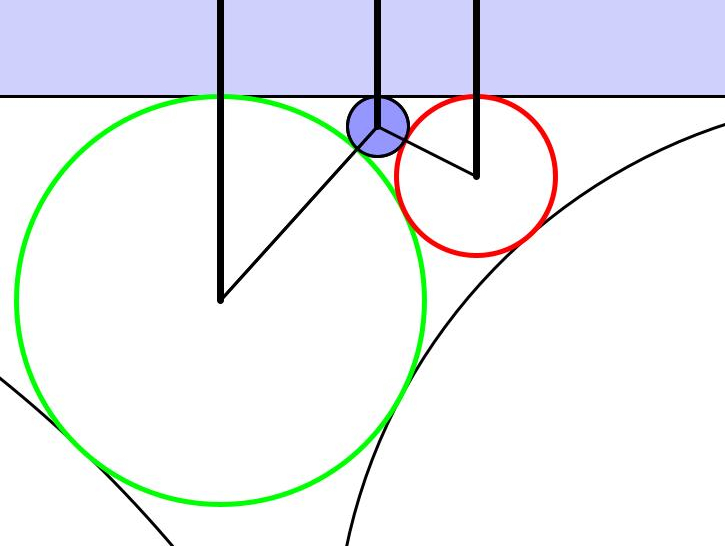}
  \put (5,68) {$C$}
  \put (32,70) {$e_{j-1}$}
  \put (54,70) {$e_{j}$}
  \put (67,70) {$e_{j+1}$}
  \put (60,55) {$e_r$}
  \put (37.5,47) {$e_g$}
  \put (50,39.5) {$e_b$}
  \put (23,33) {$c_{j-1}$}
  \put (68,50) {$c_{j+1}$}
  \linethickness{.65mm}
  \put (30,33.5) {\line(2.05,1.0){36}}
\end{overpic}
\caption{Inserting a new petal}
\label{F:ring_focus}
\end{center}
\end{figure}

\vspace{10pt} Our local goal is the schwarzians $s_{j-1},s_j,$ and $s_{j+1}$
for the vertical edges $e_{j-1},e_j,$ and $e_{j+1}$, though we need the
schwarzians for the edges $e_r,e_g,e_x$ along the way. We will work in
the $u$-variables.

The blue petal, $c_j$, is degree~3, so Lemma~\ref{L:3f} gives
$u_j=u_r=u_g=1/\sqrt{3}$. The red petal, $c_{j+1}$, is degree~4 and
has edges $e_r$ and $e_x$. Lemma~\ref{L:4f} implies that $u_ru_x=2/3$,
so knowing that $u_r=1/\sqrt{3}$, we conclude that $u_x=2/\sqrt{3}$.
Finally, note that the green petal, $c_{j-1}$, has degree~5 and
successive edges $u_x,u_g,u_{j-1}$. Knowing $u_x$ and $u_g$ implies
$u_{j-1}=\fU_5(u_x,u_g)=\sqrt{3}$.  Summarizing for the target schwarzians,
we conclude
\begin{align} \label{E:sss}
  &s_{j-1}=1-\sqrt{3}\sim -0.732051,\notag \\
  &s_j=1-1/\sqrt{3}\sim 0.422650,\\
  &s_{j+1}=1-2/\sqrt{3}\sim -0.154701.\notag
\end{align}

So, what do we conclude about the schwarzians of a full flower?
Observe that when a new petal is added in our construction, it
converts its smaller neighbor, degree~3 in the previous step, to
degree~4, while it converts its larger neighbor to degree~5. That
larger neighbor, $c_{j-1}$ in \F{ring_focus}, remains unchanged
thereafter, so its schwarzian remains at $1-\sqrt{3}$. On the other
hand, the schwarzian for the smaller neighbor, $c_{j+1}$, is only
temporary, as it will change with the next added petal. So here is the
typical sequence for a Ring Lemma $n$-flower, stated in the alternate
$u_{\cdot}$-variables:
\begin{equation} \notag\label{E:ring_u}
  \{\sqrt{3},\ \cdots\cdots,\ \sqrt{3},
  \ \frac{1}{\sqrt{3}},\ \frac{2}{\sqrt{3}},
  \ \sqrt{3},\ \cdots\cdots,\ \sqrt{3},\ \frac{1}{\sqrt{3}}\}.
\end{equation}
With every increase in $n$, the $2/\sqrt{3}$ will convert to
$\sqrt{3}$, the $1/\sqrt{3}$ will convert to $2/\sqrt{3}$, and a new
$1/\sqrt{3}$ will be inserted between them.

\vspace{10pt} Past experience with the Ring Lemma suggests that one
should not leave these flowers without looking around for interesting
numerical features.  In \cite{dA97} and \cite{AS97} the Fibonacci
sequence, the Descartes Circle Theorem, and the golden ratio all play
significant roles. In our normalized setting, we can add Farey numbers
to that list.

\vspace{10pt} So, let's look around! As visually suggested in
\F{ring_focus}, the local picture around a new petal has a static
asymptotic limit. We've seen that $u_j=1-s_j=1/\sqrt{3}$, so applying
(\ref{E:qr}) and adjusting the indexing we see this recurrence
relation among the radii:
\begin{equation} \notag
  \dfrac{1}{\sqrt{r_{j+1}}}=\dfrac{1}{\sqrt{r_j}}+\dfrac{1}{\sqrt{r_{j-1}}}.
\end{equation}
This is a generalized Fibonacci pattern and is precisely the
recurrence observed in \cite[\S4]{AS97}. As there, one can conclude
that
\begin{equation} \notag\label{E:ratio}
  \dfrac{r}{r'}\longrightarrow (\dfrac{1+\sqrt{5}}{2})^2=\tau^2,
\end{equation}
where $r'$ is the radius of a new petal, $r$ is the radius of the
previous new petal, and $\tau$ is the famous Golden Ratio.

How do Farey numbers enter the picture? {\bf Caution:} for this
discussion we must scale our normalized Ring Lemma flowers by
1/2. Thus the tangency points $t_j$ and radii $r_j$ are now scaled by
1/2, putting all the tangency points in $[0,1]$.

\vspace{5pt} One can deduce from the Descartes Circle Theorem, that if
a circle is placed in the interstice of three mutually tangent circles
whose bends (reciprocal radii in the terminology of F.~Soddy
\cite{fS36a}) are integers, then that circle's bend will also be an
integer. In our construction, we continually put new circles in
interstices. One can prove inductively that all radii (after our
scaling by 1/2) are reciprocal integers. From this one can conclude
that all the tangency points $t_j$ are rational numbers. Indeed, these
all fall into what's known as the ``Farey sequence'' in $[0,1]$ and
are subject to the counterintuitive Farey arithmetic. Consider a
tangency $t_j$ for a new circle in our construction, between the
tangency points $t$ and $t'$ for the previous two new circles. We may
write $t=a/b$ and $t'=a'/b'$ as rational numbers in lowest terms. From
the Descartes Circle Theorem, one can show that
\begin{equation} \label{E:tpat}
  t_j=\dfrac{a+a'}{b+b'}.
\end{equation}
To see the overall pattern of (rescaled) tangency points, we will
redefine the indexing as a sequence $\{t_j\}$. Here $t_0=0$, $t_1=1$,
and thereafter, let $t_j$ denote the tangency point of the next new
petal added, so $t_j$ always falls between $t_{j-1}$ and
$t_{j+1}$. (This indexing is not that used for individual
$n$-flowers.)  Now write $t_0=0/1$ and $t_1=1/1$ and then repeatedly
apply (\ref{E:tpat}). (There is one choice involved; after $t_2=1/2$
in \F{ring} we chose $t_3=\frac23$ rather than $t_3=\frac13$. ) Here,
then, are the first few values
\begin{equation} \notag
  \{\frac01,\ \frac11,\ \frac12,\ \frac23,\ \frac35,\ 
  \frac58,\ \frac8{13},\ \frac{13}{21},\ \frac{21}{34}\ \cdots\cdots\}.
\end{equation}
As one can see, $t_j=\mF_j/\mF_{j+1},j=1,\cdots$, where $\mF_j$ is the $j$th
Fibonacci number.  It is well known that this ratio converges to
$1/\tau$. In other words, the new petals in the infinite flower
suggested by \F{ring} converge to the point $t_{\infty}=2/\tau$.

In conclusion: Circles --- aren't they grand!

\appendix \label{S:appendix}
\section{}

Here we detail various computations with intrinsic schwarzians. We
start with four situations underlying our construction of normalized
flowers from given schwarzians. Next we compute the schwarzians for
uniform flowers. Finally we work out the relationship between a
Schwarzian derivative and the intrinsic schwarzians of domain and
range.

\subsection{Layout Computations} \label{SS:A1}
We work with flowers in their normalized positions; see \F{f7}) for
the notation. Note that $C$ and $c_0$ are tangent at infinity, so the
imaginary axis represents the edge $e_0$ between them, associated with
schwarzian $s_0$.

In computing the remaining petals, we encounter three situations, and
possibly a fourth if there is branching. In each there is an edge $e$
of interest connecting the upper half plane to a petal circle (the shaded one)
whose position has already been established.  There is also an
``initial'' neighboring petal (green) which is also in place. Our task
is to find data for the companion ``target'' petal (red), that which
is opposite to the initial petal across edge $e$. The shaded face $f$
is that formed by the central circle, the shaded circle, and the
initial circle. We are given the initial data for the two petals in place
and the schwarzian $s$ for $e$ and show the computations of data for
the target circle; in particular, we compute its tangency point $t$
and its radius $r$. The formulas we arrive at are easier to work with
if we introduce $u=1-s$ as an alternate to the variable $s$
itself. Situations~1-3 are illustrated as they would appear in
un-branched flowers. The computations, however, are fully general as
we discuss in Situation~4.

\vspace{10pt}
\noindent{\bf Situation 1.}
We begin with the edge $e=e_0$, connecting the two half planes as
illustrated in \F{sit1}. The petal $c_0$ (a half plane)
and the initial petal $c_1$ are in place as part of our
normalization. The target is the clockwise neighbor of $c_0$, namely,
the petal $c_{n-1}$. Being tangent to both half planes, its radius is
$r_{n-1}=1$ and we need only compute its tangency point $t_{n-1}$ from
the schwarzian $s_0$.

\begin{figure}
\begin{center}
\begin{overpic}[width=.6\textwidth
  ]{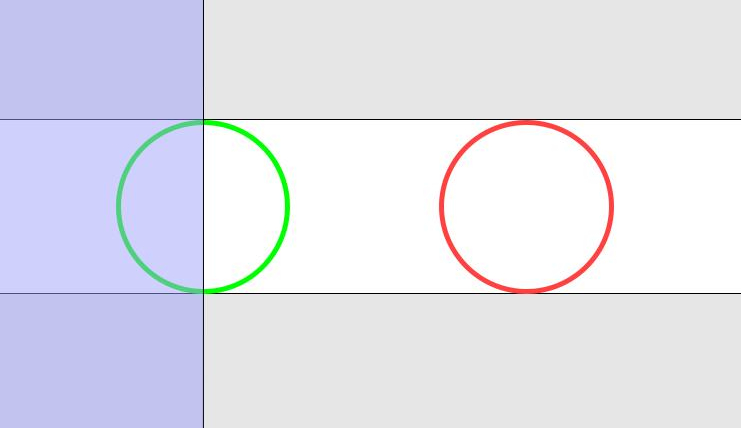}
\put (10,50) {$f$}
\put (50,50) {$C$}
\put (50,10) {$c_0$}
\put (24,43) {0}
\put (21,12) {$-2\,i$}
\put (26.25,40.5) {$\bullet$}
\put (26.25,17) {$\bullet$}
\put (39,22) {$c_1$}
\put (83,22) {$c_{n-1}$}
\put (72,43.5) {$t_{n-1}$}
\put (69.75,40.5) {$\bullet$}
\put (69.75,17) {$\bullet$}
\end{overpic}
\caption{Situation~1: layout the red circle}
\label{F:sit1}
\end{center}
\end{figure}

\vspace{10pt} Let $s=s_0$. The computations involve the M\"obius
transformation $M_s$ (see (\ref{E:mf})) and the M\"obius $m_f$ mapping
the points $\{1,\omega,\omega^2\}$ to $\{\infty,-2i,0\}$, and hence
mapping $C_v$ to the upper half plane.
\begin{align}\notag
m_f\circ M_s^{-1}=&  \begin{bmatrix}2i & {\ } & -\sqrt{3}+i\\
   {\ } & {\ }\\
    -1/2+\sqrt{3}/2\,i & {\ } &
    1/2-\sqrt{3}/2\,i
\end{bmatrix}
\begin{bmatrix}
  1-s & s \\ {\ } & {\ }\\-s & 1+s
\end{bmatrix} \\
\notag{\ } & \\
= &\begin{bmatrix}\notag \sqrt{3}s+(2-3s)\,i & {\ } &
  -\sqrt{3}(1+s)+(3s+1)\,i\\
   {\ } & {\ }\\
    -1/2+\sqrt{3}/2\,i & {\ } &
    1/2-\sqrt{3}/2\,i
  \end{bmatrix}
\end{align}

\vspace{10pt}
\noindent Applying this transformation to $C_b$ gives the normalized
petal $c_{n-1}$. In particular, applying it to the tangency point
$(5-\sqrt{3}\,i)/2$ in the base patch $\fp_{\Delta}$ yields the
normalize tangency point $t_{n-1}$, expressed using $u_0=1-s_0$ as
\begin{equation} \tag{S1}\label{E:S1}
  t_{n-1}=2\sqrt{3}\,u_0\qquad\text{ and }\qquad r_{n-1}=1.
\end{equation}

\vspace{10pt}
\noindent{\bf Situation 2.} 
We move now to the edge $e=e_1$ with the target being $c_2$. The
relevant schwarzian is $s=s_1$ and the initial petal is the half plane
$c_0$, green in \F{sit2}.

\begin{figure}[h]
\begin{center} 
\begin{overpic}[width=.6\textwidth
  ]{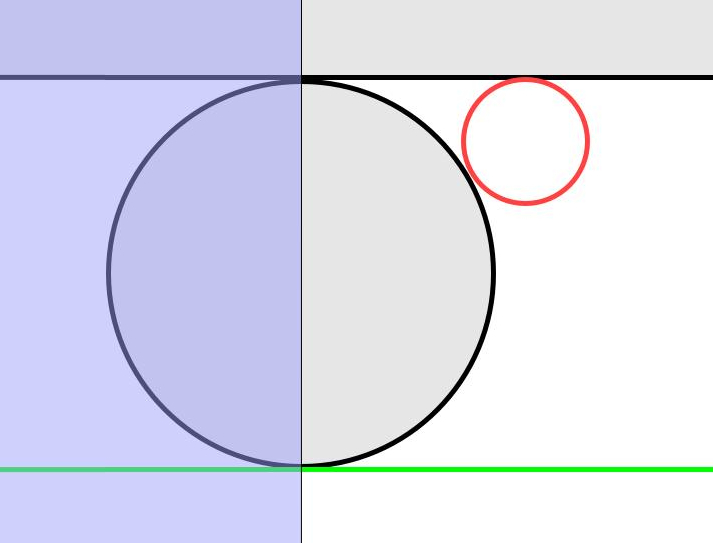}
\put (6,52) {$f$}
\put (5,71) {$C$}
\put (5,3) {$c_0$}
\put (39,68) {0}
\put (41,64) {$\bullet$}
\put (41,9.5) {$\bullet$}
\put (70,22) {$c_1$}
\put (82.5,50.5) {$c_2$}
\put (70,68.5) {$t_2$}
\put (72.25,64) {$\bullet$}
\end{overpic}
\caption{Situation~2: layout the red circle}
\label{F:sit2}
\end{center}
\end{figure}

\vspace{10pt}
We proceed by modifying the previous argument.  The shaded face $f$ is
the same, but the M\"obius $m_f$ must now map $\{1,\omega,\omega^2\}$
to $\{0,\infty,-2i\}$. We accomplish this by precomposing the earlier
$m_f$ with a rotation by $\omega^2$. The result is
\begin{align}\notag
  m_f\circ M_s^{-1}=&  \begin{bmatrix}\sqrt{3}-i & {\ }
    & -\sqrt{3}+i\\
    1 & {\ } &
    \dfrac{1}{2}-\dfrac{\sqrt{3}}{2}i
\end{bmatrix}
\begin{bmatrix}
  1-s & s \\ {\ } & {\ }\\-s & 1+s
\end{bmatrix} \\
\notag{\ } & \\
= &\begin{bmatrix}\notag \sqrt{3}-i & {\ } &
  -\sqrt{3}+i\\
  {\ } & {\ }\\
    1-3s/2 +(\sqrt{3}s/2)i & {\ } &
    (1+3s)/2-(\sqrt{3}(1+s)/2)i
  \end{bmatrix}
\end{align}

\vspace{10pt}
\noindent Applying this transformation to $C_b$ gives the normalized
petal $c_1$. Note that $m_f$ now maps $C_w$ to the upper half plane,
so applying the above M\"obius to the tangency point
$(5+\sqrt{3}\,i)/2$ in the base patch yields the displacement to the
normalized tangency point $t_2$; simple geometric computations give
the radius. We use the variable $u_1=1-s_1$.
\begin{equation} \tag{S2}\label{E:S2}
  t_2=2/(\sqrt{3}\,u_1)\qquad\text{ and }
  \qquad r_2=(t_2)^2/4=1/(\sqrt{3}\,u_1)^2.
\end{equation}

\vspace{10pt}
\noindent{\bf Situation 3.} 
We are left to treat the generic situation suggested by \F{sit3}. The
edge $e$ goes from the central circle $C$ to the shaded circle, with
its schwarzian $s$ and variable $u=1-s$.  (Note that the half plane
for $c_0$ is no longer necessarily involved.) We assume the shaded
circle has radius $R$, while the initial green circle has radius
$r$. It is convenient to position the shaded circle tangent to $C$ at
the origin, and then our goal is to compute the tangency point
$\delta$ (the displacement from 0) and the radius $\rho$ of the
red target circle.

\begin{figure}[h]
\begin{center}
\begin{overpic}[width=.6\textwidth
  ]{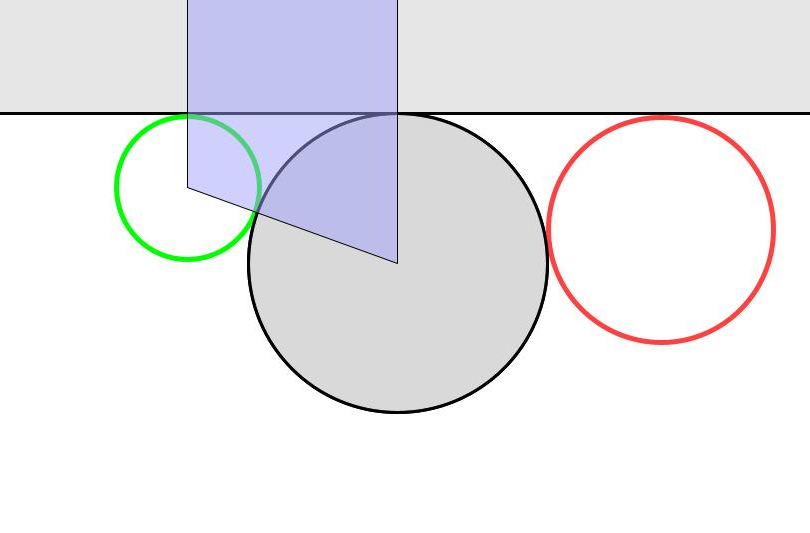}
\put (23,42.75){\vector(-2.5,-2.5){6.25}}
\put (49,33.25){\vector(2.5,-2.5){13.5}}
\put (80.75,37.25) {\vector(2.5,2.5){10.25}}
\put (18.5,40.75) {$r$}
\put (52,23) {$R$}
\put (87,39) {$\rho$}
\put (45,54) {0}
\put (34,58) {$f$}
\put (19.5,54) {$x$}
\put (79,54) {$\delta$}
\put (33,35) {$p$}
\put (22,50.5) {$\bullet$}
\put (47.75,50.5) {$\bullet$}
\put (80.5,50.5) {$\bullet$}
\put (30.5,38.5) {$\bullet$}
\end{overpic}
\caption{Situation~3: layout the red circle}
\label{F:sit3}
\end{center}
\end{figure}

Elementary geometric computations yield:
\begin{equation} \notag
  x=-2\sqrt{rR},\qquad\qquad
  p=-(\dfrac{2R}{R+r})(\sqrt{rR}+(r+R)\,i).
\end{equation}
The following M\"obius transformation $m$ will convert this generic
situation to Situation~2. Namely, $m$ maps $\{x,p,0\}$ to
$\{\infty,-2i,0\}$, so the configuration of \F{sit3} morphs into
that of \F{sit2}. 
  \begin{equation} \notag
    m=\begin{bmatrix}
    1+\sqrt{r/R\,\,}\,i & {\ } &0 \\
    {\ } & {\ } &\\
      (\sqrt{R/r\,\,}+i)/2 &{\ } &
      R+\sqrt{rR\,}\,i
    \end{bmatrix}
    \end{equation}
  The tangency point $t_2$ in \F{sit2} corresponds to the tangency
  point $\delta$ in \F{sit3}, so $\delta$ is obtained by applying
  $m^{-1}$ to $t_2$. An annoying calculation gives, in the alternate
  variable $u$,
  \begin{equation} \tag{S3}\label{E:S3}
    \delta(u,r,R)=\dfrac{2R}{(\sqrt{3}\,u-\sqrt{R/r\,\,})}
    \text{ and }\quad
    \rho=\dfrac{1}{(\sqrt{3}\,u/\sqrt{R}-1/\sqrt{r})^2}.
  \end{equation}

We will also need to reverse these computations in a particular
situation in order to compute $s$. The situation is this: the values
$r$ and $R$ are known, $\delta$ is positive, and the computed radius $\rho$
comes out to be $1$. What is $s$? We compute $u$, then $s=1-u$.
\begin{equation} \tag{R3}\label{E:R3}
  \text{When $R,r$ are known, $\delta>0$, and $\rho=1$: }\qquad
  u=\dfrac{\sqrt{R}+\sqrt{R/r}}{\sqrt{3}}.
\end{equation}

Situation~2 is the
limiting case of Situation~3 when $r$ grows to $\infty$, so
(\ref{E:S2}) follows from (\ref{E:S3}). Also, note that when applying
(\ref{E:S3}), the quantity $\delta$, which represents the displacement
of the target circle from its shaded neighbor, can be zero or
negative. An example is the branched flower of \F{four}(c): with
initial circle $c_2$, the displacement of the target $c_4$ from $c_3$
is negative. This puts us in the following branching situation.

\vspace{10pt}
\noindent{\bf Situation 4.} Branching is initiated during a layout
step if and only if (\ref{E:S3}) results in a displacement $\delta\le 0$.
\F{br1}(a) illustrates the most typical case, with
$\delta_j=(t_{j+1}-t_j)<0$. However, it is laying out the next circle
that we need to concentrate on, as shown in \F{br1}(b).  (The color
codings are as before; known green and shaded petals in place , a red
target petal to be positioned based on the schwarzian of the edge to
the shaded circle.)

\vspace{10pt}
\begin{figure}[h]
\begin{center}
\begin{overpic}[width=.9\textwidth
  ]{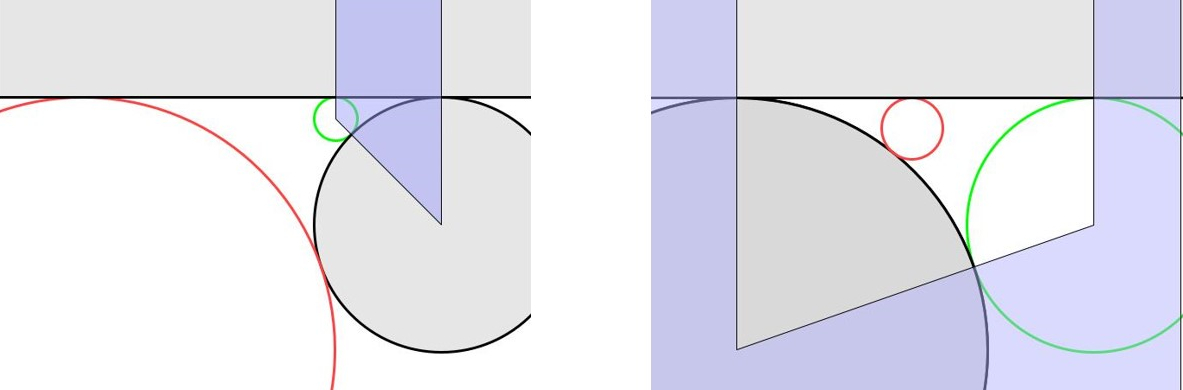}
  \put (32,29) {$f$}
  \put (95,4) {$f$}
  \put (32,14) {$c_j$}
  \put (88,14) {$c_j$}
  \put (24.5,22) {$r$}
  \put (28.3,23) {\vector(-1,-1.3) {1.42}}
  \put (78,22.2) {$\rho$}
  \put (77.15,22.4) {\vector(1,-1.3) {1.95}}
  \put (12,11.5) {$\rho$}
  \put (7.5,3) {\vector(1,1){15}}
  \put (68,11.5) {$R$}
  \put (62.5,3.5) {\vector(1,.8){16.2}}
  \put (35.8,8) {$R$}
  \put (37.4,13.8) {\vector(-1,-1.6){5.6}}
  \put (90,8) {$r$}
  \put (92.4,13.8) {\vector(-1,-1.6){5.6}}
  \put (7.45,24.5) {\line(0,1){5}}
  \put (37.1,24.5) {\line(0,1){5}}
  \put (37.2,24.5) {\line(0,1){5}}
  \put (37.3,24.5) {\line(0,1){5}}
  \put (37.4,24.5) {\line(0,1){5}}
  \put (37.2,27) {\vector(-1,0){30}}
  \put (19,28.5) {$\delta=\delta_{j}$}
  \put (62.3,24.5) {\line(0,1){5}}
  \put (62.2,24.5) {\line(0,1){5}}
  \put (62.4,24.5) {\line(0,1){5}}
  \put (77.2,24.5) {\line(0,1){5}}
  \put (62.2,27) {\vector(1,0){15.4}}
  \put (65,28.5) {$\delta=\delta_{j+1}$}
  \put (1,4) {$c_{j+1}$}
  \put (56,4) {$c_{j+1}$}
  \put (-5,30) {(a)}
  \put (50.5,30) {(b)}
\end{overpic}
\caption{Situation~4: layout the red circle}
\label{F:br1}
\end{center}
\end{figure}
By the formula in (\ref{E:S3}), when $\delta<0$, then
$(\sqrt{3}u/\sqrt{R}-1/\sqrt{r})<0$. This means, in turn, that our
previous expression $1/\sqrt{\rho}= (\sqrt{3}u/\sqrt{R}-1)$ is no
longer true, as it requires absolute values on the right hand
side. Subsequent formulas like those in (\ref{E:sqrs}) and
(\ref{E:qr}) fail, and ultimately, $\fU_n$ is no longer
represented in a closed formula. This is what makes branched flowers
more difficult to manipulate.

\F{br1}(b) is typical of what we refer to as Situation~4. Notice that
the new displacement, $\delta_{j+1}$, is again in the positive
direction. The computations require a modification of (\ref{E:S3}).

\begin{align} \label{E:S4}
  &\text{When the {\bf previous} displacement was negative, then
  (\ref{E:S3}) becomes}\notag\\
  &\qquad\delta(u,r,R)=\dfrac{2R}{(\sqrt{3}\,u+\sqrt{R/r\,\,})}
    \text{ and }\quad
    \rho=\dfrac{1}{(\sqrt{3}\,u/\sqrt{R}+1/\sqrt{r})^2}.\tag{S4}
\end{align}

\noindent
The ``previous'' step refers to that where $R$ was computed.  {\sl
  Apropros} to our earlier comments, the modification here is simply
replacing $\sqrt{R}$ by $-\sqrt{R}$ in (\ref{E:S3}).  (There is one
other detail: the standing assumption $u\ge0$ is also required
to ensure that this new displacement $\delta$ is positive).

Another possiblity leading to branching is pictured in \F{hp}.
Namely, when $(\sqrt{3}\,u-1/\sqrt{R/r})=0$ in (\ref{E:S3}),
so $\delta$ is undefined. In essence, $\delta=\infty$, $R=\infty$,
and the petal $c_{j+1}$ is a half plane (i.e., tangent to $C$ at $\infty$).
\F{hp} illustrates the situation when placing the next petal
$c_{j+2}$, which necessarily has the same radius $r_j$ as $c_{j}$.
For its tangency point, note that \F{hp} is a version of \F{sit1}.
Applying (\ref{E:S1}), scaling by $r_j$, and taking the order
$t_j,\infty,t_{j+2}$ of the tangencies about $C$ into account,
we have $t_{j+2}-t_j=-2\sqrt{3}u_{j+1}r_j$.

\begin{figure}[h]
\begin{center}
\begin{overpic}[width=.6\textwidth
  ]{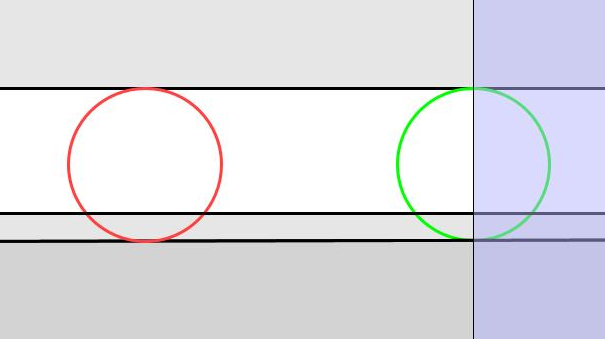}
\put (88,48) {$f$}
\put (50,50) {$C$}
\put (15,32) {$c_{j+2}$}
\put (70,32) {$c_{j}$}
\put (23.25,40.5) {$\bullet$}
\put (20,44.3) {$t_{j+2}$}
\put (50,7) {$c_{j+1}$}
\put (72.5,44.3) {$x=t_j$}
\put (77.4,40.5) {$\bullet$}
\put (77.4,15.2) {$\bullet$}
\put (74.5,12) {$p$}
\put (2,30) {$c_0$}
\put (3.65,28.3) {\vector(1,-2){3.5}}
\end{overpic}
\caption{Laying out the red circle when $c_j$ ia a half plane}
\label{F:hp}
\end{center}
\end{figure}

\vspace{10pt} We conclude this subsection by explaining the two
exceptions to successful layout as listed in Theorem~\ref{T:LP}. The
exceptional situations occur when $j+1=n-2$ in \F{br1} and
\F{hp}. Regarding exception (a), if \F{hp} occurs (so $c_{j+1}$ is the
penultimate petal $c_{n-2}$) then the Layout Process fails because
placing $c_{j+2}$ (i.e., $c_{n-1}$) with mandated radius 1 is either
impossible (if $r_{j}\not=1$) or ambiguous (since $u_{n-2}$ is
unknown). Regarding exception (b), look to \F{br1}(b). Though $c_1$ is
not pictured here, if the tangency point of the red circle, $t_{n-1}$,
is negative (to the left of $t_1=0$), then (\ref{E:S1}) implies $u_0$
is negative, that is $s_0>1$, which is not allowed.

\vspace{10pt}
\subsection{\bf Uniform Petals} \label{SS:A2}
The schwarzians for a uniform $n$-flower take a constant value that we
have labeled $\fs_{n}$. Here we show that 
\begin{equation}\label{E:S4}
  \fs_n=1-\dfrac{2\cos(\pi/n)}{\sqrt{3}},\ \ n\ge 3.
\end{equation}
We will base our computations on \F{unif_sch}, with $C$ the unit
circle and successive petals $c_{n-1},c_0,c_1$ sharing a common radius. Our
interest is in the schwarzian $s$ for the edge from $C$ to $c_0$.

\begin{figure}[h]
\begin{center}
\begin{overpic}[width=.6\textwidth
  ]{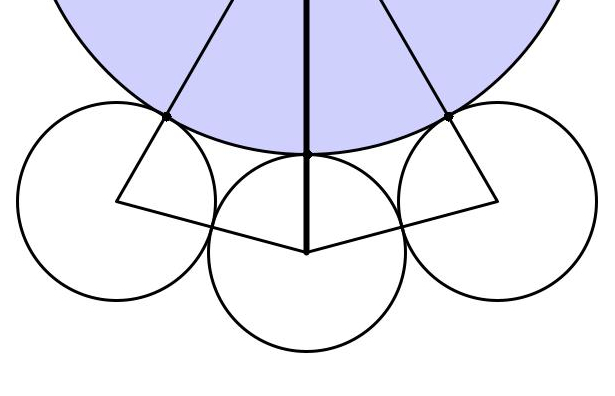}
\put (45.5,35.5) {-i}
\put (21,58) {$C$}
\put (10.5,22) {$c_{n-1}$}
\put (42,13.5) {$c_0$}
\put (73,22) {$c_1$}
\put (20.75,43) {$\zeta$}
\put (75,43) {$\xi$}
\linethickness{.5mm}
\put (56,64) {\line(1,-4){9}}
\put (64,26.5) {$\bullet$}
\put (48.5,38.5) {$\bullet$}
\put (71.75,44.5) {$\bullet$}
\put (26,44.5) {$\bullet$}
\put (64,48) {$f$}
\put (52.5,52) {$\alpha$}
\put (62,24) {$p$}
\end{overpic}
\caption{Uniform Petals: compute the schwarzian}
\label{F:unif_sch}
\end{center}
\end{figure}

\vspace{10pt} Let the angle $\alpha$ be one half of the angle at the
origin in face $f$ formed by the triple $\{C,c_0,c_1\}$.  If these
petals were taken from a uniform $n$-flower, then $\alpha=\pi/n$.
However, the following computation works for any $\alpha,
0<\alpha<\pi/2$. Note that the tangency points of the circles are
\begin{equation} \notag
  \xi=\sin(2\alpha)-i\,\cos(2\alpha)\ \ \ \text{ and }
  \ \ \ \zeta=-\sin(2\alpha)-i\,\cos(2\alpha).
\end{equation}
Let $T$ denote the M\"obius transformation which maps $\{-i,p,\xi\}$
to $\{\infty,-2i,0\}$, where $p$ is the tangency point between $c_0$
and $c_1$. This transformation puts the four circles in the standard
normalized positions as they appear in \F{sit1}. In particular,
$T(\zeta)$ is the tangency point labeled $t_{n-1}$ there. Applying
(\ref{E:S1}), we conclude that $1-s=T(\zeta)/(2\sqrt{3})$. I will
leave the computation of $T$ to the curious reader, but here's the
general result:
\begin{equation} \label{E:alpha}
  s=1-\dfrac{2\cos(\alpha)}{\sqrt{3}},\ 0<\alpha<\pi/2.
\end{equation}

\subsection{\bf Special Computations} \label{SS:A3}
We outline two computations referred to in \S\ref{S:class}. These are
similar in nature and, though elementary with a symbolic math package,
provide great fun via pencil-and-paper. Both involve the restriction
of a discrete mapping $F$ between circle packings to a domain patch
$\fp$ and its image patch $\fp'=F(\fp)$. The related objects involved
are the edges $e,e'$, their intrinsic schwarzians $s,s'$, their
tangency points $t,t'$, the face mappings $m_f:f\longrightarrow f'$,
$m_g:g\longrightarrow g'$, and the discrete Schwarzian derivative
$\sigma=\Sigma_F(e)$. Both situations also involve a M\"obius
transformation $m(z)=(az+b)/(cz+d)$; we write $m$ in matrix form
\begin{equation} \notag
  m=\begin{bmatrix} a & b\\ c & d \end{bmatrix},
  \ \ \text{ with }\ ad-bc=1.
\end{equation}

The first computation relates the Schwarzian derivative and
the two intrinsic schwarzians $s,s'$. The Schwarzian
derivative $\sigma$ arises in
\begin{equation}\notag
  m_g^{-1}\circ m_f=\bI+\sigma\begin{bmatrix}t & -t^2\\1 & -t\end{bmatrix}.
\end{equation}
For the intrinsic schwarzians we need to identify these additional
M\"obius transformations identifying faces:
\begin{align} \notag
  &\text{For }\fp=\{f\,|\,g\}\,\text{:}
    \qquad\quad \mu_f:f_{\Delta}\longrightarrow f;
    \ \ \ \ \mu_g:g_{\Delta}\longrightarrow g.\notag\\
  &\text{For }\fp'=\{f'\,|\,g'\}\,\text{:}
    \quad\quad \ \nu_{f}:f_{\Delta}\longrightarrow f';
    \ \ \ \ \nu_g:g_{\Delta}\longrightarrow g'.\notag
\end{align}

\vspace{10pt}
Manipulating the expression for schwarzians and taking $m=\mu_f$, we get
\begin{equation} \notag
    \nu_g^{-1}\circ \nu_f=\mu_g^{-1}\circ (m_g^{-1}\circ m_f)\circ m
    \ \ \text{ and }
\end{equation}
\begin{equation} \notag
    \mu_g^{-1}=\begin{bmatrix}1+s & -s \\ s & 1-s\end{bmatrix}
    \begin{bmatrix}d & -b\\-c & a\end{bmatrix}.
\end{equation}
Putting these into matrix form gives
\begin{equation} \notag
  \begin{bmatrix}1+s' & -s'\\ s' & 1-s'\end{bmatrix}=
    \begin{bmatrix}1+s & -s\\s & 1-s\end{bmatrix}
      \begin{bmatrix}d & -b\\-c & a\end{bmatrix}
        \begin{bmatrix}1+\sigma t & -\sigma t^2\\\sigma & 1-\sigma t\end{bmatrix}
          \begin{bmatrix}a & \ b\\c & d\end{bmatrix}.
\end{equation}
The many pleasant surprises in a pencil-and-paper simplification yields
\begin{equation} \notag
  \begin{bmatrix}1+s' & -s'\\ s' & 1-s'\end{bmatrix}=\bI+
    \begin{bmatrix} s+\sigma/(c+d)^2 & -(s+\sigma/(c+d)^2)\\
      s+\sigma/(c+d)^2 & -(s+\sigma/(c+d)^2)\end{bmatrix},
\end{equation}
implying $s'=s+\sigma/(c+d)^2$. Moreover, the expression on the right
is associated with the map of $\fp_{\Delta}\longrightarrow
\fp'$ and with the tangency point $\tau= 1$ in its domain. The
Schwarzian derivative $s'=s+\sigma/(c+d)^2$ may therefore be rewritten
\begin{equation}
	s'=s+\Sigma_F(e)\cdot m'(1).
\end{equation}
(As a side note, $\Sigma_F(e)\cdot m'(1)$ is real.)

\vspace{10pt} Schwarzian derivatives --- both classical and discrete
--- are unchanged under post-composition by M\"obius
transformations. Our second computation derives the chain rule for
discrete Schwarzian derivatives under pre-composition.  We will rely
on the notations above, except that $m$ now respresents an arbitrary
M\"obius transformation and the base patch $\fp_{\Delta}$ is replaced
by the patch $\fp''=m^{-1}(\fp)=\{f''\,|\,g''\}$ with its tangency
point denoted $\tau$.

We start with the function $F:\fp\longrightarrow \fp'$.  Its
Schwarzian derivative $\sigma=\Sigma_F(e)$ is derived from the
expression
\begin{equation} \notag
  m_g^{-1}\circ m_f=\bI+\sigma \begin{bmatrix}t & -t^2\\
    1 & -t\end{bmatrix} =\begin{bmatrix}1+\sigma t & 1-\sigma t^2\\
    \sigma & 1-\sigma t\end{bmatrix}.
\end{equation}

The issue is, given $m$, what is the Schwarzian derivative for $F\circ
m:\fp''\longrightarrow \fp'$, denoted $\Sigma_{F\circ m}(e'')$?  This is
derived from $\nu_g^{-1}\circ \nu_f$, involving the face maps
$\nu_f:f''\longrightarrow f'$ and $\nu_g:g''\longrightarrow g'$.  Note
that $\nu_f=m_f\circ m,\nu_g=m_g\circ m$. Therefore,
\begin{equation} \notag
  \nu_g^{-1}\circ \nu_f=(m_g\circ m)^{-1}\circ m_f\circ m
  = m^{-1}\circ (m_g^{-1}\circ m_f)\circ m.
\end{equation}
Manipulating this, we arrive at
\begin{align} \notag
\nu_g^{-1}\circ \nu_f&=m^{-1}\cdot\bigl[\bI+\sigma
  \begin{bmatrix} t & -t^2 \\ 1 & -t \end{bmatrix} \bigr]\cdot m\notag \\
&=\bI+\sigma\begin{bmatrix} d & -b \\ -c & a \end{bmatrix}
\begin{bmatrix} t & -t^2 \\ 1 & -t \end{bmatrix}
  \begin{bmatrix} a & b \\ c & d \end{bmatrix}.\notag
\end{align}
Since $m$ identifies $e''$ with $e$, we have $m(\tau)=t$. Using this
to replace $t$ and enjoying further pencil-and-paper work, one arrives
at
\begin{equation} \label{E:almost}
  \nu_g^{-1}\circ \nu_f=\bI+\dfrac{\sigma}{(c\tau+d)^2}
    \begin{bmatrix}\tau & -\tau^2\\ 1 & -\tau\end{bmatrix}.
\end{equation}
This gives our discrete chain rule, which is placed here beside the
classical version:
\begin{align} \label{E:SSigma}
  &\Sigma_{F\circ m}(e'')=\sigma/(c\tau+d)^2=
  \Sigma_F(m(e''))\cdot m'(\tau)\\
  &S_{\phi\circ m}(z)=S_{\phi}(m(z))/(cz+d)^4=
  S_{\phi}(m(z))\cdot (m'(z))^2.\notag
\end{align}
These diverge in that the discrete version involves $m'$ rather than
$(m')^2$. The author has no concrete explanation for this
difference. It is perhaps worth noting, however, that for mappings
between circle packings, the ratios of image radii to domain radii
serves as a proxy for the absolute value of the classical derivative;
see, for example, \cite{CDS03}. In some sense, these mappings already
incorporate a derivative, and this may subtly influence this chain
rule.


\bibliographystyle{amsplain}
\bibliography{SchwarzSubmission}

\end{document}